\documentclass[reqno]{amsart}            
\usepackage{amsthm,amssymb,amsfonts,graphicx,cite,color}
\usepackage[bookmarks,bookmarksnumbered,bookmarksopen,colorlinks,backref,linkcolor=blue,citecolor=red]{hyperref}%
\usepackage{xcolor}
\usepackage{graphicx}
\usepackage{mathrsfs}


\vfuzz2pt 
\newtheorem{theorem}{Theorem}[section]
\newtheorem{definition}{Definition}[section]
\newtheorem{lemma}{Lemma}[section]
\newtheorem{corollary}{Corollary}[section]
\newtheorem{remark}{Remark}[section]
\newtheorem{proposition}{Proposition}[section]
\numberwithin{equation}{section}

\textwidth=16.5cm
\oddsidemargin=4mm
\evensidemargin=4mm
\mathsurround=2pt
\begin{document}
\title[Spectral problem in a thin graph-like junction
with concentrated mass in the node]
{Asymptotic approximations for eigenvalues and eigenfunctions  of a spectral problem in a thin graph-like junction
with a concentrated mass in the node}
\author[T.A.~Mel'nyk]{Taras A. Mel'nyk
}
\address{\hskip-12pt  Faculty of Mathematics and Mechanics, Department of Mathematical Physics\\
Taras Shevchenko National University of Kyiv\\
Volodymyrska str. 64,\ 01601 Kyiv,  \ Ukraine
}
\email{melnyk@imath.kiev.ua}

\begin{abstract}
A spectral problem  is considered  in a thin $3D$ graph-like junction that consists of three  thin curvilinear cylinders that are joined through a domain (node) of the diameter $\mathcal{O}(\varepsilon),$ where $\varepsilon$ is a small parameter.
A concentrated mass with the density $\varepsilon^{-\alpha}$ $(\alpha \ge 0)$ is located in the node.
The asymptotic behaviour of  the eigenvalues and eigenfunctions
is studied as $\varepsilon \to 0,$ i.e. when the thin junction is shrunk into a graph.

There are five qualitatively different cases in the asymptotic behaviour $(\varepsilon \to 0)$ of the eigenelements  depending on the
 value of the parameter $\alpha.$ In this paper three cases are considered, namely, $\alpha =0,$ \   $\alpha \in (0, 1),$ and $\alpha =1.$

Using multiscale analysis, asymptotic approximations for eigenvalues and eigenfunctions are constructed and justified
with a predetermined accuracy with respect to the degree of $\varepsilon.$ For irrational  $\alpha \in (0, 1),$ a new kind of asymptotic expansions is introduced. These approximations show how to account the influence of local geometric inhomogeneity of the node and the concentrated mass in the corresponding limit spectral problems on the graph for different values of the parameter $\alpha.$
\end{abstract}

\keywords{perturbed spectral problem, thin graph-like junction, concentrated mass, asymptotic behaviour
\\
\hspace*{9pt} {\it MOS subject classification:} \
35B25, 47A75, 35B40, 35C20,   74K30
}

\maketitle
\tableofcontents

\section{Introduction}\label{Sect1}

In recent years, there has been an increasing interest in the study of spectral problems in thin graph-like structures, since such problems have various  applications in many fields, e.g., in quantum physics,  mathematical biology, chemistry and many others (see, e.g. \cite{Post-2012}). The main task is to study the possibility of approximating the spectra of different operators by the spectra of appropriate operators on the corresponding graph.
The convergence of  spectra for the Laplacians with different boundary conditions (Neumann, Dirichlet and Robin) at various levels of generality was proved in many papers. In order not to repeat myself, I refer readers to the works \cite{Kuchment2002,Post-2012}, where
fairly complete reviews on this topic has been presented. Here I mention several new papers that have appeared recently. Interesting multifarious transmission conditions were obtained in the limit passage for spectral problems on thin periodic honeycomb lattice
\cite{Naz-Ruo-Uus-2016}. In \cite{Antonio_Perez-2019} the authors studied the asymptotic behavior the high frequencies for the Laplace operator in a thin T-like shaped structure and gave a characterization of the asymptotic form of the
eigenfunctions originating these vibrations. About the study of  boundary-value problems in thin graph-like junctions  in other contexts, we refer to last new papers  \cite{Mel_Klev_AA-2019,Bun_Gau_Leo_2019,GauPanPia2016,P-P-Stokes-1-2015,P-P-Stokes-2-2015} and references there.

The book by O. Post \cite[Introduction]{Post-2012} raised a number of  questions.  Here are some of them.  If the diameter of the  nodal domain  in a thin graph-like junction  is  of the order $\mathcal{O}(\varepsilon)$, the node is not seen in the limit.
\begin{itemize}
  \item
How then to determine the influence of the  nodal domain  on the asymptotic behavior of the spectrum in this case?
  \item
Does the one-dimensional model yield correct information about  the original physical system?
 \end{itemize}
In the present paper these questions are  answered (see the conclusion in  Remark~\ref{remark-answers}).

The main goal of this paper is the study of the impact of the heavy  nodal domain in a thin graph-like junction on the asymptotic behaviour of the eigenvalues and eigenfunctions when this thin graph-like junction is shrunk into a graph.   To my knowledge, this is the first  article in the literature concerning the asymptotics of eigenvibrations of a thin graph-like junction with a  concentrated mass in the node. For this, the asymptotic approach developed in \cite{Mel_Klev_AA-2016,Mel_Klev_M2AS-2018,Mel_Klev_AA-2019} is used.
This approach makes it possible to take into account various factors (e.g. variable thickness of thin curvilinear cylinders,  inhomogeneous nonlinear boundary conditions, geometric characteristics of nodal domains and physical processes inside, etc.) in statements of  problems on graphs. In addition, it gives the better estimate for the difference between the solution of the starting  problem and the solution of the corresponding limit problem (see \cite{Mel_Klev_AA-2016}).

Qualitatively different cases in the asymptotic behaviour of the eigenelements  are discovered depending on the parameter $\alpha$ characterizing the value of the concentrated mass.
The main novelty is the construction of complete asymptotic expansions for the eigenvalues and asymptotic approximations for eigenfunctions with a predetermined accuracy. For irrational  $\alpha \in (0, 1),$ a new kind of asymptotic expansions is introduced.

\subsection{Statement of the problem}
The model thin star-shaped junction  $\Omega_\varepsilon$  consists of three thin curvilinear cylinders
$$
\Omega_\varepsilon^{(i)} =
  \Bigg\{
  x=(x_1, x_2, x_3)\in\Bbb{R}^3 \ : \
  \varepsilon \ell_0<x_i<\ell_i, \quad
  \sum\limits_{j=1}^3 (1-\delta_{ij})x_j^2<\varepsilon^2 h_i^2(x_i)
  \Bigg\}, \quad i=1,2,3,
$$
that are joined through a  nodal domain $\Omega_\varepsilon^{(0)}$ (referred in the sequel "node").
Here $\varepsilon$ is a small parameter; $\ell_0\in(0, \frac13), \ \ell_i\geq1, \ i=1,2,3;$
 the positive function $h_i$ belongs to the space $C^1 ([0, \ell_i])$
and it is equal to some constants in  neighborhoods of  $x_i=0$ and $x_i=\ell_i;$
the symbol $\delta_{ij}$ is the Kroneker delta, i.e.,
$\delta_{ii} = 1$ and $\delta_{ij} = 0$ if $i \neq j.$
The node $\Omega_\varepsilon^{(0)}$ (see Fig.~\ref{f2}) is formed by the homothetic transformation with coefficient $\varepsilon$ from a bounded domain
$\Xi^{(0)}\subset \Bbb R^3$,  i.e.,
$
\Omega_\varepsilon^{(0)} = \varepsilon\, \Xi^{(0)}.
$
In addition, we assume that its boundary contains the disks $\Upsilon_\varepsilon^{(i)} (\varepsilon\ell_0), \ i=1,2,3,$ where
$$
\Upsilon_\varepsilon^{(i)} (s) :=
\Big\{
 x\in\Bbb{R}^3 \, : \ x_i= s, \quad
 \sum\limits_{j=1}^3 (1-\delta_{ij})x_j^2<\varepsilon^2 h_i^2(s)
\Big\}.
$$
\begin{figure}[htbp]
\begin{center}
\includegraphics[width=6cm]{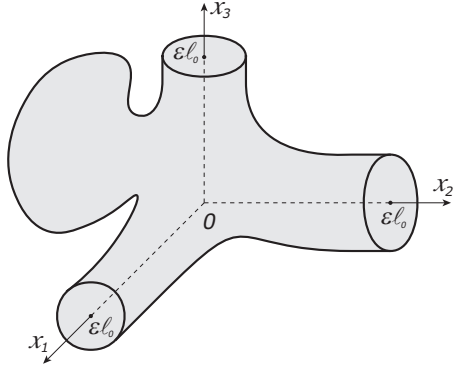}
\end{center}
\caption{The node $\Omega_\varepsilon^{(0)}$}\label{f2}
\end{figure}

Thus, the model thin graph-like junction  $\Omega_\varepsilon$  (see Fig.~\ref{f3})
is   the interior of the union
$
\bigcup_{i=0}^{3}\overline{\Omega_\varepsilon^{(i)}}
$
and we assume that it has the Lipschitz boundary.

\begin{remark}
  Thin graph-like junctions with arbitrary  number  and arbitrary  orientation of the thin cylinders can be also considered (see \cite{Klev_2019}, where a parabolic problem in a thin graph-like junction was studied). But in order to avoid technical and huge calculations, the thin junction $\Omega_\varepsilon,$ in which the cylinders are located along the coordinate axes,  is proposed.
\end{remark}

\begin{figure}[htbp]
\begin{center}
\includegraphics[width=8cm]{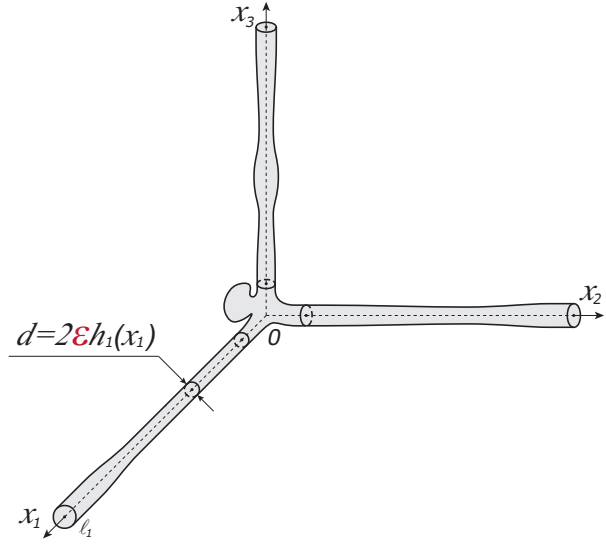}
\end{center}
\caption{The model thin graph-like  junction $\Omega_\varepsilon$}\label{f3}
\end{figure}

In $\Omega_\varepsilon$ we consider the following spectral problem:
\begin{equation}\label{1.1}
\left\{
\begin{array}{rcll}
   -\Delta_x  u^\varepsilon(x)& = & \lambda(\varepsilon) \,
 \rho_{\varepsilon}(x)\,  u^\varepsilon(x), &
                         \quad  x \in \Omega_{\varepsilon};
\\[2mm]
u^\varepsilon(x)& = & 0, & \quad   x \in {\Upsilon_{\varepsilon}^{(i)}(\ell_i)},  \ \  i=1,2,3;
\\[2mm]
 \partial_{\nu} u^\varepsilon(x) & = & 0 , & \quad x \in
\partial\Omega_\varepsilon \setminus \Big(\Upsilon_{\varepsilon}^{(1)}(\ell_1) \cup \Upsilon_{\varepsilon}^{(2)}(\ell_2)
\cup \Upsilon_{\varepsilon}^{(3)}(\ell_3)\Big);
\\[2mm]
\left[u^\varepsilon \right]_{|_{x_i=\varepsilon \ell_0}} &= & \left[\partial_{x_i}u^\varepsilon
\right]_{|_{x_i=\varepsilon \ell_0}}=0 , & \quad  x \in \Upsilon_\varepsilon^{(i)} (\varepsilon\ell_0), \ \ i=1,2,3.
\end{array}
\right.
\end{equation}
Here $\partial_{\nu}=\partial /\partial\nu$ is the outward normal
derivative; \ the brackets denote the jump of the enclosed
quantities;  \ the density
\begin{equation*}
\rho_{\varepsilon}(x)=
\left\{%
\begin{array}{ll}
    1, & x\in \Omega_{\varepsilon} \setminus \Omega_\varepsilon^{(0)},
\\[2mm]
\varepsilon^{-\alpha}  \varrho_0\big(\tfrac{x}{\varepsilon}\big), & x\in \Omega_\varepsilon^{(0)},
\end{array}%
\right.
\end{equation*}
where the parameter $\alpha\in [0, +\infty)$ (if
$\alpha > 0,$ then the concentrated mass is presented in the node  $\Omega_\varepsilon^{(0)}),$
 $\varrho_0$ is a smooth function in $\overline{\Xi^{(0)}}$ and such that
$$
\varrho_0(\xi) =
\left\{%
\begin{array}{ll}
    0, & \ \xi=(\xi_1, \xi_2, \xi_3) \in \Bbb R^3_{\xi}\setminus \Xi^{(0)},
\\[2mm]
0 < c_0 \le \varrho_0\big(\xi \big) \le c_1, & \  \xi \in \Xi^{(0)}.
\end{array}%
\right.
$$

Recall that  $u^\varepsilon \in\mathcal{H}_{\varepsilon} \setminus \{0\}$ is  an eigenfunction corresponding to the eigenvalue
$\lambda(\varepsilon)$ of  problem \eqref{1.1} if
\begin{equation}\label{identity}
  \langle u^\varepsilon , v \rangle_\varepsilon = \lambda(\varepsilon) \,
\left( u^\varepsilon , v \right)_{\mathcal{ V}_\varepsilon}
\qquad \forall \,   v \in \mathcal{H}_{\varepsilon},
\end{equation}
where $\mathcal{H}_{\varepsilon}$ is the Sobolev space
$$
\left\{ u\in H^{1}(\Omega_\varepsilon): \quad u|_{{\Upsilon_{\varepsilon}^{(i)}(\ell_i)}}=0 \ \ \text{ in sense of the trace},  \ \  i=1,2,3, \right\}
$$
with the scalar product
\begin{equation}\label{1.4}
\langle u, v \rangle_\varepsilon
 := \int_{\Omega_\varepsilon} \nabla u\cdot\nabla v\, dx \quad \forall \,  u, v \in \mathcal{H}_{\varepsilon},
\end{equation}
 and  $\mathcal{ V}_{\varepsilon}$ is the space $L^{2}(\Omega_\varepsilon)$
with the scalar product
$$
\left( u, v \right)_{\mathcal{ V}_\varepsilon}:= \sum_{i=1}^{3} \int_{\Omega^{(i)}_\varepsilon}  u \,  v \, dx + \varepsilon^{-\alpha}  \int_{\Omega^{(0)}_\varepsilon}  \varrho_0\big(\tfrac{x}{\varepsilon}\big)\, u \,  v \, dx.
$$

Let us define an operator $ A_{\varepsilon}: \mathcal{H}_{\varepsilon}\mapsto
\mathcal{H}_{\varepsilon}$  by the following equality
\begin{equation}\label{operator1}
\langle  A_{\varepsilon} u , v \rangle_\varepsilon = \left( u, v \right)_{\mathcal{ V}_\varepsilon}
 \quad \forall \ u, v \in \mathcal{ H}_{\varepsilon},
\end{equation}
It is easy to see that  operator $A_\varepsilon$ is self-adjoint, positive, and  compact due to  compactness of the imbedding $H^1(\Omega_\varepsilon) \subset L^2(\Omega_\varepsilon).$
Thus, the problem (\ref{1.1}) is equivalent to the spectral problem
$A_{\varepsilon} u = \lambda^{-1}(\varepsilon) \, u$ in $\mathcal{ H}_{\varepsilon}.$

Therefore,   for each fixed value of $\varepsilon$ there is a
sequence of eigenvalues
\begin{equation}
0  < \lambda_1(\varepsilon) < \lambda_2(\varepsilon) \le \ldots
\le \lambda_n(\varepsilon) \le \dots \to + \infty \quad ({\rm as}
\quad
  n \to +\infty) \label{1.2}
\end{equation}
of problem \eqref{1.1}, where each eigenvalue is counted as many times as its multiplicity.
The corresponding eigenfunctions $ \{u_n^\varepsilon\}_{n \in \Bbb N},$ which belong to $\mathcal{H}_{\varepsilon},$ can be
orthonormalized as follows
\begin{equation}\label{normalized}
\langle u_n^\varepsilon , u_m^\varepsilon \rangle_\varepsilon  = \varepsilon^2
  \delta_{n,m} , \quad \{n,\, m\} \subset \Bbb N.
\end{equation}
In addition, since the eigenfunctions are  defined up to the sign "$\pm$", we can choose them more precisely, namely
\begin{equation}\label{normalized+}
 \int_{\Upsilon^{(i)}_\varepsilon(\ell_1)}
  \big(\partial_{x_1} u_n^\varepsilon\big)(\ell_1,x_2, x_3)\, dx_2dx_3 > 0 .
\end{equation}

\medskip

{\bf The  aim} is to study the asymptotic behavior of the eigenvalues
$\{\lambda_n(\varepsilon)\}_{n\in \Bbb N}$ and the eigenfunctions
$\{ u_n^\varepsilon\}_{n\in \Bbb N}$ as $ \varepsilon \to
0,$ i.e., when the thin junction $\Omega_\varepsilon$ is  shrunk into the graph
$$
\mathcal{I} := \overline{I_1}\cup \overline{I_2}\cup \overline{I_3},
$$
where ${I}_i:=\{x=(x_1, x_2, x_3) \in \Bbb R^3: \ x_i\in (0, \ell_i), \ \overline{x}_i=(0,0)\}, \ i\in \{1, 2, 3\},$ and
$$
\overline{x}_i =
\left\{\begin{array}{lr}
(x_2, x_3), & i=1, \\
(x_1, x_3), & i=2, \\
(x_1, x_2), & i=3.
\end{array}\right.
$$
It should be noted that the limit process is accompanied by a
concentrated mass in the node. In fact, there are two kinds of perturbations in the problem (\ref{1.1}), namely, the
domain perturbation and the density perturbation. The influence of both those factors on the asymptotic behavior of the eigenvalues and eigenfunctions will be studied as well.

\subsection{Comments to the problem}

Vibrating systems with a concentration of masses have been studied for a long time.
It was experimentally established  that the concentrated  mass on a small set of
diameter $\mathcal{O}(\varepsilon)$ in a vibration system leads to the big reduction of the main frequency and to the
localization of corresponding eigenvibration near the concentrated  mass. A new approach in this field was
proposed  by E. S\'anchez-Palencia in the paper \cite{S-P}, where
the effect of local vibrations was mathematically described. After
this paper, many articles appeared dealing with different spectral problems
with concentrated masses. The reader can find widely presented bibliography on this topic  in  \cite{OYS,M3AS1993,CRAS1999,M3AS,Perez_Review,IZV,MMO,Perez-Nazarov, Mel_Che_2014}.
As follows from results obtained in those papers, the asymptotic behaviour of the eigenvalues and eigenfunctions  essentially
depends on the parameter $\alpha.$

Therefore, it is natural to expect that the concentrated mass in  the node  provokes crucial changes in the whole vibrating process in the thin graph-like junction $\Omega_\varepsilon,$ in particular, as we will see later,  it  rejects the traditional Kirchhoff transmission conditions at the vertex   of the graph $\mathcal{I}$ in the limit (as $\varepsilon \to 0)$ for some values of $\alpha.$

Since spectral problems in thin graph-liked junctions naturally  associated with spectral problems on graphs, we note some papers devoted to the study of spectral problems with concentrated masses on graphs.
The first work in this direction was the paper \cite{Oleinik_1988} by Oleinik, in which a spectral problem
$$
\frac{d^2 u^\varepsilon(x_1)}{dx^2_1}  +  \lambda(\varepsilon) \, \rho_{\varepsilon}(x_1)\,  u^\varepsilon(x_1) =0
                         \quad  \text{in} \ (a, b), \qquad u^\varepsilon(a) =u^\varepsilon(b) = 0,
$$
 where $a <0$ and $b>0,$ was considered with a concentrated mass on the interval $(-\varepsilon, \varepsilon).$
Five  qualitatively different cases in the asymptotic behaviour of the eigenvalues and eigenfunctions were  discovered, namely, $\alpha < 1,$ $\alpha = 1,$ $\alpha \in (1, 2),$ $\alpha =2,$ and $\alpha > 2.$ In each case the convergence of the eigenvalues and eigenfunctions was proved as $\varepsilon \to 0.$ Asymptotic expansions for the eigenvalues and eigenfunctions of the same problem were constructed in \cite{Gol_Naz_Ole_Sob_1989} for the integer values of $\alpha$ and $\alpha= \frac{3}{2}.$ The convergence theorems for eigenvalues and eigenfunctions of the fourth order differential equation on the interval $(a, b)$ with a concentrated mass on $(-\varepsilon, \varepsilon)$ were proved in \cite{Gol_1992}. Asymptotic behavior of the spectrum of the Sturm-Liouville problem  on metric  graphs  with perturbed density in small neighborhoods of vertices was studied in \cite{Gol_Gra_07} for
$\alpha < 1,$ $\alpha = 1,$ and $\alpha \in (1, 2).$

Similar situation is observed for the problem \eqref{1.1}.
In the present paper, only the cases $\alpha = 0,$ $\alpha \in (0, 1)$ and $\alpha = 1$ are considered.

In order to partially satisfy readers' desire to find out answers to the questions that are asked at the beginning of this section, I note  that this requires finding the second terms of the asymptotics (the full answers in Remark~\ref{remark-answers}). For example,  two-term asymptotics for the eigenvalue $\lambda_n(\varepsilon)$ in the case $\alpha = 0$ is
\begin{equation*}
\lambda_n(\varepsilon) = \Lambda_n + \varepsilon \mu_{1, n} + \mathcal{O}(\varepsilon^2)
\quad \text{as} \quad \varepsilon \to 0,
\end{equation*}
where
\begin{equation}\label{second_term}
 \mu_{1,n}   =  \left(\Lambda_n \, W_{n}^{(1)}(0)\right)^2  \left(  \sum\limits_{i=1}^3  \ell_0 h_i^2 (0)
     -  \frac{1}{\pi} \int\limits_{\Xi^{(0)}} \varrho_0(\xi)  \, d\xi \right)
 + \left(\dfrac{\boldsymbol{\delta_{1,n}^{(2)}}\, h^2_2(0)}{\ell_2}
 + \dfrac{ \boldsymbol{\delta_{1,n}^{(3)}} \, h^2_3(0)}{\ell_3}\right) \,   \Lambda_n W_{n}^{(1)}(0) .
\end{equation}
Here, $\Lambda_n$ is the $n$-th eigenvalue of the limit spectral problem \eqref{main} and
$\{W^{(i)}_n\}_{i=1}^3$ is the corresponding eigenfunction. From \eqref{second_term} it is possible to see the influence of the node and other characteristics of the thin junction $\Omega_\varepsilon$ on the asymptotic behavior of the spectrum of the problem \eqref{1.1} even for $\alpha = 0.$

The concentrated mass begins to impact the first terms of the asymptotic expansions if  $\alpha=1.$  This influence appears through the second Kirchhoff transmission condition in the limit spectral problem, namely,
$$
\sum_{i=1}^3  h_i^2 (0) \dfrac{d W^{(i)}}{dx_i} (0)  =
    -    \, \displaystyle{ \Lambda \, W^{(1)}(0) \, \frac{1}{\pi} \, \int_{\Xi^{(0)}} \varrho_0(\xi) \, d\xi}.
$$
It should be noted here that the spectral parameter $\Lambda$ is also present in the differential equations of the limit problem.

In contrast to asymptotic expansions constructed in \cite{Gol_Naz_Ole_Sob_1989},
the asymptotic approximations for the eigenfunctions of the problem \eqref{1.1}
consist of two parts, namely, the regular part of the asymptotics  located inside of each thin cylinder and the inner part presented in a neighborhood of the node (additional boundary-layer parts are needed in some cases  (see Sec.~\ref{Sec_3})).
Terms of the inner part of the asymptotics are special solutions of boundary-value problems in an unbounded domain with different outlets at infinity. It turns out they have polynomial growth at infinity and directly impact the regular asymptotics
through the matching conditions.  As a result, we derive the limit problem  $(\varepsilon =0)$ on the graph and the corresponding coupling conditions in the vertex, as well as the problems for the following terms of the regular asymptotics.

Another significant feature of this work is a new type of asymptotic expansions
in the case  when $\alpha$ is an irrational number from the interval  $(0, 1).$
For instance,  the following asymptotic ansatz
$$
   \sum\limits_{k=0}^{+\infty} \, \sum\limits_{p=0}^{+\infty} \, \varepsilon^{k - p\alpha } \, \mu_{k - p\alpha }
$$
is suggested for an eigenvalue $\lambda(\varepsilon)$ of the problem \eqref{1.1}.
This is a non-standard asymptotic expansion, since the better accuracy of the approximation we want, the more terms between integer powers of $\varepsilon$ must be determined, and with each step more and more (for more detail see Sec.~\ref{Sec_4}).
In classical asymptotic analysis, in order to construct the asymptotic approximation for a certain function, we successively calculate the coefficients (the next coefficient is determined through the previous ones) relative to a given scale (see e.g. \cite{Boyd,Miller}).

\smallskip

The rest of this paper is organized as follows. In Sec.~\ref{Sec_2}, some inequalities used in the other sections are proved.
Asymptotic approximations for the eigenvalues and eigenfunctions  of the problem \eqref{1.1} for $\alpha=0,$ $\alpha \in (0, 1),$ and $\alpha=1$ are constructed and justified in Sections~\ref{Sec_3}, \ref{Sec_4}, and \ref{alfa=1}, respectively.

\section{Additional useful inequalities and statements}\label{Sec_2}

Taking the uniform Dirichlet conditions on $\Upsilon_\varepsilon^{(1)}(\ell_1)$ into account  and using the linear extension operator $\mathcal{P}_{\varepsilon}$ (see \cite[Lemma 2.1]{ZAA99}) from the space $H^1(\Omega^{(1)}_\varepsilon)$ into
$H^1(B^{(1)}_{\varepsilon}),$ where
$$
B^{(1)}_{\varepsilon} := \{x:  \  x_1 \in (\varepsilon \ell_0, \ell_1), \   x_2 \in (-\varepsilon R_0, \varepsilon R_0), \ x_3 \in (-\varepsilon R_0, \varepsilon R_0)\}
$$
is a thin beam containing the thin curvilinear cylinder $\Omega^{(1)}_\varepsilon$ with its closure, we deduce the inequality
\begin{equation}\label{est1}
\int_{\Omega_\varepsilon^{(1)}} v^2 \, dx \le \int_{B_\varepsilon^{(1)}} \big(\mathcal{ P}_{\varepsilon} v\big)^2 \, dx \le
\ell_1^2 \int_{B_\varepsilon^{(1)}} \Big(\partial_{x_1} \big(\mathcal{P}_{\varepsilon} v\big) \Big)^2 \, dx \le C_1
\int_{\Omega_\varepsilon^{(1)}} |\nabla v|^2 \, dx
\end{equation}
for any function $v\in \mathcal{H}_\varepsilon .$ The same inequalities hold for $\Omega^{(2)}_\varepsilon$ and $\Omega^{(3)}_\varepsilon.$

\begin{remark}
Here and in what follows all constants $\{C_j\}$ and $\{c_j\}$
in inequalities are independent of the parameters $\varepsilon$ and $\alpha.$
\end{remark}

By the same way as in   \cite[Proposition 3.1]{Mel_Klev_M2AS-2018}  we prove the estimate
\begin{equation}\label{est2}
\int_{\Omega^{(0)}_\varepsilon} v^2 \, dx \le C_2 \Bigg( \varepsilon^2 \int_{\Omega^{(0)}_\varepsilon} |\nabla_{x}v|^2 \, dx +
  \varepsilon \sum_{i=1}^{3}\int_{\Upsilon_\varepsilon^{(i)} (\varepsilon\ell_0)} v^2 \, d\overline{x}_i \Bigg) \quad \forall \, v \in H^1(\Omega^{(0)}_\varepsilon).
\end{equation}
Then, again using the extension operator $\mathcal{P}_{\varepsilon}$ and taking into account the uniform Dirichlet conditions on $\{\Upsilon_\varepsilon^{(i)}(\ell_i)\}_{i=1}^3,$
we get  from \eqref{est2}  that
\begin{equation}\label{est3}
\int_{\Omega^{(0)}_\varepsilon} v^2 \, dx \le C_3 \varepsilon  \int_{\Omega_\varepsilon} |\nabla_{x}v|^2 \, dx \quad \forall \, v \in \mathcal{H_\varepsilon}.
\end{equation}

It follows from \eqref{est1} and \eqref{est3} that
\begin{equation}\label{est4}
\int_{\Omega_\varepsilon} v^2 \, dx \le C_4   \int_{\Omega_\varepsilon} |\nabla_{x}v|^2 \, dx \quad \forall \, v \in \mathcal{H_\varepsilon}.
\end{equation}
Thus, the usual norm$ \| v \|_1=(\int_{\Omega_{\varepsilon}}(|\nabla v|^2 + v^2)\,dx)^{1/2}$ and a new norm $\|\cdot\|_{\varepsilon} $ that is generated by the scalar product \eqref{1.4}  are uniformly  equivalent, i.e., there exist positive constants  $c_1, \varepsilon_0,$ such that for all $\varepsilon \in (0 , \varepsilon_0)$ the following inequalities hold:
$$
   \| v \|_{\varepsilon} \le \| v \|_1 \le c_1\|v \|_{\varepsilon}\,  \qquad \forall \, v \in \mathcal{H_\varepsilon}.
$$

It is easy to prove (see for more detail \cite{M-MMAS-2008}) the inequality
\begin{gather}\label{ineq1}
      \varepsilon \int_{\Gamma_\varepsilon^{(i)}} v^2 \, d\sigma_x
 \leq C_5 \int_{\Omega_\varepsilon^{(i)}} |\nabla_{x}v|^2 \, dx
 \qquad \forall \, v\in \mathcal{H_\varepsilon},
 \end{gather}
where $\Gamma_\varepsilon^{(i)} := \partial \Omega_\varepsilon^{(i)} \setminus \big(\Upsilon_{\varepsilon}^{(i)}(\varepsilon \ell_0) \cup \Upsilon_{\varepsilon}^{(i)}(\ell_i)\big)$ is the lateral surface of the cylinder $\Omega_\varepsilon^{(i)},  \ i\in \{1, 2, 3\}.$

\begin{proposition}\label{prop1}
For  any fixed $n\in \Bbb N $ there exists a constant $C_n$  such that
for all $\varepsilon$  the estimate
\begin{equation}\label{t0.1}
\lambda_{n}(\varepsilon) \le C_n
\end{equation}
 holds. In addition,  there exists a constant $c_1,$ which depends neither on $\varepsilon$ nor $n,$ such that
 for all $n\in \Bbb N$ and all $\varepsilon$
\begin{equation}\label{lower-est}
  \lambda_{n}(\varepsilon) \ge c_1 \cdot \left\{
                                        \begin{array}{ll}
                                          1, & \hbox{if} \ \  \alpha \in [0, 1], \\
                                          \varepsilon^{\alpha-1}, & \hbox{if} \ \ \alpha >1.
                                        \end{array}
                                      \right.
\end{equation}

If $\alpha>2,$ then   any fixed $n\in \Bbb N $
 \begin{equation}\label{t0.1_alfa>2}
\lambda_{n}(\varepsilon) \le C_n \, \varepsilon^{\alpha-2}.
\end{equation}
\end{proposition}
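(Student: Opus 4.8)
The plan is to derive all three estimates from the variational (min--max) characterization of the eigenvalues of the generalized spectral problem \eqref{identity}. Since $A_\varepsilon$ is self-adjoint, positive and compact, the eigenvalues \eqref{1.2} admit the Courant--Fischer representation
$$
\lambda_n(\varepsilon) = \min_{\substack{E\subset\mathcal{H}_\varepsilon\\ \dim E = n}} \ \max_{v\in E\setminus\{0\}} \frac{\langle v,v\rangle_\varepsilon}{(v,v)_{\mathcal{V}_\varepsilon}} = \min_{\substack{E\subset\mathcal{H}_\varepsilon\\ \dim E = n}} \ \max_{v\in E\setminus\{0\}} \frac{\int_{\Omega_\varepsilon}|\nabla v|^2\,dx}{(v,v)_{\mathcal{V}_\varepsilon}},
$$
so each upper bound reduces to a suitable choice of an $n$-dimensional trial subspace $E$, while the lower bound reduces to a uniform estimate of the denominator by the numerator.

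For the upper bound \eqref{t0.1} I would build $E$ from functions supported in a single cylinder, say $\Omega_\varepsilon^{(1)}$, depending only on the longitudinal variable $x_1$. Fixing $\delta\in(0,\ell_1)$ and choosing $n$ linearly independent $\psi_1,\dots,\psi_n\in C^\infty([0,\ell_1])$ that vanish on $[0,\delta]$ and at $x_1=\ell_1$, the functions $v_k(x):=\psi_k(x_1)$ lie in $\mathcal{H}_\varepsilon$ (they extend by zero across the node for all $\varepsilon$ with $\varepsilon\ell_0<\delta$) and vanish identically in $\Omega_\varepsilon^{(0)}$, so the concentrated mass never enters the denominator. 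The common factor $\pi\varepsilon^2$ coming from the cross-section area $\pi\varepsilon^2 h_1^2(x_1)$ then cancels between numerator and denominator, leaving the $\varepsilon$-independent one-dimensional Rayleigh quotient with weight $h_1^2$; its maximum over $E$ is a finite constant $C_n$, which gives \eqref{t0.1}.

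The lower bound \eqref{lower-est} follows from the monotonicity $\lambda_n(\varepsilon)\ge\lambda_1(\varepsilon)$ together with a uniform estimate of $(v,v)_{\mathcal{V}_\varepsilon}$ by the Dirichlet energy. Bounding the cylinder contributions by \eqref{est1} and the nodal contribution by $\varrho_0\le c_1$ combined with \eqref{est3}, I obtain
$$
(v,v)_{\mathcal{V}_\varepsilon} \le \big(3C_1 + c\,C_3\,\varepsilon^{1-\alpha}\big)\int_{\Omega_\varepsilon}|\nabla v|^2\,dx \qquad\forall\,v\in\mathcal{H}_\varepsilon,
$$
with constants independent of $\varepsilon$ and $\alpha$. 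Hence $\lambda_1(\varepsilon)\ge\big(3C_1+c\,C_3\,\varepsilon^{1-\alpha}\big)^{-1}$: for $\alpha\in[0,1]$ one has $\varepsilon^{1-\alpha}\le1$ and the right-hand side is bounded below by a positive constant, whereas for $\alpha>1$ the term $\varepsilon^{1-\alpha}\ge1$ dominates and produces the factor $\varepsilon^{\alpha-1}$. Since only $\lambda_1$ is used, the resulting $c_1$ depends neither on $n$ nor on $\varepsilon$.

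Finally, for $\alpha>2$ the estimate \eqref{t0.1_alfa>2} is obtained by concentrating the trial functions in the heavy node. Taking $n$ linearly independent $\Phi_1,\dots,\Phi_n\in C_0^\infty(\Xi^{(0)})$ and setting $v_k(x):=\Phi_k(x/\varepsilon)$, the rescaling $\xi=x/\varepsilon$ turns the numerator into $\varepsilon\int_{\Xi^{(0)}}|\nabla_\xi\Phi|^2\,d\xi$ and the denominator (carried entirely by the concentrated mass) into $\varepsilon^{3-\alpha}\int_{\Xi^{(0)}}\varrho_0\,\Phi^2\,d\xi$. The Rayleigh quotient on $E=\mathrm{span}\{v_k\}$ thus equals $\varepsilon^{\alpha-2}$ times the $\varepsilon$-independent quotient $\int|\nabla_\xi\Phi|^2/\int\varrho_0\Phi^2$, whose maximum over the fixed finite-dimensional subspace is a constant $C_n$, giving \eqref{t0.1_alfa>2}. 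I expect the only delicate point throughout to be the bookkeeping of the $\varepsilon$-powers under the anisotropic (cylinder) and isotropic (node) rescalings, together with the verification that every trial function genuinely belongs to $\mathcal{H}_\varepsilon$; the inequalities \eqref{est1} and \eqref{est3} do the remaining work.
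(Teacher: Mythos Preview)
Your proposal is correct and follows essentially the same min--max strategy as the paper: longitudinal test functions supported away from the node in one cylinder for \eqref{t0.1}, the combination of \eqref{est1} and \eqref{est3} applied to $\lambda_1(\varepsilon)$ for \eqref{lower-est}, and rescaled test functions concentrated in the node for \eqref{t0.1_alfa>2}. The only cosmetic difference is that the paper picks its trial functions as eigenfunctions of auxiliary problems (a one-dimensional Dirichlet problem on $(\ell_1/2,\ell_1)$ and a mixed Dirichlet--Neumann problem on $\Xi^{(0)}$) rather than generic $C^\infty$ or $C_0^\infty$ functions, but this changes nothing in the argument.
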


 \begin{proof} {\bf 1.}
  Let $ \mathcal{L}_n(\widetilde{\phi}_1,\ldots,\widetilde{\phi}_{n})$ be a
$n$-dimensional subspace of $\mathcal{H}_{\varepsilon},$ which is spanned on $n$ linearly independent functions
$$
\widetilde{\phi}_{k}=\left\{
     \begin{array}{rl}
   \phi_{k}(x_1) ,& \quad x\in \Omega_\varepsilon^{(1)}\cap \left\{x: \ x_1\in (\tfrac{\ell_1}{2} , \ell_1)\right\},
      \\[2mm]
0, & \quad x\in \Omega_\varepsilon\setminus\left(
 \Omega_\varepsilon^{(1)}\cap \left\{x: \ x_1\in (\tfrac{\ell_1}{2} , \ell_1)
 \right\}\right),
    \end{array}
     \right.
$$
where $\phi_1,\ldots,\phi_{n}$ are the eigenfunctions of
the spectral problem
\begin{equation*}
\left\{\begin{array}{l}
- \, \dfrac{d^2\phi_k(x_1)}{dx_1^2 } = \mu_k \,\phi_k(x_1), \quad  x_1\in (\tfrac{\ell_1}{2} , \ell_1),
\\\\
\phi_k(\tfrac{\ell_1}{2})= 0, \quad  \phi_k(\ell_1)= 0,
\end{array}\right.
\end{equation*}
which are orthonormalized  in $L^2\big((\tfrac{\ell_1}{2} , \ell_1)\big).$
Here, $0< \mu_1 < \mu_2 < \ldots < \mu_k < \ldots  \to +\infty$ as $ k \to +\infty.$

By virtue of the minimax principle for eigenvalues, we have
\begin{equation}\label{t0.3}
\begin{split}
  \lambda_{n}(\varepsilon) & = \min_{E \in {\bf E}_n}\;
   \max_{ v \in E\setminus \{0\} } \;
       \frac{\int_{\Omega_{\varepsilon}} |\nabla v |^2\, dx}
       {\sum_{i=1}^{3}\int_{\Omega^{(i)}_{\varepsilon}} v^2\,dx +
 \varepsilon^{-\alpha}\int_{\Omega^{(0)}_{\varepsilon}} \varrho_0(\frac{x}{\varepsilon}) \, v^2\,dx}
\le
  \min_{E \in {\bf E}_n}\;
   \max_{ v \in E\setminus \{0\} } \;
       \frac{\int_{\Omega_{\varepsilon}} |\nabla v |^2\ dx}
       {\int_{\Omega^{(1)}_{\varepsilon}} v^2\,dx}
              \\[2mm]
    & \le \,   \max_{0\ne v \in \mathcal{L}_n}  \;
\frac{ \int_{\Omega^{(1)}_{\varepsilon}} | v' |^2\, dx}{\int_{\Omega^{(1)}_{\varepsilon}} v^2\,dx}
\, \le \,  \frac{\max_{x_1\in  [0, \ell_1]} h_1^2(x_1)}{\min_{x_1\in [0, \ell_1]} h_1^2(x_1)}\max_{0\ne v \in \mathcal{L}_n}  \
\frac{\int_{(\tfrac{\ell_1}{2} , \ell_1)} (v')^2\,dx_1}{\int_{(\tfrac{\ell_1}{2} , \ell_1)} v^2\,dx_1} \le c_1 \mu_n.
\end{split}
\end{equation}
Here ${\bf E_n}$ is a set of all subspaces of $\mathcal{H}_{\varepsilon}$ with dimension $n.$

Now let us  prove the lower estimate. With the help of \eqref{est1} and \eqref{est3} , we have
\begin{equation*}
      \lambda_{n}(\varepsilon) \ge \lambda_{1 }(\varepsilon)=
      \min_{ v \in \mathcal{H}_{\varepsilon}\setminus \{0\} } \;
       \frac{\int_{\Omega_{\varepsilon}} |\nabla v |^2\, dx}
       {\sum_{i=1}^{3}\int_{\Omega^{(i)}_{\varepsilon}} v^2\,dx +
\varepsilon^{-\alpha}\int_{\Omega^{(0)}_{\varepsilon}}\varrho_0(\frac{x}{\varepsilon}) \,  v^2\,dx}
\ge  \frac{1}
       { C_2 + \varepsilon^{1-\alpha}  C_4 },
   \end{equation*}
from where it follows \eqref{lower-est}.

\medskip
\noindent
{\bf 2.}
Now let $ \mathcal{L}_n(\widetilde{\psi}_1,\ldots,\widetilde{\psi}_{n})$ be a
$n$-dimensional subspace of $\mathcal{H}_{\varepsilon},$ which is spanned on $n$ linearly independent functions
$$
\widetilde{\psi}_{k}=\left\{
     \begin{array}{rl}
   0,& \quad x\in \Omega_\varepsilon^{(i)}, \quad i\in \{1, 2,3\},
      \\[2mm]
\Psi\big(\frac{x}{\varepsilon}\big), & \quad x\in \Omega_\varepsilon^{(0)},
    \end{array}
     \right.
$$
where $\Psi_1,\ldots,\Psi_{n}$ are the eigenfunctions of
the spectral problem
\begin{equation}\label{spectr_junc_probl}
 \left\{\begin{array}{rcll}
  -\Delta_{\xi}{\Psi_{k}}(\xi) & = &
   \eta_k \, \Psi(\xi),   & \xi\in\Xi^{(0)},
 \\[1mm]
   \partial_{\nu_\xi}{\Psi_{k}}(\xi) & = &
   0,      & \xi\in \partial \Xi^{(0)}\setminus \Big( \bigcup_{i=1}^3\Upsilon_\varepsilon^{(i)} (\varepsilon\ell_0)\Big),
 \\[1mm]
   \Psi_k(\xi)  & = & 0,   & \xi \in \Upsilon_\varepsilon^{(i)} (\varepsilon\ell_0),  \quad i=1,2,3.
 \end{array}\right.
\end{equation}
which are orthonormalized  in $L^2\big(\Xi^{(0)}\big).$
Here, $0< \eta_1 < \eta_2 \le\ldots \le  \eta_k \le \ldots  \to +\infty$ as $ k \to +\infty.$

Again with the help of the minimax principle for eigenvalues, we get
\begin{equation}\label{t0.3+alfa>2}
  \lambda_{n}(\varepsilon)  \le \,   \varepsilon^{\alpha-2} \max_{0\ne \psi \in \mathcal{L}_n}  \;
\frac{ \int_{\Omega^{(0)}_{\varepsilon}} |\nabla_\xi \psi(\xi) |^2\Big|_{\xi=\frac{x}{\varepsilon}} \, dx}{\int_{\Omega^{(0)}_{\varepsilon}}
\psi^2\big(\frac{x}{\varepsilon} \big)^2\,dx}
\, = \,   \eta_n \, \varepsilon^{\alpha-2}.
\end{equation}
\end{proof}

From inequalities obtained in Proposition~\ref{prop1} we can expect  the following cases  in the asymptotic behaviour of the eigenvalues and eigenfunctions  depending on the parameter $\alpha$: $\alpha =0,$ $\alpha \in (0, 1),$ $\alpha =1,$ $\alpha \in (1, 2),$ $\alpha =2,$ and $\alpha > 2.$ In this paper we will consider the first three.

\section{Asymptotic expansions in the case $\alpha =0.$}\label{Sec_3}

We start from the case $\alpha = 0.$ In this case there is no big difference between  the node density and the density of the cylinders;
they can be even equal. Using  the approach of \cite{Mel_Klev_AA-2016},
the following ansatzes are proposed  for the approximation of an eigenfunction $u^\varepsilon$ (the index $n$ is omitted) of  the problem~\eqref{1.1}:
\begin{enumerate}
  \item for each $ i \in \{1,2,3\}$  the regular ansatz
  \begin{equation}\label{regul}
\mathcal{U}_\infty^{(i)} := w_0^{(i)} (x_i) + \varepsilon \, w_1^{(i)} (x_i)
 +  \sum\limits_{k=2}^{+\infty} \varepsilon^{k}
    \left(
    u_k^{(i)} \left( x_i, \dfrac{\overline{x}_i}{\varepsilon} \right) +  w_k^{(i)} (x_i)  \right)
\end{equation}
is  located inside of each thin cylinder $\Omega^{(i)}_\varepsilon $
and their terms  depend both on the corresponding longitudinal variable $x_i$ and so-called
``fast variables'' $\dfrac{\overline{x}_i}{\varepsilon};$
\item
 the boundary-layer ansatz
\begin{equation}\label{prim+}
{\Pi}_\infty^{(i)}
 := \sum\limits_{k=0}^{+\infty}\varepsilon^{k}\Pi_k^{(i)}
    \left(\frac{\ell_i-x_i}{\varepsilon},\frac{\overline{x}_i}{\varepsilon}\right)
\end{equation}
is located in a neighborhood of the base $\Upsilon_{\varepsilon}^{(i)} (\ell_i)$ of the thin cylinder $\Omega^{(i)}_\varepsilon;$
 \item
and the inner  one
  \begin{equation}\label{junc}
  \mathcal{N}^{(\infty)} :=\sum\limits_{k=0}^{+\infty}\varepsilon^k N_k\left(\frac{x}{\varepsilon}\right)
\end{equation}
is located in a neighborhood of the node $\Omega^{(0)}_\varepsilon$.
\end{enumerate}
Asymptotic expansion for the corresponding eigenvalue $\lambda(\varepsilon)$ (the index $n$ is omitted)  is sought in the form
\begin{equation}\label{exp-EVl}
\mathcal{L}^{(\infty)} := \sum\limits_{k=0}^{+\infty}\varepsilon^k \mu_k .
\end{equation}

\subsection{Regular parts of the asymptotics}\label{regul_asymp}

Formally substituting the series \eqref{regul} and  \eqref{exp-EVl} into the differential equation  of the problem \eqref{1.1}
in each thin cylinder $\Omega_\varepsilon^{(i)},$  we obtain
$$
 -  \sum\limits_{k=2}^{+\infty} \varepsilon^{k}
    \frac{d^2{u}_{k}^{(i)}}{d{x_i}^2} (x_i,\overline{\xi}_i)
 -  \sum\limits_{k=0}^{+\infty} \varepsilon^{k}
    \Delta_{\overline{\xi}_i}{u}_{k+2}^{(i)} (x_i,\overline{\xi}_i)
 -  \sum\limits_{k=0}^{+\infty} \varepsilon^{k}
    \frac{d^2 w_k^{(i)}}{d{x_i}^2} (x_i)
  \approx
$$
\begin{equation}\label{rel_1}
\mu_0 w_k^{(i)}(x_i) + \varepsilon \big( \mu_0 w_1^{(i)}(x_i) + \mu_1 w_0^{(i)}(x_i)\big)
+
\sum\limits_{k=2}^{+\infty} \varepsilon^{k}\sum_{m=0}^{k} \mu_m \left( u_{k-m}^{(i)} \left( x_i, \dfrac{\overline{x}_i}{\varepsilon} \right)
 +  w_{k-m}^{(i)} (x_i)\right),
\end{equation}
where $\xi_i = \dfrac{{x}_i}{\varepsilon}, \ \overline{\xi}_i = \dfrac{\overline{x}_i}{\varepsilon}.$

It is easy to calculate the outer unit normal to $\Gamma^{(i)}_\varepsilon:$
$$
{\boldsymbol{\nu}}^{(i)} (x_i, \ \overline{\xi}_i)
 =  \dfrac{1}{\sqrt{1 + \varepsilon^2 |h_i^\prime (x_i)|^2 \,}}
    \big( -\varepsilon h_i^\prime (x_i), \ \overline{\nu}_i (\overline{\xi}_i) \big)
 =
\left\{\begin{array}{lr}
    \dfrac{ \big(
   -\varepsilon h_1^\prime (x_1), \
    {\nu}_2^{(1)} (\overline{\xi}_1), \
    {\nu}_3^{(1)} (\overline{\xi}_1)
    \big) }{\sqrt{1 + \varepsilon^2 |h_1^\prime (x_1)|^2 \,}},
    \ & i=1,
\\
    \dfrac{ \big(
    {\nu}_1^{(2)} (\overline{\xi}_2), \
   -\varepsilon h_2^\prime (x_2), \
    {\nu}_3^{(2)} (\overline{\xi}_2)
    \big) }{\sqrt{1 + \varepsilon^2 |h_2^\prime (x_2)|^2 \,}},
    \ & i=2,
\\
    \dfrac{ \big(
    {\nu}_1^{(3)} (\overline{\xi}_3), \
    {\nu}_2^{(3)} (\overline{\xi}_3), \
   -\varepsilon h_3^\prime (x_3)
    \big) }{\sqrt{1 + \varepsilon^2 |h_3^\prime (x_3)|^2 \,}},
    \ & i=3,
\end{array}\right.
$$
where $\overline{\nu}_i (\overline{\xi}_i)$ is the outward normal to  the disk
${\Upsilon_{i}(x_i)}:=\{ \overline{\xi}_i\in\Bbb{R}^2 \ : \ |\overline{\xi}_i|< h_i(x_i) \}.$
Taking the view of the unit normal into account and putting the sum \eqref{regul}
into  the Neumann condition on $\Gamma^{(i)}_\varepsilon,$  we get
\begin{equation}\label{eq-2}
   \sum\limits_{k=1}^{+\infty}\varepsilon^{k}
    \partial_{\overline{\nu}_i (\overline{\xi}_i)}{u}_{k+1}^{(i)} (x_i,\overline{\xi}_i)  - h_i^\prime(x_i)
    \sum\limits_{k=3}^{+\infty}\varepsilon^{k}
    \frac{\partial {u}_{k-1}^{(i)}}{\partial x_i} (x_i,\overline{\xi}_i)
 -   h_i^\prime(x_i)
    \sum\limits_{k=1}^{+\infty}\varepsilon^{k}
    \frac{d w_{k-1}^{(i)}}{d{x_i}} (x_i) \approx 0.
\end{equation}

Equating  coefficients of the same powers of $\varepsilon$ in \eqref{rel_1} and \eqref{eq-2}, we deduce recurrent relations of  boundary-value problems for  determining the coefficients in \eqref{regul}. The problem problem for  $u_2^{(i)}$ is as follows:
\begin{equation}\label{regul_probl_2}
\left\{\begin{array}{rcll}
-\Delta_{\overline{\xi}_i}{u}_{2}^{(i)} (x_i,\overline{\xi}_i)
 & = &
\dfrac{d^{\,2} w_0^{(i)}}{d{x_i}^2} (x_i) + \mu_0 w_0^{(i)}(x_i),
 & \ \ \overline{\xi}_i\in\Upsilon_i (x_i),
\\[2mm]
-\partial_{\nu_{\overline{\xi}_i}}{u}_{2}^{(i)}(x_i,\overline{\xi}_i)
 & = &
- \ h_i^\prime (x_i)\dfrac{d w_0^{(i)}}{d{x_i}} (x_i) ,
 & \ \ \overline{\xi}_i\in\partial\Upsilon_i(x_i),
\\[2mm]
\langle u_2^{(i)} (x_i,\cdot) \rangle_{\Upsilon_i (x_i)}
 & = &
0.
 &
\end{array}\right.
\end{equation}
Here, the variable $x_i$ is regarded as a parameter from the interval $ \ I_\varepsilon^{(i)} =\{x: \ x_i\in  (\varepsilon \ell_0, \ell_i), \ \overline{x_i}=(0,0)\},$
$$
\langle u(x_i,\cdot) \rangle_{\Upsilon_i(x_i)} :=  \int_{\Upsilon_i(x_i)}u (x_i,\overline{\xi}_i)\, d{\overline{\xi}_i}, \quad \ i=1,2,3.
$$

For each $i\in \{1, 2, 3\},$ the problem (\ref{regul_probl_2}) is the inhomogeneous Neumann problem for the Poisson equation in the disk $\Upsilon_i(x_i)$ with respect to the variables ${\overline{\xi}_i}\in\Upsilon_i(x_i).$  Writing down
the necessary and sufficient condition for the solvability of (\ref{regul_probl_2}), we get the following differential equation:
\begin{equation}\label{omega_probl_2}
 -  \dfrac{d}{d{x_i}}\left(h_i^2(x_i)\frac{dw_0^{(i)}}{d{x_i}}(x_i)\right)
 =  \mu_0\, h_i^2(x_i)\, w_0^{(i)}(x_i),
\quad x_i\in I_\varepsilon^{(i)}.
\end{equation}
Let us assume that there exists a number $\mu_0$ and a function $w_0^{(i)}\neq 0$ satisfying the  equation \eqref{omega_probl_2})
(the existence will be justified later). Then,  a solution to the problem (\ref{regul_probl_2}) exists and the third relation in (\ref{regul_probl_2}) supplies its  uniqueness.

To determine the coefficient $u_3^{(i)},$ we obtain the following problem:
\begin{equation}\label{regul_probl_3}
\left\{\begin{array}{rcll}
-\Delta_{\overline{\xi}_i}{u}_{3}^{(i)}(x_i,\overline{\xi}_i)
 & = &
\dfrac{d^{\,2}w_1^{(i)}}{d{x_i}^2}(x_i) + \mu_0\, w_1^{(i)}(x_i) + \mu_1\,  w_0^{(i)}(x_i),
 &\quad
\overline{\xi}_i\in\Upsilon_i(x_i),
 \\[2mm]
-\partial_{\nu_{\overline{\xi}_i}}{u}_{3}^{(i)}(x_i,\overline{\xi}_i)
 & = &
- \ h_i^\prime(x_i)\dfrac{dw_1^{(i)}}{d{x_i}}(x_i),
 &\quad
\overline{\xi}_i\in\partial\Upsilon_i(x_i),
 \\[2mm]
\langle u_3^{(i)}(x_i,\cdot) \rangle_{\Upsilon_i(x_i)}
 & = &
0,
 &
\end{array}\right.
\end{equation}
for each $ i\in\{1,2,3\}.$ Repeating the above reasoning, we find
\begin{equation}\label{omega_probl_3}
 -   \dfrac{d}{d{x_i}}\left(h_i^2(x_i)\frac{d\omega_1^{(i)}}{d{x_i}}(x_i)\right)
 =   \mu_0\, h_i^2(x_i)\, w_1^{(i)}(x_i) + \mu_1\, h_i^2(x_i)\,  w_0^{(i)}(x_i),
\quad x_i\in I_\varepsilon^{(i)}, \quad i=1,2,3.
\end{equation}

For the coefficients  $\{u_k^{(i)}:  \ k\geq 4, \ i=1, 2, 3\},$ we obtain the following boundary-value problems:
\begin{equation}\label{regul_probl_k}
\left\{\begin{array}{rcll}
-\Delta_{\overline{\xi}_i}{u}_{k}^{(i)}(x_i,\overline{\xi}_i)
 & = &
\dfrac{d^{\,2}w_{k-2}^{(i)}}{d{x_i}^2}(x_i) + \dfrac{\partial^{\,2}u_{k-2}^{(i)}}{\partial{x_i}^2}(x_i,\overline{\xi}_i) +
\mu_{k-3}\, w_1^{(i)}(x_i) + \mu_{k-2}\,  w_0^{(i)}(x_i)
 &
\\[3mm]
 &   &
+ \sum_{m=0}^{k-4} \mu_m \left(u_{k-2-m}^{(i)} \left( x_i, \dfrac{\overline{x}_i}{\varepsilon} \right) +  w_{k-2-m}^{(i)} (x_i)  \right),
 & \overline{\xi}_i\in\partial\Upsilon_i(x_i),
 \\[3mm]
-\partial_{\nu_{\overline{\xi}_i}}{u}_{k}^{(i)}(x_i,\overline{\xi}_i)
 & = &
- \ h_i^\prime(x_i) \left(\dfrac{d w_{k-2}^{(i)}}{d{x_i}}(x_i) + \dfrac{\partial{u}_{k-2}^{(i)}}{\partial{x_i}}(x_i,\overline{\xi}_i)\right)
 & \overline{\xi}_i\in\Upsilon_i(x_i),
 \\[2mm]
\langle u_k^{(i)}(x_i,\cdot) \rangle_{\Upsilon_i(x_i)} & = & 0. &
\end{array}\right.
\end{equation}
Writing down  the solvability condition for the problem (\ref{regul_probl_k}), we see  that $w_{k-2}^{(i)}$ should be  a solution to the ordinary differential equation
\begin{equation}\label{omega_probl_k}
 -   \dfrac{d}{d{x_i}}\left(h_i^2(x_i)\dfrac{d w_{k-2}^{(i)}}{d{x_i}}(x_i)\right)
 =  \mu_0 \, h_i^2(x_i) \,  w_{k-2}^{(i)} (x_i) +
\sum_{m=1}^{k-2} \mu_m \, h_i^2(x_i) \,  w_{k-2-m}^{(i)} (x_i),
    \quad x_i\in I_\varepsilon^{(i)}.
\end{equation}
Here $ k\geq4 , \ i=1, 2, 3.$

\begin{remark}
Boundary conditions for the differential equations (\ref{omega_probl_2}), (\ref{omega_probl_3}) and (\ref{omega_probl_k}) are a priori unknown. They will be determined in the process of constructing the formal asymptotics.

The solution to the problem (\ref{regul_probl_k}) is uniquely determined if we find all the coefficients
$u_2^{(i)},\dots,u_{k-1}^{(i)}$ and $w_0^{(i)},\ldots, w_{k-2}^{(i)}$ of the expansion (\ref{regul}) and $\mu_0, \mu_1, \ldots, \mu_{k-2}$ of the
expansion \eqref{exp-EVl}. We show the solvability of this recursive procedure in the unique way in  Subsection~\ref{justification}.
\end{remark}

 \subsection{Boundary-layer part of the asymptotics}

Here we construct the boundary-layer parts of the asymptotics, which  compensate residuals from the regular ones
on the bases of the thin cylinders $\Omega^{(i)}_\varepsilon, \ i=1, 2, 3.$

Substituting the series \eqref{prim+} into \eqref{1.1} and collecting coefficients with the same powers of $\varepsilon$, we get the following mixed boundary-value problems:
\begin{equation}\label{prim+probl}
 \left\{\begin{array}{rcll}
  -\Delta_{\xi_i^*, \overline{\xi}_i} \Pi_k^{(i)}(\xi_i^*,\overline{\xi}_i) & =
   & 0,
   & \xi_i^*\in(0,+\infty), \quad \overline{\xi}_i\in\Upsilon_i(\ell_i),
   \\[2mm]
  -\partial_{\nu_{\overline{\xi}_i}} \Pi_k^{(i)}(\xi_i^*,\overline{\xi}_i) & =
   & 0,
   & \xi_i^*\in(0,+\infty), \quad \overline{\xi}_i\in\partial\Upsilon_i(\ell_i),
   \\[2mm]
  \Pi_k^{(i)}(0,\overline{\xi}_i) & =
   & \Phi_k^{(i)}(\overline{\xi}_i),
   & \overline{\xi}_i\in\Upsilon_i(\ell_i),
   \\[2mm]
  \Pi_k^{(i)}(\xi_i^*,\overline{\xi}_i) & \to
   & 0,
   & \xi_i^*\to+\infty, \quad \overline{\xi}_i\in\Upsilon_i(\ell_i),
 \end{array}\right.
\end{equation}\\[1mm]
where $\displaystyle{ \xi_i^* = \frac{\ell_i-x_i}{\varepsilon}},$ $\displaystyle{\overline{\xi}_i = \frac{\overline{x}_i}{\varepsilon}};$
\ $\Phi_k^{(i)} = -w_{k}^{(i)}(\ell_i), \ k=0,1;$ \ $\Phi_k^{(i)}(\overline{\xi}_i)
 = -u_k^{(i)}(\ell_i,\overline{\xi}_i) -  w_{k}^{(i)}(\ell_i),$ $k\geq 2,$ $ k\in \Bbb N.$
Using the method of separation of variables, we determine the solution
\begin{equation}\label{view_solution}
\Pi_k^{(i)}(\xi_i^*,\overline{\xi}_i)
 =  a_{k,0}^{(i)}
 +  \sum\limits_{p=1}^{+\infty}a_{k,p}^{(i)} \,
    \Theta_p^{(i)}(\overline{\xi}_i) \,
    \exp({-\eta_p^{(i)}\xi_i^*})
\end{equation}
to the problem (\ref{prim+probl}) at a fixed index $k,$
where
$$
a_{k,p}^{(i)}
 =      \int_{\Upsilon_i(\ell_i)}
    \Phi_k^{(i)}(\overline{\xi}_i)\,
    \Theta_p^{(i)}(\overline{\xi}_i) \, d\overline{\xi}_i,
$$
$$
a_{k,0}^{(i)}
 =  \dfrac{1}{|\Upsilon_i(\ell_i)|}
    \int_{\Upsilon_i(\ell_i)}
    \Phi_k^{(i)}(\overline{\xi}_i) \, d\overline{\xi}_i
 = -\dfrac{1}{\pi h_i^2(\ell_i)}
    \int_{\Upsilon_i(\ell_i)}
    u_k^{(i)} (\ell_i,\overline{\xi}_i) \, d\overline{\xi}_i
 -  w_{k}^{(i)}(\ell_i)
 = - w_{k}^{(i)}(\ell_i).
$$
Here $\Theta_0^{(i)}\equiv1,$ $ \ \{\Theta_p^{(i)}\}_{p\in \Bbb N}$  are
orthonormalized  in $L^2(\Upsilon_i(\ell_i))$ eigenfunctions of the spectral problem
\begin{equation}\label{spectral_problem}
\left\{\begin{array}{lcll}
- \Delta_{\overline{\xi}_i} \Theta^{(i)}_p  & = &  (\eta^{(i)}_p)^2\Theta^{(i)} &
\mbox{in} \ \Upsilon_i(\ell_i),
\\
\partial_{\nu_{\overline{\xi}_i}} \Theta^{(i)}_p & = & 0 &
\mbox{on} \ \partial\Upsilon_i(\ell_i),
\end{array}\right.
\end{equation}
and $\eta_0^{(i)}=0 <\eta_1^{(i)} \le \eta_2^{(i)}\le\ldots\le\eta_p^{(i)}\le\ldots .$

From the fourth condition in (\ref{prim+probl}) it follows  that coefficient $a_{k,0}^{(i)}$
must be equal to $0.$ As a result, we arrive at  the following boundary conditions for the functions $\{w_{k}^{(i)}\}$:
\begin{equation}\label{bv_left}
w_{k}^{(i)}(\ell_i)=0, \quad k\in\Bbb{N}_0, \quad i=1,2,3.
\end{equation}

\begin{remark}\label{asymp-Pi}
Since $\Phi_0^{(i)}\equiv\Phi_1^{(i)}\equiv0$, we conclude that $\Pi_{0}^{(i)}\equiv\Pi_{1}^{(i)}\equiv0, \quad i=1,2,3.$
Moreover, from representation (\ref{view_solution}) with regard to \eqref{bv_left} it follows that
\begin{equation}\label{as_estimates}
\Pi_k^{(i)}(\xi_i^*,\overline{\xi}_i)
 = \mathcal{O}(\exp(- \eta_1^{(i)}\xi_i^*))
\quad \mbox{as} \quad \xi_i^*\to+\infty, \quad i=1,2,3.
\end{equation}
\end{remark}


 \subsection{Inner part of the asymptotics}\label{inner_asymp}
 To obtain conditions on  the functions $\{\omega_k^{(i)}\}, \ i=1,2,3$ at the point $0,$ we need to run the inner part~ (\ref{junc}) of the asymptotics in a neighborhood of the node $\Omega^{(0)}_\varepsilon$. For this purpose we pass to the variables $\xi=\frac{x}{\varepsilon}.$ Letting  $\varepsilon$ to $0,$ we see  that the domain $\Omega_\varepsilon$ is transformed into the unbounded domain $\Xi$ that  is the union of the domain~$\Xi^{(0)}$ and three semibounded cylinders
$$
\Xi^{(i)}
 =  \{ \xi=(\xi_1,\xi_2,\xi_3)\in\Bbb R^3 \ :
    \quad  \ell_0<\xi_i<+\infty,
    \quad |\overline{\xi}_i|<h_i(0) \},
\qquad i=1,2,3,
$$
i.e., $\Xi$ is the interior of the set $\bigcup_{i=0}^3\overline{\Xi^{(i)}}$ (see Fig.~\ref{Fig-5}).

\begin{figure}[htbp]
\begin{center}
\includegraphics[width=8cm]{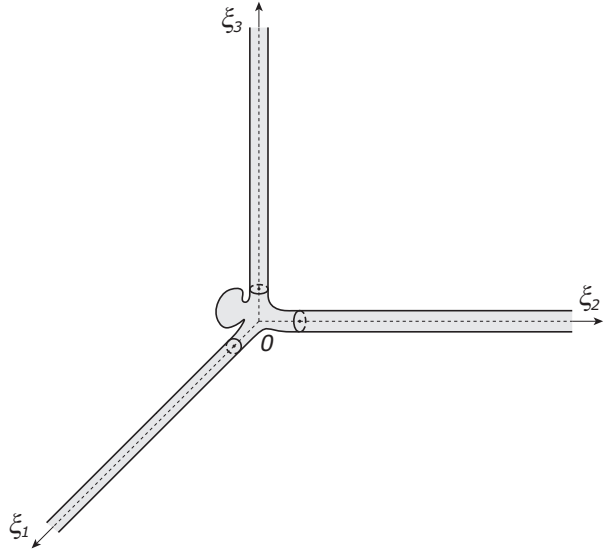}
\vskip - 10pt
\caption{Domain $\Xi$}\label{Fig-5}
\end{center}
\end{figure}

Substituting the series \eqref{junc} and \eqref{exp-EVl}  into the problem \eqref{1.1} and equating coefficients at the same powers of $\varepsilon$, we derive the following relations for $\{N_k\}:$
\begin{equation}\label{junc_probl_n}
 \left\{\begin{array}{rcll}
  -\Delta_{\xi}{N_k}(\xi) & = & \mathcal{R}(\xi) \sum_{m=0}^{k-2} \mu_m \, N_{k-2-m}(\xi),      &
   \quad \xi\in\Xi
\\[2mm]
   \partial_{{\boldsymbol \nu}_\xi}{N_k}(\xi) & = &
   0,                               &
   \quad \xi\in \partial \Xi
\\[2mm]
   N_k(\xi)                                                               & \sim &
   w^{(i)}_{k}(0) + \Psi^{(i)}_{k}(\xi),                                    &
   \quad \xi_i \to +\infty, \ \ \xi  \in \Xi^{(i)}, \quad i=1,2,3
   \\[2mm]
\left[N_k \right]_{|_{\xi_i= \ell_0}} &= & \left[\partial_{\xi_i}N_k
\right]_{|_{\xi_i= \ell_0}}=0 , & \quad  x \in \Upsilon_i(\ell_0), \ \ i=1,2,3,
 \end{array}\right.
\end{equation}
where
$$
\mathcal{R}(\xi) =
\left\{
  \begin{array}{ll}
    1, & \xi \in \Xi \setminus \Xi^{(0)},
\\
    \varrho_0(\xi), & \xi \in \Xi^{(0)}.
  \end{array}
\right.
$$
Relations in the third line of \eqref{junc_probl_n} appear by matching the regular and inner asymptotics in a neighborhood of the node, namely the asymptotics of the terms $\{N_k\}$ as $\xi_i \to +\infty$ have to coincide with the corresponding asymptotics of  terms of the regular expansions (\ref{regul}) as $x_i =\varepsilon \xi_i \to +0, \ i=1,2,3,$ respectively.
Expanding each term of the regular asymptotics in the Taylor series at the points $x_i=0, \ i=1,2,3,$
and collecting the coefficients of the same powers of $\varepsilon,$  we get
\begin{equation}\label{Psi_k}
\Psi_{0}^{(i)} \equiv 0, \qquad
\Psi_{k}^{(i)}(\xi_i)
 =   \sum\limits_{j=1}^{k} \dfrac{\xi_i^j}{j!}
     \dfrac{d^j w_{k-j}^{(i)}}{dx_i^j} (0),
\quad  i=1,2,3, \ \ k \in \Bbb N.
\end{equation}

\begin{remark}
Since the function $h_i$  is equal to some constant in a neighborhood of  $x_i=0,$
coefficients $\{u_{k}^{(i)}\}_{k\in \Bbb N}$ vanish in this neighborhood.

To simplify formulas, we will not prescribe the conjugation conditions  on  $\Upsilon_i(\ell_0), \  i=1,2,3,$ in the future.
\end{remark}

We look for a solution to the problem \eqref{junc_probl_n} with a fixed index  $k\in \Bbb N$  in the form
\begin{equation}\label{new-solution}
N_k(\xi) = \sum\limits_{i=1}^3 \Psi_{k}^{(i)}(\xi_i)\chi_i(\xi_i) + \widetilde{N}_k(\xi),
\end{equation}
where $ \chi_i \in C^{\infty}(\Bbb{R}_+),\ 0\leq \chi_i \leq1$ and
$$
\chi_i(\xi_i) =
\left\{\begin{array}{ll}
 0, & \ \  \xi_i \in [0,  1+\ell_0],
\\[2mm]
 1, &  \ \  \xi_i \geq 2+\ell_0,
\end{array}\right. \qquad i=1,2,3.
$$
Then $\widetilde{N}_k$ should be  a  solution to the problem
\begin{equation}\label{junc_probl_general}
 \left\{\begin{array}{rcll}
  -\Delta_{\xi}{\widetilde{N}_k}(\xi) & = &
   \widetilde{F}_k(\xi),                  &
   \quad \xi\in\Xi,
\\[2mm]
   \partial_{\boldsymbol{\nu}_\xi}{\widetilde{N}_k}(\xi) & = &
   0,                                           &
   \quad \xi\in \partial \Xi,
 \end{array}\right.
\end{equation}
and satisfy the conditions:
\begin{equation}\label{junc_probl_general+cond}
   \widetilde{N}_k(\xi)  \rightarrow  w^{(i)}_{k}(0)
   \quad \text{as} \quad \xi_i \to +\infty, \ \  \xi  \in \Xi^{(i)}, \quad i=1,2,3,
\end{equation}
where
$$
\widetilde{F}_1(\xi)
 =  \sum\limits_{i=1}^3
    \Big(
    \xi_i\dfrac{d w_0^{(i)}}{dx_i}(0) \chi_i^{\prime\prime}(\xi_i)
 + 2\dfrac{d w_0^{(i)}}{dx_i}(0) \chi_i^{\prime}(\xi_i)
    \Big),
$$
$$
\widetilde{F}_k(\xi)
 =  \sum\limits_{i=1}^3
    \Bigg[ \Big(\Psi_{k}^{(i)}(\xi_i)\chi^\prime_i(\xi_i)\Big)^\prime + \Big(\Psi_{k}^{(i)}(\xi_i)\Big)^\prime \chi^\prime_i(\xi_i)
+ \Big(\Psi_{k}^{(i)}(\xi_i)\Big)^{\prime\prime}  \chi_i(\xi_i)\Bigg]
$$
$$
 +  \mathcal{R}(\xi)\sum_{m=0}^{k-2} \mu_n \, N_{k-2-m}(\xi), \quad k \geq 2.
$$

The existence of a solution to the  problem (\ref{junc_probl_general}) in the corresponding energetic space can be obtained from general results about the
asymptotic behavior of solutions to elliptic problems in domains with different exits to infinity \cite{Ko-Ol,La-Pa,Na-Pla,Naz96}.
We will use approach proposed in \cite{Naz96, ZAA99}.

 Let $C^{\infty}_{0,\xi}(\overline{\Xi})$ be a space of functions infinitely differentiable in $\overline{\Xi}$ and finite with
respect to  $\xi$, i.e.,
$$
\forall \,v\in C^{\infty}_{0,\xi}(\overline{\Xi}) \quad \exists \,R>0 \quad \forall \, \xi\in\overline{\Xi} \quad \xi_i \geq R, \ \ i=1,2,3 \, : \quad v(\xi)=0.
$$
We now define a  space  $\mathcal{H} := \overline{\left( C^{\infty}_{0,\xi}(\overline{\Xi}), \ \| \cdot \|_\mathcal{H} \right)}$, where
$$
\| v \|_\mathcal{H}
 =  \sqrt{\int_\Xi|\nabla v(\xi)|^2 \, d\xi + \int_\Xi |v(\xi)|^2 |\rho(\xi)|^2 \, d\xi \, } ,
$$
and the weight function  $ \rho \in C^{\infty}(\Bbb{R}^3),\ 0\leq \rho \leq1$ and
$$
\rho (\xi) =
\left\{\begin{array}{ll}
1,            & \ \                    \xi \in \Xi^{(0)}, \\
|\xi_i|^{-1}, & \  \ \xi_i \geq \ell_0+1, \ \xi \in \Xi^{(i)}, \quad i=1,2,3.
\end{array}\right.
$$

A function $\widetilde{N}_k$ from the space $\mathcal{H}$ is called a weak solution to the problem (\ref{junc_probl_general}) if
\begin{equation}\label{integr}
    \int_{\Xi} \nabla \widetilde{N}_k \cdot \nabla v \, d\xi
 =  \int_{\Xi} \widetilde{F}_k \, v \, d\xi \quad \forall \, v\in\mathcal{H}.
\end{equation}

\begin{proposition}\label{tverd1}
Assume that  $\rho^{-1} \widetilde{F}_k\in L^2(\Xi).$  Then there exist a weak solution to the problem (\ref{junc_probl_general}) if and only if
\begin{equation}\label{solvability}
    \int_{\Xi} \widetilde{F}_k \, d\xi =  0.
\end{equation}
This solution is defined up to an additive constant.
The additive constant  can be chosen to guarantee  the existence and uniqueness of a weak solution to the problem (\ref{junc_probl_general}) with
the following differentiable asymptotics:
\begin{equation}\label{inner_asympt_general}
\widehat{N}_k(\xi)=\left\{
\begin{array}{rl}
    \mathcal{O}(\exp( - \gamma_1 \xi_1)) & \mbox{as} \ \ \xi_1\to+\infty, \ \ \xi  \in \Xi^{(1)},
\\[2mm]
    \boldsymbol{\delta_k^{(2)}}
 +  \mathcal{ O}(\exp( - \gamma_2 \xi_2)) & \mbox{as} \ \ \xi_2\to+\infty, \ \ \xi  \in \Xi^{(2)},
\\[2mm]
   \boldsymbol{ \delta_k^{(3)}}
 +  \mathcal{ O}(\exp( - \gamma_3 \xi_3)) & \mbox{as} \ \ \xi_3\to+\infty, \ \ \xi  \in \Xi^{(3)},
\end{array}
\right.
\end{equation}
where $\gamma_i, \ i=1,2,3 $ are positive constants.
\end{proposition}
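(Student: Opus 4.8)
The plan is to treat \eqref{junc_probl_general} by the Fredholm theory for the Neumann Laplacian in a domain with cylindrical outlets to infinity, in the spirit of \cite{Naz96,ZAA99}, and then to read off the refined asymptotics from a separation of variables in the ends $\Xi^{(i)}$.

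First I would set up the variational framework on $\mathcal H$. The constant function $1$ belongs to $\mathcal H$: since $\int_\Xi \rho^2\,d\xi<\infty$ (because $\rho^2=|\xi_i|^{-2}$ is integrable over each semi-infinite end) the constant is approximated in $\|\cdot\|_{\mathcal H}$ by cut-offs whose Dirichlet energy and weighted tails both vanish. The form $a(u,v):=\int_\Xi\nabla u\cdot\nabla v\,d\xi$ is bounded, and the right-hand functional is bounded as well, since
$$
\Bigl|\int_\Xi \widetilde F_k\,v\,d\xi\Bigr|\le \|\rho^{-1}\widetilde F_k\|_{L^2(\Xi)}\,\|\rho\,v\|_{L^2(\Xi)}\le \|\rho^{-1}\widetilde F_k\|_{L^2(\Xi)}\,\|v\|_{\mathcal H},
$$
which uses the hypothesis $\rho^{-1}\widetilde F_k\in L^2(\Xi)$. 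Testing \eqref{integr} with $v\equiv 1$ yields the necessity of \eqref{solvability}. For sufficiency I would pass to the quotient $\mathcal H/\mathbb R$: condition \eqref{solvability} makes the functional well defined there, while coercivity of $a$ on $\mathcal H/\mathbb R$ follows from a weighted Friedrichs--Hardy inequality. Concretely, splitting $v$ on each end into its transverse mean and the orthogonal part, the cross-sectional Poincaré inequality controls the orthogonal part (whose weight is bounded) by $\int|\nabla v|^2$, and a one-dimensional Hardy inequality on $(\ell_0,+\infty)$, with the boundary value at $\xi_i=\ell_0$ absorbed into the $L^2$-norm on the core $\Xi^{(0)}$ (where Poincaré and Rellich compactness apply), gives $\inf_{c\in\mathbb R}\|v-c\|_{\mathcal H}^2\le C\int_\Xi|\nabla v|^2\,d\xi$. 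Lax--Milgram on $\mathcal H/\mathbb R$ then produces a weak solution, unique up to an additive constant.

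Next I would determine the behaviour at infinity. Outside $\mathrm{supp}\,\widetilde F_k$ the solution is harmonic in each end $\Xi^{(i)}$ with homogeneous Neumann data on the lateral boundary, so separation of variables for the transverse spectral problem (the analogue of \eqref{spectral_problem} on the disk of radius $h_i(0)$) gives
$$
\widetilde N_k(\xi)=c^{(i)}+d^{(i)}\xi_i+\sum_{p\ge1}a_p^{(i)}\,\Theta_p^{(i)}(\overline\xi_i)\,e^{-\eta_p^{(i)}\xi_i},\qquad \xi\in\Xi^{(i)}.
$$
Membership in $\mathcal H$ forces $d^{(i)}=0$ and discards the growing exponential modes, because the mode $\xi_i$ has both infinite Dirichlet energy and infinite weighted norm. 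Hence $\widetilde N_k$ tends to a constant $c^{(i)}$ in each end, with remainder governed by the first positive transverse eigenvalue, i.e. $\mathcal O(\exp(-\gamma_i\xi_i))$ with $\gamma_i=\eta_1^{(i)}>0$; smoothness of the exponential modes (and interior elliptic regularity) allow the asymptotics to be differentiated. Here one uses that $\widetilde F_k$ has compact support in the ends, which in the present construction is ensured because the polynomial parts $\Psi_k^{(i)}$ were chosen, via the ODEs \eqref{omega_probl_k}, precisely to cancel the polynomially growing contributions to $\widetilde F_k$.

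Finally I would fix the additive constant: the solution being determined up to $c\mapsto c+\mathrm{const}$, I choose this constant so that $c^{(1)}=0$, which enforces the decay $\mathcal O(\exp(-\gamma_1\xi_1))$ in $\Xi^{(1)}$, makes the representative unique, and lets me set $\boldsymbol{\delta_k^{(2)}}:=c^{(2)}$ and $\boldsymbol{\delta_k^{(3)}}:=c^{(3)}$ to obtain \eqref{inner_asympt_general}. I expect the main obstacle to be the coercivity-modulo-constants step, namely the weighted Friedrichs--Hardy inequality on $\Xi$ that simultaneously handles the gradient-degenerate constant mode and the decay of the transverse mean in the unbounded ends; once this is secured, existence, the solvability criterion \eqref{solvability}, and the exponential asymptotics all follow from the standard cylindrical-end elliptic theory of \cite{Ko-Ol,Na-Pla,Naz96,ZAA99}.
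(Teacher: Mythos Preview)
Your approach is the one the paper points to (it does not give a self-contained proof of this proposition, merely citing \cite{Naz96,ZAA99,Ko-Ol,Na-Pla} after introducing the weighted space $\mathcal H$), and your sketch is correct: Lax--Milgram on $\mathcal H/\mathbb R$ with a weighted Hardy/Poincar\'e inequality gives existence iff \eqref{solvability} holds, and separation of variables in the cylindrical ends yields the asymptotics, with the free additive constant fixed by $c^{(1)}=0$.

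One small correction: for $k\ge 2$ the right-hand side $\widetilde F_k$ is \emph{not} compactly supported in the ends. In $\Xi^{(i)}$ for large $\xi_i$ one has, using \eqref{formula1}, that $(\Psi_k^{(i)})''\chi_i + \sum_m\mu_m N_{k-2-m} = \sum_m\mu_m\bigl(N_{k-2-m}-G^{(i)}_{k-2-m}\bigr)$, which is only exponentially decaying (by the inductive hypothesis, cf.\ Remark~\ref{rem_exp-decrease}), not zero. This does not harm your argument---the Poisson equation in the ends with an exponentially decaying source still forces exponential convergence to constants---but your phrase ``outside $\mathrm{supp}\,\widetilde F_k$ the solution is harmonic'' should be replaced accordingly. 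Note also that the bare hypothesis $\rho^{-1}\widetilde F_k\in L^2(\Xi)$ stated in the proposition is enough for existence and for convergence to constants, but the \emph{exponential} remainder in \eqref{inner_asympt_general} genuinely requires the exponential decay of $\widetilde F_k$ that holds in the concrete construction; you were right to flag this.
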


The constants $\boldsymbol{\delta_k^{(2)}}$ and $\boldsymbol{\delta_k^{(3)}}$ in (\ref{inner_asympt_general})
are defined with the formulas
\begin{equation}\label{const_d_0}
\boldsymbol{\delta_k^{(i)}}
 =      \int_{\Xi} \mathfrak{N}_i \, \widetilde{F}_k(\xi) \, d\xi,
\quad i=2,3, \ \  k\in\Bbb{N}_0,
\end{equation}
where $\mathfrak{N}_2$ and $\mathfrak{N}_3$ are special solutions to
the corresponding homogeneous problem
\begin{equation}\label{hom_probl}
  -\Delta_{\xi}\mathfrak{N} = 0 \ \ \text{in} \ \ \Xi, \qquad
  \partial_\nu \mathfrak{N} = 0 \ \ \text{on} \ \ \partial \Xi,
\end{equation}
for the problem (\ref{junc_probl_general}).

\begin{proposition}[see \cite{Mel_Klev_M2AS-2018}]\label{tverd2}
The  problem (\ref{hom_probl}) has two linearly independent solutions $\mathfrak{N}_2$ and $\mathfrak{N}_3$ that do not belong to the space
$ \mathcal{ H}$ and they have the following differentiable asymptotics:
\begin{equation}\label{inner_asympt_hom_solution_1}
\mathfrak{N}_2(\xi) = \left\{
\begin{array}{rl}
     - \dfrac{\xi_1}{\pi h_1^2(0)}
 +  \mathcal{ O}(\exp( - \gamma_1 \xi_1)) & \mbox{as} \ \ \xi_1\to+\infty,
 \\[3mm]
    C_2^{(2)} + \dfrac{\xi_2}{\pi h_2^2(0)}
 +  \mathcal{ O}(\exp( - \gamma_2 \xi_2)) & \mbox{as} \ \ \xi_2\to+\infty,
 \\[3mm]
    C_2^{(3)} + \mathcal{ O}(\exp( - \gamma_3 \xi_3)) & \mbox{as} \ \ \xi_3\to+\infty,
\end{array}
\right.
\end{equation}
\begin{equation}\label{inner_asympt_hom_solution_2}
\mathfrak{N}_3(\xi) = \left\{
\begin{array}{rl}
     - \dfrac{\xi_1}{\pi h_1^2(0)}
 +     \mathcal{ O}(\exp( - \gamma_1 \xi_1)) & \mbox{as} \ \ \xi_1\to+\infty,
 \\[3mm]
   C_3^{(2)} +  \mathcal{ O}(\exp( - \gamma_2 \xi_2)) & \mbox{as} \ \ \xi_2\to+\infty,
 \\[3mm]
    C_3^{(3)} + \dfrac{\xi_3}{\pi h_3^2(0)}
 +   \mathcal{ O}(\exp( - \gamma_3 \xi_3)) & \mbox{as} \ \ \xi_3\to+\infty,
\end{array}
\right.
\end{equation}

Any other solution to the homogeneous problem, which has polynomial growth at infinity, can be presented as a linear combination
$ \alpha_1 + \alpha_2 \mathfrak{N}_2 + \alpha_3 \mathfrak{N}_3.$
\end{proposition}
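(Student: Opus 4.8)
The plan is to treat the homogeneous problem \eqref{hom_probl} with the standard tools for elliptic problems in domains with cylindrical exits to infinity, as in \cite{Naz96,ZAA99}. First I would analyse an arbitrary polynomially growing solution inside each semibounded cylinder $\Xi^{(i)}$. Expanding in an orthonormal basis $\{\Theta^{(i)}_p\}_{p\ge0}$ of the cross-sectional Neumann Laplacian (as in \eqref{spectral_problem}, now on the disk $|\overline{\xi}_i|<h_i(0)$), a harmonic function obeying the lateral Neumann condition splits into modes: for $p\ge1$ the longitudinal factor solves $u''=(\eta^{(i)}_p)^2 u$ and is thus a combination of $e^{\pm\eta^{(i)}_p\xi_i}$, while the zeroth mode $\Theta^{(i)}_0\equiv\mathrm{const}$ gives $u''=0$, i.e. an affine term $a_i+b_i\xi_i$. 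Discarding the exponentially growing modes (they violate polynomial growth) shows that every admissible solution has, in each end, the form $a_i+b_i\xi_i+\mathcal{O}(\exp(-\gamma_i\xi_i))$ together with its derivatives, which already fixes the qualitative shape of \eqref{inner_asympt_hom_solution_1}--\eqref{inner_asympt_hom_solution_2}.

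Second, I would extract the single linear relation among the growth rates. Integrating $-\Delta\mathfrak{N}=0$ over the truncation $\Xi_R=\Xi\cap\{\xi_i<R,\ i=1,2,3\}$ and applying the divergence theorem, the lateral contributions vanish by the Neumann condition, while the three end-caps contribute the fluxes $b_i\,\pi h_i^2(0)$ (higher modes integrate to zero and the exponential remainders vanish as $R\to+\infty$). Hence $\sum_{i=1}^3 b_i\,\pi h_i^2(0)=0$, which is the \emph{only} constraint on $(b_1,b_2,b_3)$, so its solution set is two-dimensional.

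Third, for existence I would prescribe flux-balanced linear profiles and reduce to Proposition~\ref{tverd1}. To build $\mathfrak{N}_2$, set $\Phi(\xi)=-\frac{\xi_1}{\pi h_1^2(0)}\chi_1(\xi_1)+\frac{\xi_2}{\pi h_2^2(0)}\chi_2(\xi_2)$ with the cutoffs $\chi_i$ introduced in \eqref{new-solution}; then $\Phi$ is harmonic and satisfies the Neumann condition far out, so $F:=-\Delta\Phi$ has compact support, $\rho^{-1}F\in L^2(\Xi)$, and a direct flux computation gives $\int_\Xi F\,d\xi=-(-1+1+0)=0$, so the solvability condition \eqref{solvability} holds. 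Proposition~\ref{tverd1} then yields $\widetilde N\in\mathcal{H}$ solving $-\Delta\widetilde N=F$ with the asymptotics \eqref{inner_asympt_general}, and $\mathfrak{N}_2:=\Phi+\widetilde N$ is the required solution, the free additive constant being chosen to annihilate the constant term in the first end; $\mathfrak{N}_3$ is constructed the same way with the roles of the second and third ends interchanged. Since $\mathfrak{N}_2,\mathfrak{N}_3$ grow linearly, $\int_\Xi|\mathfrak{N}_j|^2\rho^2\,d\xi=+\infty$, so indeed $\mathfrak{N}_2,\mathfrak{N}_3\notin\mathcal{H}$, and their differing growth directions make them linearly independent.

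Finally, for completeness, taking $v=\mathfrak{N}$ as its own test function in \eqref{integr} with $\widetilde F=0$ forces $\nabla\mathfrak{N}=0$, so the energy kernel consists of constants only (as $\Xi$ is connected); this is one-dimensional. Given any polynomial-growth solution $\mathfrak{N}$, I subtract $\alpha_2\mathfrak{N}_2+\alpha_3\mathfrak{N}_3$ with $\alpha_2,\alpha_3$ chosen (using the flux constraint) to cancel its growth rates in all three ends; the remainder is bounded, hence lies in $\mathcal{H}$, hence equals a constant $\alpha_1$, giving $\mathfrak{N}=\alpha_1+\alpha_2\mathfrak{N}_2+\alpha_3\mathfrak{N}_3$ and confirming that the solution space is exactly three-dimensional. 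I expect the main obstacle to be the rigorous asymptotic decomposition underlying the first and third steps --- proving that a finite-energy $\mathcal{H}$-solution genuinely possesses the \emph{differentiable} asymptotics $C^{(i)}+\mathcal{O}(\exp(-\gamma_i\xi_i))$ in each end, with no surviving linear term and with termwise-differentiable exponential remainders. This is precisely the weighted-space (Kondratiev--Nazarov--Plamenevskii) machinery of \cite{Naz96,ZAA99}, and it is what legitimises both the flux computation and the passage from abstract polynomial growth to the concrete form \eqref{inner_asympt_hom_solution_1}--\eqref{inner_asympt_hom_solution_2}.
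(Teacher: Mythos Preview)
Your proposal is correct and follows exactly the standard route for such results in domains with cylindrical outlets; the paper itself does not prove this proposition but quotes it from \cite{Mel_Klev_M2AS-2018}, where the argument is precisely the one you sketch (mode decomposition in each end, the single flux constraint $\sum_i b_i\pi h_i^2(0)=0$ from the divergence theorem, construction of $\mathfrak{N}_2,\mathfrak{N}_3$ by subtracting flux-balanced cut-off profiles and invoking Proposition~\ref{tverd1}, and identification of the bounded kernel with the constants). Your closing remark is on point: the only nontrivial input is the differentiable exponential asymptotics of $\mathcal{H}$-solutions in each cylinder, and that is exactly what the weighted-space machinery of \cite{Naz96,Na-Pla,ZAA99} supplies.
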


\begin{remark}\label{remark_constant}
To obtain formulas (\ref{const_d_0}),  it is necessary to substitute
the functions $\widehat{N}_k, \mathfrak{N}_2$ and $\widehat{N}_k, \mathfrak{N}_3$ in  the second Green-Ostrogradsky formula
$$
\int_{\Xi_R} \big(\widehat{N} \, \Delta_\xi \mathfrak{N} - \mathfrak{N} \, \Delta_\xi \widehat{N}\big)\, d\xi = \int_{\partial \Xi_R}
\big(\widehat{N} \,\partial_{\nu_\xi} \mathfrak{N} - \mathfrak{N}\, \partial_{\nu_\xi}\widehat{N}\big)\, d\sigma_\xi
$$
respectively, and then pass to the limit as $R \to +\infty.$ Here $\Xi_R = \Xi \cap \{\xi : \ |\xi_i| < R, \ i=1, 2, 3\}.$
\end{remark}

\subsection{The limit spectral problem and problems for $\{\widetilde{w}_k\}$}\label{sub_3*4}
The problem (\ref{junc_probl_n})
if $k=0$ has the form
\begin{equation}\label{junc_probl_general-k=0}
 \left\{\begin{array}{rcll}
  -\Delta_{\xi}{{N}_0}(\xi) & = &
   0,                  &
   \quad \xi\in\Xi,
\\[2mm]
   \partial_{\boldsymbol{\nu}_\xi}{{N}_0}(\xi) & = &
   0,                                           &
   \quad \xi\in \partial \Xi,
\\[2mm]
   {N}_0(\xi)                                                   & \sim &
   w^{(i)}_{0}(0),                                                          &
   \quad \xi_i \to +\infty, \ \ \xi  \in \Xi^{(i)}, \quad i=1,2,3.
 \end{array}\right.
\end{equation}
It is ease to verify that $\boldsymbol{\delta_0^{(2)}}=\boldsymbol{\delta_0^{(3)}}=0$ and $\widehat{N}_0 \equiv 0.$
Thus, this problem has a solution in $\mathcal{H}$ if and only if
\begin{equation}\label{trans0}
w_0^{(1)} (0) = w_0^{(2)} (0) = w_0^{(3)} (0);
\end{equation}
in this case  \ $N_0 \equiv  \widetilde{N}_0 \equiv w_0^{(1)} (0).$

The solvability condition  (\ref{solvability}) for the problem (\ref{junc_probl_general}) with $k=1$ reads as follows:
\begin{equation}\label{transmisiont1}
  h_1^2 (0) \frac{d w_{0}^{(1)}}{dx_1} (0) +  h_2^2 (0) \frac{d w_{0}^{(2)}}{dx_2} (0) +  h_3^2 (0) \frac{d w_{0}^{(3)}}{dx_3} (0) =  0.
\end{equation}

Thus, for the functions $\{w_0^{(i)}\}_{i=1}^3$ that are the first  terms of the regular asymptotic expansions~(\ref{regul})
and for the first term $\mu_0$ of  the  expansion~\eqref{exp-EVl}, we obtain the spectral problem
\begin{equation}\label{main}
 \left\{\begin{array}{rclr}
  -  \dfrac{d}{d{x_i}}\left(h_i^2(x_i)\dfrac{d w_0^{(i)}}{d{x_i}}(x_i)\right) & = &
    \mu_0\, h_i^2(x_i)\, w_0^{(i)}(x_i), &                                x_i\in I_i, \quad i=1,2,3,
 \\[4mm]
    w_0^{(i)}(\ell_i) & = &
    0, &           i=1,2,3,
 \\[3mm]
    w_0^{(1)} (0) \ \, = \ \, w_0^{(2)} (0) & = &
    w_0^{(3)} (0), &
  \\[3mm]
    \sum\limits_{i=1}^3  h_i^2 (0) \dfrac{dw_{0}^{(i)}}{dx_i} (0) & = &
    0 &
 \end{array}\right.
\end{equation}
on the graph $\mathcal{I}= \overline{I_1}\cup \overline{I_2}\cup \overline{I_3}.$ The problem \eqref{main} is  called the {\it limit spectral problem} for (\ref{1.1}).

For functions
$$
\widetilde{\phi}(x)=\left\{
                                                  \begin{array}{ll}
                                                    \phi^{(1)}(x_1), &  \ \ x_1 \in \overline{I_1},\\
                                                    \phi^{(2)}(x_2), &  \ \ x_2 \in \overline{ I_2}, \\
                                                    \phi^{(3)}(x_3), &  \ \ x_3 \in \overline{I_3},
                                                  \end{array}
                                                \right.
$$
defined on the graph $\mathcal{I},$
we introduce the Sobolev space
\begin{equation}\label{space_H_0}
\mathcal{ H}_0 := \left\{ \widetilde{\phi}: \
\phi^{(i)} \in H^1(I_i), \ \ \phi^{(i)}(\ell_i) = 0, \ \ i=1, 2, 3, \ \ \hbox{and} \ \ \phi^{(1)}(0) = \phi^{(3)}(0) = \phi^{(3)}(0)
\right\}
\end{equation}
with the scalar product
$$
\langle \widetilde{\phi}, \widetilde{\psi}\rangle_0 := \sum_{i=1}^3  \int_{I_i}  h_i^2(x_i)\, \frac{d\phi^{(i)}}{d{x_i}} \, \frac{d\psi^{(i)}}{d{x_i}}\, dx_i,
\qquad \widetilde{\phi},\, \widetilde{\psi} \in \mathcal{ H}_0.
$$

\begin{definition}\label{def3_1}
A function $\widetilde{w}\in \mathcal{ H}_0 \setminus \{0\}$  is called  an eigenfunction  corresponding to the eigenvalue
$\mu$ of the  problem \eqref{main} if
\begin{equation}\label{limit_identity}
\langle \widetilde{w}, \widetilde{\phi} \rangle_0 = \mu \,
\left( \widetilde{w}, \widetilde{\phi} \right)_{\mathcal{ V}_0}
\qquad \forall \, \widetilde{\phi}  \in \mathcal{ H}_0,
  \end{equation}
where $\mathcal{ V}_{0}$ is the space $L^{2}(\mathcal{I})$
with the scalar product
$$
\left( \widetilde{u}, \widetilde{v} \right)_{\mathcal{ V}_0}:=
\sum_{i=1}^{3} \int_{I_i} h^2_i(x_i)\,  u^{(i)}(x_i) \,  v^{(i)}(x_i) \, dx_i .
$$
\end{definition}

Let us define an operator $ A_0: \mathcal{H}_{0}\mapsto
\mathcal{H}_{0}$  by the  equality
\begin{equation}\label{limit-operator1}
\langle A_{0} \widetilde{u} , \widetilde{v}\rangle_0 = \left( \widetilde{u}, \widetilde{v} \right)_{\mathcal{ V}_0}
 \quad \forall \ \widetilde{u}, \widetilde{v} \in \mathcal{ H}_{0}.
\end{equation}
Obviously,  the operator $A_0$ is self-adjoint, positive, and compact.
Then the problem (\ref{main}) is equivalent to the spectral problem
\ $A_{0} \widetilde{w} = \mu^{-1} \,\widetilde{w}$ \  in $\mathcal{ H}_{0}.$
In addition, it is easy to prove that each eigenvalue of the problem (\ref{main}) is simple.
Therefore, the eigenvalues of the problem (\ref{main})  form the sequence
\begin{equation}\label{EV_lim_prob}
0  <  \Lambda_1   < \Lambda_2 < \ldots < \Lambda_n  < \dots \to + \infty \quad {\rm as}
\quad   n \to +\infty.
\end{equation}
The corresponding eigenfunctions $ \{\widetilde{W}_n \}_{n \in \Bbb N} \subset  \mathcal{ H}_{0}$ can be uniquely
ortho\-nor\-ma\-liz\-ed as follows:
\begin{equation}\label{EF_lim_prob}
\left\langle \widetilde{W}_n, \widetilde{W}_m \right\rangle_{0}=  \delta_{n,m} , \quad n,\, m  \in \Bbb N;
\end{equation}
and
\begin{equation}\label{EF_addion_lim_prob}
\frac{d \, W^{(1)}_n(x_1)}{dx_1}\Big|_{x_1= \ell_1} > 0, \quad n\in \Bbb N.
\end{equation}

\medskip

Now let us write down the solvability condition  (\ref{solvability}) for the problem (\ref{junc_probl_general}) with any fixed $k \in \Bbb N, \ k \ge 2$.
First, note that thanks to \eqref{omega_probl_2}, \eqref{omega_probl_3} and \eqref{omega_probl_k}, we have
\begin{equation}\label{formula1}
  \Big(\Psi_{k}^{(i)}(\xi_i)\Big)^{\prime\prime} = - \sum_{j=0}^{k-2} \frac{\xi_i^j}{j!} \frac{d^j}{dx_i^j} \Bigg(\sum_{m=0}^{k-2-j}
\mu_m\, w^{(i)}_{k-2-j-m}(x_i)\Bigg)\Big|_{x_i =0+} = - \sum_{m=0}^{k-2} \mu_m  \, G^{(i)}_{k-2-m}(\xi_i),
\end{equation}
where
\begin{equation}\label{formula2}
G^{(i)}_k(\xi_i)  := w_{k}^{(i)}(0) + \Psi_k^{(i)}(\xi_i), \quad \xi\in\Xi^{(i)},
\quad i=1,2,3, \quad k\in \Bbb N_0.
\end{equation}
Taking into account  \eqref{formula1}, from (\ref{solvability})  we deduce  that
\begin{gather*}
0=   \sum\limits_{i=1}^3
    \Bigg[  \int_{\Upsilon_i(0)} \Big(\Psi_{k}^{(i)}(\xi_i)\Big)^{\prime}\Big|_{\xi_i = \ell_0} d\overline{\xi}_i +
\int_{\Xi^{(i)}} \sum_{m=0}^{k-2} \mu_m \, \Big( N_{k-2-m}(\xi) - G^{(i)}_{k-2-m}(\xi_i)\Big)  \,  d\xi \Bigg]
\\
+ \int_{\Xi^{(0)}} \varrho_0(\xi) \,  \sum_{m=0}^{k-2} \mu_m \, N_{k-2-m}(\xi) \, d\xi,
\end{gather*}
whence, using \eqref{formula1} again, we get
\begin{equation}\label{transmisiont1+}
    \sum\limits_{i=1}^3  h_i^2 (0) \frac{d w_{k-1}^{(i)}}{dx_i} (0)
 =  \boldsymbol{d_{k-1}^*},
\end{equation}
where
\begin{multline}\label{const_d_*}
\boldsymbol{d_k^*}
 =  \sum\limits_{i=1}^3 h_i^2 (0) \, \sum\limits_{j=1}^k \frac{\ell_0^j}{j!} \sum_{m=0}^{k-j } \mu_m \,
    \dfrac{d^{j-1} w_{k-j-m}^{(i)}}{d x_i^{j-1}} (0)
-  \frac{1}{\pi}  \sum_{m=0}^{k-1} \mu_m \, \sum\limits_{i=1}^3 \int_{\Xi^{(i)}}\Big( N_{k-1-m}(\xi) - G^{(i)}_{k-1-m}(\xi_i)\Big)  \,  d\xi
\\
     -  \frac{1}{\pi} \int_{\Xi^{(0)}} \varrho_0(\xi) \sum_{m=0}^{k-1} \mu_m \, N_{k-1-m}(\xi)  \, d\xi,
\quad k\in\Bbb N.
\end{multline}
In particular, since $N_0 \equiv G^{(1)}_0 \equiv G^{(2)}_0 \equiv G^{(3)}_0  \equiv w^{(1)}(0),$
\begin{equation}\label{const_d_*1}
\boldsymbol{d_1^*} = \mu_0 \, w^{(1)}_0(0)  \Bigg(\sum\limits_{i=1}^3 \ell_0 \, h_i^2 (0)
-  \frac{1}{\pi} \int_{\Xi^{(0)}} \varrho_0(\xi) \, d\xi \Bigg).
\end{equation}
Recall that $\boldsymbol{d_0^*}=0.$

Hence, if the functions $\{ w_{k-1}^{(i)}\}_{i=1}^3$ satisfy (\ref{transmisiont1+}), then there exist a weak solution $\widetilde{N}_k$ to  the problem~(\ref{junc_probl_general}). According to Proposition  \ref{tverd1}, it can be chosen in a unique way to guarantee the
asymptotics~(\ref{inner_asympt_general}).
 \ But, till now, we do not take into account the conditions (\ref{junc_probl_general+cond}).
To satisfy the first condition, we represent a weak solution to the problem (\ref{junc_probl_general}) in the following form:
$$
\widetilde{N}_k = w_k^{(1)}(0) + \widehat{N}_k.
$$
Taking into account the asymptotics (\ref{inner_asympt_general}), we have to put
\begin{equation}\label{trans1}
w_k^{(1)}(0) =  w_k^{(2)}(0) - \boldsymbol{\delta_k^{(2)}} = w_k^{(3)}(0) - \boldsymbol{\delta_k^{(3)}},  \quad k \in \Bbb N,
\end{equation}
where $\boldsymbol{\delta_k^{(2)}}$ and $\boldsymbol{\delta_k^{(3)}}$ are defined with the formulas \eqref{const_d_0}.
As a result, we get the solution to the problem~(\ref{junc_probl_n}) with the following asymptotics:
\begin{equation}\label{inner_asympt}
{N}_{k}(\xi)
 =  \omega_{k}^{(i)} (0) + \Psi_k^{(i)} (\xi)
 +  \mathcal{ O}(\exp(-\gamma_i\xi_i))
\quad \mbox{as} \ \ \xi_i\to+\infty,  \ \  \xi  \in \Xi^{(i)},  \quad i=1,2,3.
\end{equation}

\begin{remark}\label{rem_exp-decrease}
Thus, the functions $\{{N}_{k} - G^{(i)}_k\}_{k\in \Bbb N}$ are exponentially decreasing  as $\xi_i \to +\infty, \  \xi  \in \Xi^{(i)},$ $i=1,2,3.$ This means that the integrals over $\Xi^{(i)}$ in \eqref{const_d_*} exist.
\end{remark}

Relations (\ref{trans1}) and (\ref{transmisiont1+}) are the  first and second transmission conditions for the functions $\{w_k^{(i)}\}$ at $x=0.$
Thus, with regard to \eqref{omega_probl_k},  the coefficients  $\{w_k^{(1)}, \, w_k^{(2)}, \, w_k^{(3)} \}$ and $\mu_k$ can be determined from the problem
\begin{equation}\label{omega_probl*}
 \left\{\begin{array}{rclr}
  -  \dfrac{d}{d{x_i}}\left(h_i^2(x_i)\dfrac{d w_k^{(i)}}{d{x_i}}(x_i)\right) & = &
  \mu_0 \, h_i^2(x_i) \,  w_{k}^{(i)} (x_i) +
\displaystyle{\sum_{m=1}^{k}} \mu_m \, h_i^2(x_i) \,  w_{k-m}^{(i)} (x_i), &   x_i\in I_i, \  i=1,2,3,
 \\[3mm]
    w_k^{(i)}(\ell_i) & = &
    0, \quad i=1,2,3, &
 \\[3mm]
    w_k^{(1)} (0) \ \, = \ \, w_k^{(2)} (0) - \boldsymbol{\delta_k^{(2)}} & = &
                                   w_k^{(3)} (0) - \boldsymbol{\delta_k^{(3)}}, &
 \\[3mm]
    \sum\limits_{i=1}^3 h_i^2 (0) \,\dfrac{d w_{k}^{(i)}}{dx_i} (0) & = &
   \boldsymbol{d_k^*}.&
 \end{array}\right.
\end{equation}
To solve the problem (\ref{omega_probl*}),  we make the following substitutions:
\begin{equation}\label{sunstitutions}
\phi_k^{(1)}(x_1)=\omega_k^{(1)}(x_1),\qquad \phi_k^{(i)}(x_i) = \omega_k^{(i)}(x_i) - \boldsymbol{\delta_k^{(i)}} \, \frac{\ell_i - x_i}{\ell_i},
\quad i= 2, 3.
\end{equation}
As a result for the  functions $\{\phi_k^{(i)}\}_{i=1}^3,$ we get the problem
\begin{equation}\label{tilde-omega_probl*}
 \left\{\begin{array}{rclr}
  - \dfrac{d}{d{x_i}}\left(h_i^2(x_i)\dfrac{d\phi_k^{(i)}}{d{x_i}}(x_i)\right) - \mu_0 h_i^2(x_i)   \phi_{k}^{(i)} (x_i)  & = &
    \Phi_k^{(i)}(x_i; \mu_1,\ldots,\mu_{k-1}) + \mu_k  h_i^2(x_i)  w_{0}^{(i)} (x_i) , &     i\in I_i,
 \\[3mm]
    \phi_k^{(i)}(\ell_i) & = &
    0,  \quad \  i=1,2,3,&
 \\[3mm]
    \phi_k^{(1)} (0) \ \, = \ \, \phi_k^{(2)} (0)  & = &
                                   \phi_k^{(3)} (0), &
 \\[3mm]
    \sum\limits_{i=1}^3 h_i^2 (0) \,\dfrac{d\phi_{k}^{(i)}}{dx_i} (0) & = &
{\bf D_k}
   , &
 \end{array}\right.
\end{equation}
where
\begin{gather}
{\bf D_k} = \boldsymbol{d_k^*} + \dfrac{\boldsymbol{\delta_k^{(2)}} h^2_2(0)}{\ell_2}
 + \dfrac{\boldsymbol{\delta_k^{(3)}} h^2_3(0)}{\ell_3} \label{D_k}
\\
{\Phi}_k^{(1)}(x_1; \mu_1,\ldots,\mu_{k-1}) =  \displaystyle{\sum_{m=1}^{k-1}} \mu_m \, h_1^2(x_1) \,  w_{k-m}^{(1)} (x_1) \notag
 \\
  {\Phi}_k^{(i)}(x_i; \mu_1,\ldots,\mu_{k-1}) = \displaystyle{\sum_{m=1}^{k-1}} \mu_m \, h_i^2(x_i) \,  w_{k-m}^{(i)} (x_i) -
\dfrac{2 \boldsymbol{\delta_k^{(i)}}}{\ell_i} h_i(x_i) \, h_i'(x_i) +  \mu_0 \, h_i^2(x_i) \, \boldsymbol{\delta_k^{(i)}} \, \frac{\ell_i - x_i}{\ell_i}, \ \ i=2, 3.
\notag
 \end{gather}

\begin{definition}
A function $\widetilde{\phi}_k\in \mathcal{ H}_0$ is called a weak solution to the problem \eqref{tilde-omega_probl*} if
\begin{multline}\label{def_3_2}
  \langle \widetilde{\phi}_k - \mu_0 \,A_0 \widetilde{\phi}_k \, , \,\widetilde{\psi}\rangle_0 = \sum_{i=1}^3 \int_0^{\ell_i} \Phi_k^{(i)}(x_i; \mu_1,\ldots,\mu_{k-1}) \, \psi_i(x_i)\, dx_i - {\bf D_k} \,   \psi_1(0)
\\
+  \mu_k  \sum_{i=1}^3 \int_0^{\ell_i} h_i^2(x_i) \, w_{0}^{(i)} (x_i) \, \psi_i(x_i)\, dx_i  \quad \forall \, \widetilde{\psi} \in \mathcal{ H}_0.
\end{multline}
\end{definition}

  Since $\frac{1}{\mu_0}$ is an eigenvalue of the operator $A_0$ defined by \eqref{limit-operator1} and
the linear functional over the space $\mathcal{ H}_0$ in the right-hand side of the identity \eqref{def_3_2} is bounded,
we can apply the Fredholm theory   to find the unique weak solution $\widetilde{\phi}_k$ and the number $\mu_k.$
Taking into account that the spectrum  of $A_0$ is simple, we can state that the problem (\ref{tilde-omega_probl*})
has a weak solution if and only if
\begin{equation}\label{Fredholm}
  \sum_{i=1}^3 \int_0^{\ell_i} \Phi_k^{(i)}(x_i; \mu_1,\ldots,\mu_{k-1}) \, w_{0}^{(i)} (x_i) \, dx_i - {\bf D_k}\,   w_{0}^{(1)}(0)
+  \mu_k  \sum_{i=1}^3 \int_0^{\ell_i} h_i^2(x_i) \, \big(w_{0}^{(i)} (x_i)\big)^2 \, dx_i =0.
\end{equation}
From \eqref{Fredholm} we define the number $\mu_k.$ Thus, there exists a unique weak solution to the problem (\ref{tilde-omega_probl*})
such that
\begin{equation}\label{ortho_solution}
  \langle \widetilde{\phi}_k, \widetilde{w}_0 \rangle_0 =0.
\end{equation}
Then, with the help of  \eqref{sunstitutions} we find  $\{w_k^{(1)}, \, w_k^{(2)}, \, w_k^{(3)} \}.$
We will do this in detail in the next subsection.

\subsection{Scheme of the construction of  asymptotic expansions}\label{justification}

{\it Step 1.}
We know that the first term $\mu_0$ of  the  expansion~\eqref{exp-EVl} must be an eigenvalue of the  limit spectral problem (\ref{main})
and  $\{w_0^{(i)}\}_{i=1}^3,$ which are the first  terms of the regular ansatzes ~(\ref{regul}), form the corresponding eigenfunction $\widetilde{w}_0 \in \mathcal{H}_0.$  So let
$$
\mu_0 = \Lambda_n, \qquad  \widetilde{w}_0 = \widetilde{W}_n
$$
for some index $n\in \Bbb N,$ where $\Lambda_n$ is the $n$-th eigenvalue from the sequence  \eqref{EV_lim_prob} and
$\widetilde{W}_n$ is the corresponding eigenfunction satisfying  conditions \eqref{EF_lim_prob} and \eqref{EF_addion_lim_prob}.

Next we  determine the first term $N_0$ of the inner asymptotic expansion (\ref{junc}); it is a solution to the problem (\ref{junc_probl_general-k=0}) and $N_0=W_n^{(1)}(0).$

Then we can uniquely define the solution ${u}_{2,n}^{(i)}$ to the problem (\ref{regul_probl_2}) for each index $i=1,2,3,$
since the solvability condition for this problem is satisfied (see (\ref{omega_probl_2})).

Now with the help of formulas (\ref{view_solution}), we determine the first terms $\Pi_{2,n}^{(i)}, \ i=1,2,3$ of the
boundary-layer expansions (\ref{prim+}), as solutions to the problems  (\ref{prim+probl}) that can be rewritten as follows:
\begin{equation}\label{new_prim+probl_2}
 \left\{\begin{array}{rcll}
  -\Delta_{\xi_i^*, \overline{\xi}_i} \Pi_{2,n}^{(i)}(\xi_i^*,\overline{\xi}_i) & =
   & 0,
   & \xi_i^*\in(0,+\infty), \quad \overline{\xi}_i\in\Upsilon_i(\ell_i),
   \\[2mm]
  -\partial_{\nu_{\overline{\xi}_i}} \Pi_{2,n}^{(i)}(\xi_i^*,\overline{\xi}_i) & =
   & 0,
   & \xi_i^*\in(0,+\infty), \quad \overline{\xi}_i\in\partial\Upsilon_i(\ell_i),
   \\[2mm]
  \Pi_{2,n}^{(i)}(0,\overline{\xi}_i) & =
   & - u_{2,n}^{(i)} (\ell_i,\overline{\xi}_i) ,
   & \overline{\xi}_i\in\Upsilon_i(\ell_i),
   \\[2mm]
  \Pi_{2,n}^{(i)}(\xi_i^*,\overline{\xi}_i) & \to
   & 0,
   & \xi_i^*\to+\infty, \quad \overline{\xi}_i\in\Upsilon_i(\ell_i),
 \end{array}\right.
\end{equation}

{\it Step 2.}
The second terms $\{w_{1,n}^{(i)}\}_{i=1}^3$ of the regular asymptotics (\ref{regul}) are founded  from the problem (\ref{omega_probl*}) that looks now as follows:
\begin{equation}\label{omega_probl_1}
 \left\{\begin{array}{rclr}
  -  \dfrac{d}{d{x_i}}\left(h_i^2(x_i)\dfrac{d w_{1,n}^{(i)}}{d{x_i}}(x_i)\right) & = &
  \Lambda_n \, h_i^2(x_i) \,  w_{1,n}^{(i)} (x_i) +
 \mu_{1,n} \, h_i^2(x_i) \,  W_{n}^{(i)} (x_i), &   x_i\in I_i, \  i=1,2,3,
 \\[3mm]
    w_{1,n}^{(i)}(\ell_i) & = &
    0, \quad i=1,2,3, &
 \\[3mm]
    w_{1,n}^{(1)} (0) \ \, = \ \, w_{1,n}^{(2)} (0) - \boldsymbol{\delta_{1,n}^{(2)}} & = &
                                   w_{1,n}^{(3)} (0) - \boldsymbol{\delta_{1,n}^{(3)}}, &
 \\[3mm]
    \sum\limits_{i=1}^3 h_i^2 (0) \,\dfrac{d w_{1,n}^{(i)}}{dx_i} (0) & = &
   \boldsymbol{d_{1,n}^*}.&
 \end{array}\right.
\end{equation}
Here, the constants $\boldsymbol{\delta_{1,n}^{(2)}}$ and $\boldsymbol{\delta_{1,n}^{(3)}}$ are uniquely determined by the
formula  \eqref{const_d_0} with $k=1$ and
\begin{equation}\label{delta_1}
\boldsymbol{\delta_{1,n}^{(i)}}
 =  \sum\limits_{j=1}^3 \dfrac{d W_n^{(j)}}{dx_j}(0) \int_{\Xi^{(j)}} \mathfrak{N}_i(\xi)
        \Big( \xi_j \chi_j^{\prime\prime}(\xi_j) + 2 \chi_j^{\prime}(\xi_j) \Big) \, d\xi,
\quad i=2,3;
\end{equation}
\begin{equation}\label{d_1^*}
\boldsymbol{d_{1,n}^*}
 =  \Lambda_n \, W_{n}^{(1)}(0) \left(  \sum\limits_{i=1}^3  \ell_0 h_i^2 (0)
     - \frac{1}{\pi} \int_{\Xi^{(0)}} \varrho_0(\xi)  \, d\xi \right)
\end{equation}
(see \eqref{const_d_*1}).
We can reduce the problem \eqref{omega_probl_1} to the corresponding problem \eqref{tilde-omega_probl*} with $k=1$ and apply the
Fredholm theory. Taking into account \eqref{EF_lim_prob}, \eqref{D_k} and \eqref{d_1^*}, we get from \eqref{Fredholm}  that
 \begin{equation}\label{mu_1}
 \mu_{1,n}   =  \left(\Lambda_n \, W_{n}^{(1)}(0)\right)^2  \left(  \sum\limits_{i=1}^3  \ell_0 h_i^2 (0)
     -  \frac{1}{\pi} \int\limits_{\Xi^{(0)}} \varrho_0(\xi)  \, d\xi \right)
 + \left(\dfrac{\boldsymbol{\delta_{1,n}^{(2)}} h^2_2(0)}{\ell_2}
 + \dfrac{ \boldsymbol{\delta_{1,n}^{(3)}} h^2_3(0)}{\ell_3}\right) \,   \Lambda_n W_{n}^{(1)}(0) .
\end{equation}

Thus, there exists a unique weak solution to the problem (\ref{tilde-omega_probl*}) with $k=1$
such that
\begin{equation}\label{ortho_solution}
  \langle \widetilde{\phi}_{1,n}, \widetilde{W}_n \rangle_0 =0.
\end{equation}
Then, with the help of the substitutions \eqref{sunstitutions} we find the coefficients  $\{w_1^{(1)}, \ w_1^{(2)}, \ w_1^{(3)} \}.$

Having  $\{w_1^{(i)}\}_{i=1}^3,$ we can uniquely find the seconds terms of the regular asymptotics $\{{u}_{3,n}^{(i)}\}_{i=1}^3$ (series (\ref{regul})) from
the problem \eqref{regul_probl_3} and terms $\{\Pi_{3,n}^{(i)}\}_{i=1}^3$  of the boundary asymptotics (\ref{prim+}) from the problems
\begin{equation}\label{new_prim+probl_3}
 \left\{\begin{array}{rcll}
  -\Delta_{\xi_i^*, \overline{\xi}_i}
   \Pi_{3,n}^{(i)}(\xi_i^*,\overline{\xi}_i) & =
   & 0,
   & \xi_i^*\in(0,+\infty), \quad \overline{\xi}_i\in\Upsilon_i(\ell_i),
   \\[2mm]
  -\partial_{\nu_{\overline{\xi}_i}}
   \Pi_{3,n}^{(i)}(\xi_i^*,\overline{\xi}_i) & =
   & 0,
   & \xi_i^*\in(0,+\infty), \quad \overline{\xi}_i\in\partial\Upsilon_i(\ell_i),
   \\[2mm]
  \Pi_{3,n}^{(i)}(0,\overline{\xi}_i) & =
   & - u_{3,n}^{(i)} (\ell_i,\overline{\xi}_i),
   & \overline{\xi}_i\in\Upsilon_i(\ell_i),
   \\[2mm]
  \Pi_{3,n}^{(i)}(\xi_i^*,\overline{\xi}_i) & \to
   & 0,
   & \xi_i^*\to+\infty, \quad \overline{\xi}_i\in\Upsilon_i(\ell_i),
 \end{array}\right.
\end{equation}
$i=1, 2, 3,$respectively.

The second term  ${N}_{1,n}$ of the inner asymptotic expansion (\ref{junc}) is a  unique solution to the problem~(\ref{junc_probl_n})
that looks now as follows:
\begin{equation}\label{new_junc_probl_1}
 \left\{\begin{array}{rcll}
  -\Delta_{\xi}{N_{1,n}}(\xi) & = &
   0,                         &
   \quad \xi\in\Xi,
 \\[2mm]
   \partial_{\nu_\xi}{N_{1,n}}(\xi) & = &
   0,                               &
   \quad \xi\in \partial \Xi,
 \\[1mm]
   N_{1,n}(\xi)                                                               & \sim &
   w^{(i)}_{1,n}(0) + \xi_i \dfrac{dW_{n}^{(i)}}{dx}(0),                  &
   \quad \xi_i \to +\infty, \ \ {\xi}_i \in \Xi^{(i)}, \quad i=1,2,3.
 \end{array}\right.
\end{equation}
Its solvability condition   is satisfied due to the second  Kirchhoff  condition in the problem \eqref{main} for the eigenfunction $\widetilde{W}_n.$

Thus, we have uniquely determined the first two terms in expansions (\ref{regul}), (\ref{prim+}), (\ref{junc}) and \eqref{exp-EVl}.

\medskip

{\it Inductive step.}
Assume that we have determined the coefficients $w_{1,n}^{(i)},\ldots,w_{k-1,n}^{(i)},$
$u_{2,n}^{(i)},\ldots,u_{k+1,n}^{(i)}$ of the series (\ref{regul}),
coefficients
$\Pi^{(i)}_{2,n},\ldots,\Pi^{(i)}_{k+1,n}$ of the series (\ref{prim+}),
coefficients
${N}_{1,n},\ldots,{N}_{k-1,n}$ of the series (\ref{junc}), constants $\boldsymbol{\delta_{1,n}^{(i)}},\ldots,\boldsymbol{\delta_{k-1,n}^{(i)}}, \ i=2, 3,$
$\boldsymbol{d_{1,n}^*},\ldots,\boldsymbol{d_{k-1,n}^*},$ and coefficients $\mu_{1,n}\ldots,\mu_{k-1,n}$ of  \eqref{exp-EVl}.

Then we can determine  the constants $\boldsymbol{\delta_{k,n}^{(2)}}, \, \boldsymbol{\delta_{k,n}^{(3)}}$ (see (\ref{const_d_0})) in the first  transmission condition and the constant $\boldsymbol{ d_{k,n}^*}$ (see \eqref{const_d_*}) in the second transmission conditions of the problem (\ref{omega_probl*}).
We reduce the problem (\ref{omega_probl*}) to the problem \eqref{tilde-omega_probl*} and
from \eqref{Fredholm} we define the number
\begin{equation}\label{mu_k}
  \mu_{k,n} = {\bf D_k} \,  \Lambda_n  W_{n}^{(1)}(0)
-  \Lambda_n  \sum_{i=1}^3 \int_0^{\ell_i} \Phi_{k,n}^{(i)}(x_i; \mu_{1,n},\ldots,\mu_{k-1,n}) \, W_{n}^{(i)} (x_i) \, dx_i
\end{equation}
of the series \eqref{exp-EVl}. This means that  there exists a unique weak solution to the problem (\ref{tilde-omega_probl*})
such that
\begin{equation}\label{ortho_solution}
  \langle \widetilde{\phi}_{k,n}, \widetilde{W}_n \rangle_0 =0.
\end{equation}
Then, with the help of  \eqref{sunstitutions} we find the
solution $\{w_{k,n}^{(i)}\}_{i=1}^3$ to the problem (\ref{omega_probl*}).

The coefficients $u_{k+2,n}^{(i)}, \ i=1,2,3,$ are determined as solutions to the problems
\eqref{regul_probl_k}. The  coefficients $\Pi_{k+2,n}^{(i)}, \ i=1,2,3,$ of the boundary asymptotic expansions (\ref{prim+}) are solutions to the problems (\ref{prim+probl}) with the boundary condition
$$
\Pi_{k+2,n}^{(i)}(0,\overline{\xi}_i)  =  - u_{k+2,n}^{(i)} (\ell_i,\overline{\xi}_i),
   \quad  \overline{\xi}_i\in\Upsilon_i(\ell_i).
$$

Finally, we find the coefficient ${N}_{k,n}$ of the inner asymptotic expansion (\ref{junc}),
which is a unique solution to the problem (\ref{junc_probl_n}). The corresponding  solvability condition  is  the second transmission  condition
 for the functions $\{w_{k-1,n}^{(i)}\}_{i=1}^3.$

Thus, we  can successively determine all coefficients of series  (\ref{regul}), (\ref{prim+}), (\ref{junc}) and  \eqref{exp-EVl}.

\subsection{Justification}\label{Sec_justification}

Using  (\ref{regul}), (\ref{prim+}), and (\ref{junc}),  we construct the series
\begin{equation}\label{asymp_expansion}
    \sum\limits_{k=0}^{+\infty} \varepsilon^{k}
    \Big(
    \overline{u  }_{k,n} (x; \, \varepsilon, \, \gamma)
  + \overline{\Pi}_{k,n} (x; \, \varepsilon)
  + \overline{N  }_{k,n} (x; \, \varepsilon, \, \gamma)
    \Big),
\quad x\in\Omega_\varepsilon,
\end{equation}
where
$$
\overline{u}_{k,n} (x; \, \varepsilon, \, \gamma)
 := \sum\limits_{i=1}^3 \chi_{\ell_0}^{(i)} \left(\frac{x_i}{\varepsilon^\gamma}\right)
    \left( u_{k,n}^{(i)} \left( x_i, \frac{\overline{x}_i}{\varepsilon} \right)
 +  w_{k,n}^{(i)} (x_i) \right),
 \quad ( u_{0,n}\equiv u_{1,n} \equiv 0, \ \ w_{0,n}^{(i)} \equiv   W_{n}^{(i)} ),
$$
$$
\overline{\Pi}_{k,n} (x; \, \varepsilon)
 := \sum\limits_{i=1}^3 \chi_\delta^{(i)} (x_i) \,
    \Pi_{k,n}^{(i)} \left( \frac{\ell_i -x_i}{\varepsilon}, \frac{\overline{x}_i}{\varepsilon} \right),
 \quad ( \Pi_{0,n}\equiv \Pi_{1,n} \equiv 0 ),
$$
$$
\overline{N}_{k,n} (x; \, \varepsilon, \, \gamma)
 := \left(1 - \sum\limits_{i=1}^3 \chi_{\ell_0}^{(i)} \left(\frac{x_i}{\varepsilon^\gamma}\right) \right)
    N_{k,n} \left( \frac{x}{\varepsilon} \right),
 \quad \left( N_{0,n} \equiv W_n^{(1)}(0) \right),
$$
$\gamma$ is a fixed number from the interval $(\frac23, 1),$ $\chi_{\ell_0}^{(i)}, \ \chi_\delta^{(i)}$ are smooth cut-off functions defined by
\begin{equation}\label{cut-off-functions}
\chi_{\ell_0}^{(i)} (x_i) =
\left\{\begin{array}{ll}
1, &  \ \ x_i \ge 3 \, \ell_0,
\\
0, &  \ \ x_i \le 2 \, \ell_0,
\end{array}\right.
\quad
\chi_\delta^{(i)}(x_i)=
\left\{\begin{array}{ll}
1, & \ \ x_i \ge 1 -  \delta,
\\
0, &  \ \ x_i \le 1 - 2\delta,
\end{array}\right.
\quad i=1,2,3,
\end{equation}
and $\delta$ is a  fixed sufficiently small positive number.

Denote by
\begin{equation}\label{aaN}
U^{(M)}_{n}(x;\varepsilon)
 :=  \sum\limits_{k=0}^{M} \varepsilon^{k}
    \Big(
    \overline{u  }_k (x; \, \varepsilon, \, \gamma)
  + \overline{\Pi}_k (x; \, \varepsilon)
  + \overline{N  }_k (x; \, \varepsilon, \, \gamma)
    \Big),
\quad x\in\Omega_\varepsilon,
\end{equation}
 the partial sum of $(\ref{asymp_expansion}),$ where $M\in \Bbb N, \, M \ge 3,$  and by
\begin{equation}\label{part_EV}
 \mathcal{L}^{(M)}_{n}(\varepsilon) := \Lambda_n + \sum\limits_{k=1}^{M}\varepsilon^k \mu_{k,n}
\end{equation}
 the partial sum of \eqref{exp-EVl}. Obviously, $ U^{(M)}_{n} \in \mathcal{H}_\varepsilon.$

 It is easy to calculate that for each $i\in \{1, 2, 3\}$ and $k\in \Bbb N_0$ (the index $n$ is omitted)
 $$
  \Delta_x\Bigg(\chi_{\ell_0}^{(i)}\Big(\frac{x_i}{\varepsilon^\gamma}\Big)
    \left(u_{k}^{(i)} \left( x_i, \frac{\overline{x}_i}{\varepsilon} \right)
 +  w_{k}^{(i)} (x_i)\right) +
 \Big(1 - \chi_{\ell_0}^{(i)} \left(\frac{x_i}{\varepsilon^\gamma}\right)\Big)
    N_{k} \left( \frac{x}{\varepsilon} \right)  + \chi_\delta^{(i)} (x_i)
    \Pi_{k}^{(i)} \left( \frac{\ell_i -x_i}{\varepsilon}, \frac{\overline{x}_i}{\varepsilon} \right)\Bigg)
 $$
 $$
= \chi_{\ell_0}^{(i)}\Big(\frac{x_i}{\varepsilon^\gamma}\Big)
 \left( \varepsilon^{-2} \Delta_{\overline{\xi}_i}u_{k}^{(i)}\left( x_i,\overline{\xi}_i\right)\Big|_{\overline{\xi}_i = \frac{\overline{x}_i}{\varepsilon}} + \partial^2_{x_i^2}u_{k}^{(i)} \left( x_i, \frac{\overline{x}_i}{\varepsilon} \right)
 + \frac{d^2w_{k}^{(i)} (x_i)}{dx_i^2} \right)
 $$
 $$
 +\,  \Big(1 - \chi_{\ell_0}^{(i)} \left(\frac{x_i}{\varepsilon^\gamma}\right)\Big) \varepsilon^{-2}
    \Delta_{\xi} N_{k}(\xi)\Big|_{\xi= \frac{x}{\varepsilon}}
 $$
 $$
 - \, 2 \varepsilon^{-1} \left(\chi_\delta^{(i)} (x_i) \right)'
   \partial_{\xi_i^*} \Pi_{k}^{(i)} \left( \xi_i^*, \frac{\overline{x}_i}{\varepsilon} \right)\Big|_{\xi_i^*=\frac{\ell_i -x_i}{\varepsilon}}
   + \left(\chi_\delta^{(i)} (x_i) \right)'' \Pi_{k}^{(i)} \left( \frac{\ell_i -x_i}{\varepsilon}, \frac{\overline{x}_i}{\varepsilon} \right)
 $$
  \begin{equation}\label{calcul1}
  + \varepsilon^{-\gamma} \frac{\partial}{\partial x_i} \left( \Big(\chi_{\ell_0}^{(i)}\Big)'\Big(\frac{x_i}{\varepsilon^\gamma}\Big)
       \,  \Big(  w_{k}^{(i)} (x_i) - N_{k} \Big( \frac{x}{\varepsilon} \Big) \Big) \right)
    + \varepsilon^{-\gamma} \Big(\chi_{\ell_0}^{(i)}\Big)'\Big(\frac{x_i}{\varepsilon^\gamma}\Big)
   \, \frac{\partial}{\partial x_i} \left(w_{k}^{(i)} (x_i) - N_{k} \Big( \frac{x}{\varepsilon} \Big)  \right)
 \end{equation}
and
\begin{equation}\label{calcul2}
\partial_\nu  \overline{u  }_k (x; \, \varepsilon, \, \gamma) =  \chi_{\ell_0}^{(i)}\Big(\frac{x_i}{\varepsilon^\gamma}\Big)
\dfrac{\varepsilon h_i^\prime (x_i)}{\sqrt{1 + \varepsilon^2 |h_i^\prime (x_i)|^2 \,}} \sum_{k=m-1}^{m} \varepsilon^k
 \left( \frac{\partial u_{k}^{(i)}}{\partial x_i}\Big( x_i, \frac{\overline{x}_i}{\varepsilon} \Big)
 +  \frac{d w_{k}^{(i)}}{dx_i} (x_i) \right) \quad \text{on} \ \ \Gamma_\varepsilon^{(i)}.
  \end{equation}

Substituting $ U^{(M)}_{n}$ and $\mathcal{L}^{(M)}_{n}(\varepsilon)$ $(M \ge 3)$ into the problem~(\ref{1.1}) in place of $u^\varepsilon$ and $\lambda(\varepsilon)$
respectively and  taking into account  relations (\ref{main}), \eqref{omega_probl*}, \eqref{regul_probl_2}, \eqref{regul_probl_3}, \eqref{regul_probl_k}, \eqref{prim+probl}, \eqref{junc_probl_n}, \eqref{mu_k}  for coefficients of  the series (\ref{asymp_expansion}) and \eqref{exp-EVl}, we find with the help of \eqref{calcul1} and  \eqref{calcul2}  that for any $\psi \in \mathcal{H}_\varepsilon$
\begin{equation}\label{just1}
  \int_{\Omega_\varepsilon}\left( \nabla U^{(M)}_{n} \cdot \nabla \psi  -   \rho_\varepsilon(x)\, \mathcal{L}^{(M)}_{n}(\varepsilon)\,
  U^{(M)}_{n}\, \psi \right) dx = \mathfrak{F}_\varepsilon (\psi),
\end{equation}
where
 $\mathfrak{F}_\varepsilon$ is a sum of integrals of residuals produced by $U^{(M)}_{n}$ and $\mathcal{L}^{(M)}_{n}(\varepsilon)$ and $\mathfrak{F}_\varepsilon \in \mathcal{H}_\varepsilon^*.$
 Using \eqref{est3},  \eqref{est4},  \eqref{ineq1}, \eqref{inner_asympt}   and Remarks~\ref{asymp-Pi} and \ref{rem_exp-decrease},
we estimate those residuals  (for more detail see Subsection \ref{subsec_just+alpfa}) and obtain that
\begin{equation}\label{just2}
|\mathfrak{F}_\varepsilon (\psi)| \le C_M \, \left( \varepsilon^{M+1} + \varepsilon^{\gamma M +1} \right) \,  \|\psi \|_\varepsilon.
\end{equation}
Remembering  the definition of the operator $A_\varepsilon$ (see \eqref{operator1}) and the Riesz representation theorem,  we
deduce   from \eqref{just1} and \eqref{just2} the inequality
\begin{equation}\label{just3}
\left\| U^{(M)}_{n} - \mathcal{L}^{(M)}_{n}(\varepsilon)\, A_\varepsilon\Big( U^{(M)}_{n}\Big) \right\|_\varepsilon \le  C_M \,
\varepsilon^{\gamma M +1}  .
\end{equation}

Further we will repeatedly use Lemma 12 \cite{Vishik}, which has wide applications for the approximation
of eigenvalues and eigenfunctions of self-adjoint compact operators. Therefore, we recall it here.
\begin{lemma}[\cite{Vishik}]\label{Vishik}
Let $A:H\rightarrow H$  be a linear  self-adjoint positive compact  operator in a Hilbert space $H.$
Suppose that there exist a positive number $\mu\in\Bbb{R}$ and a vector  $u\in H$ such that $\left\|u\right\|_H=1$ and
$$
\left\|Au-\mu u\right\|_H\leq\beta.
$$

Then there exists an eigenvalue $\lambda$ of the operator $A$ such that  $\left|\mu-\lambda\right|\leq\beta.$
Moreover, for any $d_0>\beta$  there exists a vector  $\widetilde{u}\in H,\ \left\|\widetilde{u}\right\|_H=1,$ such that
	$$
		\left\|u-\widetilde{u}\right\|_H\leq 2 d_0^{-1} \beta,
	$$
where $\widetilde{u}$ is a linear combination of eigenvectors corresponding to all eigenvalues of  $A$ from
the segment  $[\mu - d_0, \mu + d_0]$.
\end{lemma}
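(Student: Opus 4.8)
The plan is to reduce everything to the spectral decomposition of $A$. Since $A$ is self-adjoint, positive and compact, there is an orthonormal basis $\{\varphi_k\}_{k\in\Bbb N}$ of $H$ consisting of eigenvectors, $A\varphi_k=\lambda_k\varphi_k$, with $\lambda_k>0$ and $\lambda_k\to0$. Writing $u=\sum_k c_k\varphi_k$ with $c_k=(u,\varphi_k)_H$ and $\sum_k|c_k|^2=\|u\|_H^2=1$, one has $Au-\mu u=\sum_k(\lambda_k-\mu)c_k\varphi_k$, so that the hypothesis becomes the weighted identity $\sum_k(\lambda_k-\mu)^2|c_k|^2=\|Au-\mu u\|_H^2\le\beta^2$. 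This single relation carries both assertions.

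For the first assertion I would argue by contradiction: if $|\lambda_k-\mu|>\beta$ for every $k$, then $(\lambda_k-\mu)^2>\beta^2$ termwise and hence $\beta^2\ge\sum_k(\lambda_k-\mu)^2|c_k|^2>\beta^2\sum_k|c_k|^2=\beta^2$, which is impossible. The strictness is genuine because $\mu>0$ and $\lambda_k\to0$ force $\mathrm{dist}(\mu,\{\lambda_k\})$ to be attained, the only accumulation point of the $\lambda_k$ being $0$. Therefore some eigenvalue $\lambda=\lambda_k$ satisfies $|\mu-\lambda|\le\beta$. Equivalently, one may invoke the identity $\|(A-\mu I)^{-1}\|=\mathrm{dist}(\mu,\sigma(A))^{-1}$ for self-adjoint $A$ and read off $\mathrm{dist}(\mu,\sigma(A))\le\beta$ directly from $1=\|u\|_H\le\|(A-\mu I)^{-1}\|\,\|(A-\mu I)u\|_H$.

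For the second assertion, fix $d_0>\beta$, put $J:=\{k:|\lambda_k-\mu|\le d_0\}$ and let $P$ be the orthogonal projection onto $\mathrm{span}\{\varphi_k:k\in J\}$, so $Pu=\sum_{k\in J}c_k\varphi_k$ is a linear combination of eigenvectors whose eigenvalues lie in $[\mu-d_0,\mu+d_0]$. Since $(\lambda_k-\mu)^2>d_0^2$ for $k\notin J$, the weighted identity gives $d_0^2\|u-Pu\|_H^2\le\sum_{k\notin J}(\lambda_k-\mu)^2|c_k|^2\le\beta^2$, that is $\|u-Pu\|_H\le\beta/d_0<1$; in particular $\|Pu\|_H^2=1-\|u-Pu\|_H^2>0$, so $\widetilde u:=Pu/\|Pu\|_H$ is a well-defined unit vector of the required form. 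The triangle inequality then yields $\|u-\widetilde u\|_H\le\|u-Pu\|_H+(1-\|Pu\|_H)$, and since $1-\|Pu\|_H\le1-\|Pu\|_H^2=\|u-Pu\|_H^2\le(\beta/d_0)^2\le\beta/d_0$, I obtain $\|u-\widetilde u\|_H\le 2d_0^{-1}\beta$.

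The routine ingredients are the spectral expansion and the two summation estimates. The only point requiring care is the first assertion, where one must ensure that the distance from $\mu$ to the set of eigenvalues is actually attained, so that the strict inequality in the contradiction is legitimate; this is guaranteed by $\mu>0$ together with $\lambda_k\to0$. A mild degenerate subcase, $\beta\ge\mu$, would have to be noted separately, since then the nearest spectral point could a priori be the accumulation value $0$ rather than a genuine eigenvalue, but in the regime $\beta<\mu$ relevant to the applications of \eqref{just3} this complication does not arise.
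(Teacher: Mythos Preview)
The paper does not prove this lemma: it is quoted verbatim from the classical reference of Vishik and Lyusternik and used as a black box in the justification arguments. So there is no in-paper proof to compare against.

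Your argument is the standard one and is correct. The spectral expansion of $u$ reduces both assertions to elementary weighted $\ell^2$ estimates, and your treatment of the second assertion via the orthogonal projection $P$ onto the eigenspace for $[\mu-d_0,\mu+d_0]$, followed by normalisation and the chain $1-\|Pu\|_H\le \|u-Pu\|_H^2\le(\beta/d_0)^2\le\beta/d_0$, is clean and yields exactly the bound $2d_0^{-1}\beta$. Your caveat about the edge case $\beta\ge\mu$ is well observed but, as you note, irrelevant in the paper's applications (where $\mu=\big(\mathcal L_n^{(M)}(\varepsilon)\big)^{-1}$ is bounded away from zero and $\beta$ is a high power of $\varepsilon$), and in any event the conclusion still holds there because the eigenvalues accumulate at $0$.
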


Since for sufficiently small $\varepsilon$
\begin{gather}\label{additional_est1}
 0 < c_1 \le  \mathcal{L}^{(M)}_{n}(\varepsilon) \le c_2
\\
   0<  c_3 \varepsilon \le \| U^{(M)}_{n} \|_\varepsilon \le c_4 \varepsilon \label{additional_est2}
\end{gather}
(the last estimate thanks to  \eqref{EF_lim_prob} ), it follows from \eqref{just3} that
\begin{equation}\label{just4}
\left\| A_\varepsilon\left( \frac{U^{(M)}_{n}}{\|U^{(M)}_{n}\|_\varepsilon}\right)  - \left(\mathcal{L}^{(M)}_{n}(\varepsilon)\right)^{-1} \, \frac{U^{(M)}_{n}}{\|U^{(M)}_{n}\|_\varepsilon}  \right\|_\varepsilon
 \le \tilde{C}_M  \, \varepsilon^{\gamma M } .
\end{equation}

Now let us take
$$
d_0 := \frac{1}{2} \min \left\{ \frac{1}{\Lambda_{n-1}} - \frac{1}{\Lambda_{n}} \, , \,  \frac{1}{\Lambda_{n}} - \frac{1}{\Lambda_{n+1}} \right\},
$$
where $\{\Lambda_n\}_{n\in \Bbb N}$ are simple eigenvalues of the limit spectral problem \eqref{main}.
Since for each $n \in \Bbb N$ the eigenvalue $\lambda_n(\varepsilon)$ goes to $\Lambda_n$ as $\varepsilon \to 0$ (see Lemma~\ref{Lemma_main_Convergence}) and the corresponding eigenfunction satisfies \eqref{normalized} and \eqref{normalized+},
it follows from Lemma~\ref{Vishik} and \eqref{just4} that
\begin{equation}\label{just5}
  \left| \frac{1}{\lambda_n(\varepsilon)} - \frac{1}{\mathcal{L}^{(M)}_{n}(\varepsilon)} \right| \le \tilde{C}_M  \,
   \varepsilon^{\gamma M}  ,
\end{equation}
\begin{equation}\label{just6}
  \left\| \frac{u_n^\varepsilon}{\varepsilon} - \frac{U^{(M)}_{n}}{\|U^{(M)}_{n}\|_\varepsilon} \right\|_\varepsilon \le 2 d_0^{-1} \tilde{C}_M  \, \varepsilon^{\gamma M} .
\end{equation}

Taking  \eqref{t0.1},  \eqref{additional_est1} and the fact that $\gamma \in (\frac{2}{3}, 1)$   into account, we deduce from \eqref{just5} that
\begin{equation}\label{just7-}
  \left| \lambda_n(\varepsilon)  - \mathcal{L}^{(\lfloor \frac{2}{3} M\rfloor -1)}_{n}(\varepsilon) \right| \le \tilde{\tilde{C}}_M  \,  \varepsilon^{\lfloor \frac{2}{3} M\rfloor},
\end{equation}
where $\lfloor t \rfloor$ is the integer part of a real number $t.$
Since $M$ is arbitrary positive integer,  and  it follows from \eqref{just7-} that
for any $p\in \Bbb N$
\begin{equation}\label{just7}
  \left| \lambda_n(\varepsilon)  - \mathcal{L}^{(p -1)}_{n}(\varepsilon) \right| \le {C}_p  \,
\varepsilon^{p}.
\end{equation}

With regard to \eqref{normalized} and \eqref{additional_est2},  we get  from  \eqref{just6}  that
\begin{equation}\label{just9}
  \left\| u_n^\varepsilon - \tau_M(\varepsilon) \,  U^{(M)}_{n} \right\|_\varepsilon \le
{C  }_M  \, \varepsilon^{\gamma M +1} ,
\end{equation}
where $\tau_M(\varepsilon)= \varepsilon  /  \|U^{(M)}_{n}\|_\varepsilon,$ \  $ 0 < \frac{1}{c_4} \le \tau_M(\varepsilon) \le \frac{1}{c_3},$ and due to
\eqref{EF_lim_prob}
\begin{equation}\label{ta-m}
\tau_M(\varepsilon)= \frac{1}{\sqrt{\pi}} + \mathcal{O}(\varepsilon^{\frac{\gamma}{2}}) \quad \text{as} \quad \varepsilon \to 0.
\end{equation}

Thus, the following theorem is proved.
\begin{theorem}[$\alpha =0$]
  For each $n\in \Bbb N$ the $n$-th eigenvalue $\lambda_n(\varepsilon)$ of the problem \eqref{1.1}  is  decomposed into the asymptotic series
$$
\lambda_n(\varepsilon) \approx \Lambda_n + \sum_{k=1}^{+\infty} \varepsilon^k \, \mu_{k,n} \quad \text{as} \quad \varepsilon \to 0,
$$
where $\Lambda_n$ is the $n$-th eigenvalue of the limit spectral problem \eqref{main}  and the coefficients $\{\mu_{k,n}\}$ are defined by formulas
\eqref{mu_1}  and \eqref{mu_k}, and  the asymptotic estimate \eqref{just7} holds; in particular
\begin{equation}\label{t5}
\lambda_n(\varepsilon) = \Lambda_n + \varepsilon \mu_{1, n} + \mathcal{O}(\varepsilon^2)
\quad \text{as} \quad \varepsilon \to 0 .
\end{equation}

For the corresponding eigenfunction $u_n^\varepsilon$ normalized by \eqref{normalized} and \eqref{normalized+} one can construct  the approximation function $U_n^{(M)} \in \mathcal{H}_\varepsilon$ defined with \eqref{aaN} such that the asymptotic estimate \eqref{just9} holds  for any $M\in \Bbb N, \ M \ge 3,$  where $\gamma$ is a fixed number from the interval $(\frac23, 1).$
\end{theorem}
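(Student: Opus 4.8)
The plan is to justify the formally constructed series of Subsections~\ref{regul_asymp}--\ref{justification} by substituting their partial sums into the weak formulation and invoking the abstract approximation Lemma~\ref{Vishik}. Having fixed $\mu_0 = \Lambda_n$, $\widetilde{w}_0 = \widetilde{W}_n$ and determined all subsequent coefficients $\mu_{k,n}$, $w_{k,n}^{(i)}$, $u_{k,n}^{(i)}$, $\Pi_{k,n}^{(i)}$, $N_{k,n}$ through the recurrent scheme, I form the partial sum $U_n^{(M)} \in \mathcal{H}_\varepsilon$ of \eqref{aaN} and the truncated eigenvalue $\mathcal{L}_n^{(M)}(\varepsilon)$ of \eqref{part_EV}. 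The goal is to show that the normalized vector $U_n^{(M)}/\|U_n^{(M)}\|_\varepsilon$ is an approximate eigenvector of $A_\varepsilon$ with approximate eigenvalue $(\mathcal{L}_n^{(M)}(\varepsilon))^{-1}$, the defect decaying like a power of $\varepsilon$.

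First I would substitute $U_n^{(M)}$ and $\mathcal{L}_n^{(M)}(\varepsilon)$ into \eqref{identity} in place of $u^\varepsilon$ and $\lambda(\varepsilon)$. Because every coefficient was built to satisfy its defining problem---the limit problem \eqref{main}, the recurrent graph problems \eqref{omega_probl*}, the cell problems \eqref{regul_probl_2}--\eqref{regul_probl_k}, the boundary-layer problems \eqref{prim+probl}, the inner problems \eqref{junc_probl_n}, and the solvability relation \eqref{mu_k} fixing each $\mu_{k,n}$---the bulk of the terms cancel and only a residual functional $\mathfrak{F}_\varepsilon$ remains, as in \eqref{just1}. This residual collects the series tail beyond order $M$ together with the artifacts generated by the cut-off functions $\chi_{\ell_0}^{(i)}$ and $\chi_\delta^{(i)}$ when the Laplacian is distributed across the matching and boundary zones, computed explicitly in \eqref{calcul1}--\eqref{calcul2}. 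Using the Poincar\'e-type inequalities \eqref{est3}, \eqref{est4}, the trace inequality \eqref{ineq1}, the exponential decay of the boundary-layer terms (Remark~\ref{asymp-Pi}) and of the stabilized inner terms \eqref{inner_asympt} (Remark~\ref{rem_exp-decrease}), I would bound each residual in the dual norm and arrive at \eqref{just2}.

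With \eqref{just2} secured, the definition \eqref{operator1} of $A_\varepsilon$ and the Riesz representation theorem convert the residual identity into the operator estimate \eqref{just3}, which after normalization by $\|U_n^{(M)}\|_\varepsilon$ and the two-sided bounds \eqref{additional_est1}, \eqref{additional_est2} becomes \eqref{just4}. I then apply Lemma~\ref{Vishik} with $d_0$ equal to half the minimal gap among the reciprocals $1/\Lambda_{n-1}, 1/\Lambda_n, 1/\Lambda_{n+1}$ of the simple limit eigenvalues; combined with the preliminary convergence $\lambda_n(\varepsilon) \to \Lambda_n$ (Lemma~\ref{Lemma_main_Convergence}), this guarantees that the segment $[\mu - d_0, \mu + d_0]$ isolates precisely the $n$-th eigenvalue, yielding \eqref{just5} and \eqref{just6}. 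Invoking the a~priori bound \eqref{t0.1}, the normalization \eqref{normalized}, and $\gamma \in (\tfrac23, 1)$, I pass from the reciprocal estimates to \eqref{just7} for the eigenvalue and \eqref{just9} for the eigenfunction. Since $M$ may be taken arbitrarily large, \eqref{just7} furnishes the full asymptotic series, and the case $M = 3$ gives the two-term expansion \eqref{t5}.

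The principal obstacle is the residual estimate \eqref{just2}, and within it the control of the cut-off artifacts in \eqref{calcul1}: differentiating $\chi_{\ell_0}^{(i)}(x_i/\varepsilon^\gamma)$ produces dangerous factors $\varepsilon^{-\gamma}$. These are tamed precisely by the matching design, for in the overlap zone $x_i \asymp \varepsilon^\gamma$ one has $\xi_i = x_i/\varepsilon \asymp \varepsilon^{\gamma-1} \to +\infty$, so the difference $w_{k,n}^{(i)}(x_i) - N_{k,n}(x/\varepsilon)$ between the regular and inner descriptions is exponentially small by \eqref{inner_asympt}; weighing this against the $\varepsilon^{-\gamma}$ loss and the volume of the overlap leaves the balanced rate $\varepsilon^{\gamma M + 1}$. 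The admissible window for $\gamma$ then emerges from this balance: $\gamma < 1$ keeps the overlap inside the cylinders and away from the node, while $\gamma > \tfrac23$ ensures that already for the minimal admissible $M = 3$ the accuracy $\varepsilon^{\gamma M} = \varepsilon^{3\gamma}$ beats $\varepsilon^2$, so that after the reciprocal conversion the two-term expansion \eqref{t5} carries a genuine $\mathcal{O}(\varepsilon^2)$ remainder. Verifying this cancellation term by term, uniformly in $\varepsilon$, is the delicate part of the argument.
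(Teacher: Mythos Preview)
Your proposal is correct and follows essentially the same approach as the paper: substitute the partial sums $U_n^{(M)}$ and $\mathcal{L}_n^{(M)}(\varepsilon)$ into the weak formulation, exploit the recurrent problems to cancel all but a residual $\mathfrak{F}_\varepsilon$, bound it via \eqref{est3}--\eqref{ineq1} and the exponential decay from Remarks~\ref{asymp-Pi} and~\ref{rem_exp-decrease} to obtain \eqref{just2}--\eqref{just4}, and then apply Lemma~\ref{Vishik} together with the convergence Lemma~\ref{Lemma_main_Convergence} and the simplicity of the limit spectrum to identify the approximated eigenvalue as $\lambda_n(\varepsilon)$. Your explanation of how the matching asymptotics \eqref{inner_asympt} neutralize the $\varepsilon^{-\gamma}$ factors from the cut-off derivatives, and of why $\gamma>\tfrac23$ is needed to recover \eqref{t5} from the minimal $M=3$, matches the paper's reasoning (the detailed residual computation is in fact deferred there to Subsection~\ref{subsec_just+alpfa}).
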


From  \eqref{just9} with $M= 3,$ similarly as in \cite[Corollary 5.1]{Mel_Klev_M2AS-2018},  it follows the corollary.
\begin{corollary}\label{corollary1}
For the difference between the eigenfunction  $u_n^\varepsilon$ of the problem \eqref{1.1} and the partial sum
$$
U_n^{(1)} = \sum\limits_{i=1}^3 \chi_{\ell_0}^{(i)} \left(\frac{x_i}{\varepsilon^\gamma}\right)
    \left( W_{n}^{(i)}(x_i) + \varepsilon\, w_{1,n}^{(i)} (x_i) \right)
    +
    \left(1 - \sum\limits_{i=1}^3 \chi_{\ell_0}^{(i)} \left(\frac{x_i}{\varepsilon^\gamma}\right) \right)
  \left( W_n^{(1)}(0) + \varepsilon   N_{1,n} \left( \frac{x}{\varepsilon} \right) \right), \quad x\in\Omega_\varepsilon,
$$
of \eqref{asymp_expansion}, where $\widetilde{W}_n=\{W_{n}^{(i)}\}_{i=1}^3$ is the eigenfunction of the limit  problem~$(\ref{main}),$
 the  following asymptotic estimates  hold:
\begin{equation}\label{t6}
 \left\| \, u_n^\varepsilon - \tau_3(\varepsilon) \, U_n^{(1)} \right\|_{H^1(\Omega_\varepsilon)} \leq C_1 \,
 \varepsilon^{2} ;
\end{equation}
\begin{equation}\label{t7}
 \left\| \, u_n^\varepsilon  - \tau_3(\varepsilon) \, \left( W_{n}^{(i)} + \varepsilon\, w_{1,n}^{(i)} \right) \right\|_{H^1(\Omega_{\varepsilon,\gamma}^{(i)})} \leq C_2 \,  \varepsilon^{2}
\end{equation}
in the thin cylinders $\Omega_{\varepsilon,\gamma}^{(i)} :=
 \Omega_\varepsilon^{(i)} \cap \big\{ x\in \Bbb{R}^3 : \
 x_i\in I_{\varepsilon, \gamma}^{(i)}:= (3\ell_0\varepsilon^\gamma, \ell_i) \big\}, \ i\in \{1, 2, 3\};$
and
 \begin{equation}\label{t-joint0}
     \left\| \, \nabla_{x}u_n^\varepsilon   - \tau_3(\varepsilon) \nabla_{\xi} \, N_{1,n} \right\|_{L^2(\Omega^{(0)}_{\varepsilon, \ell_0})}
 \le  C_3 \, \varepsilon^{\frac52}
\end{equation}
in  the  neighbourhood
$\Omega^{(0)}_{\varepsilon, \ell_0} :=
 \Omega_\varepsilon\cap \big\{ x : \ \ x_i<2\ell_0\varepsilon, \ i=1,2,3 \big\}$
of the node  $\Omega^{(0)}_{\varepsilon}.$ Here $\tau_3(\varepsilon)$ has the asymptotics \eqref{ta-m}.
 \end{corollary}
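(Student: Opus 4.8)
The plan is to deduce all three estimates from the global bound \eqref{just9} with $M=3$, namely
\[
\big\| u_n^\varepsilon - \tau_3(\varepsilon)\, U_n^{(3)} \big\|_\varepsilon \le C_3\, \varepsilon^{3\gamma+1},
\]
by replacing the full third-order approximation $U_n^{(3)}$ with the cruder approximations appearing in \eqref{t6}--\eqref{t-joint0} and then estimating the discarded higher-order terms. Since $\gamma\in(\tfrac23,1)$ we have $3\gamma+1>3$, so the residual on the right-hand side is $o(\varepsilon^2)$ and even $o(\varepsilon^{5/2})$; hence it never dominates, and by the uniform equivalence of $\|\cdot\|_\varepsilon$ and $\|\cdot\|_{H^1(\Omega_\varepsilon)}$ (see \eqref{est4}) the corresponding bounds transfer to the $H^1$-norm. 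Throughout I would use $\tau_3(\varepsilon)=\pi^{-1/2}+\mathcal{O}(\varepsilon^{\gamma/2})$ from \eqref{ta-m}, which is bounded above and below.

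By the definition \eqref{aaN},
\[
U_n^{(3)} - U_n^{(1)} = \varepsilon^2\big(\overline{u}_{2,n} + \overline{\Pi}_{2,n} + \overline{N}_{2,n}\big) + \varepsilon^3\big(\overline{u}_{3,n} + \overline{\Pi}_{3,n} + \overline{N}_{3,n}\big).
\]
To obtain \eqref{t6} I would estimate the $\|\cdot\|_\varepsilon$-norm of each summand, exploiting the anisotropic scaling: differentiation in the fast transverse variable $\overline{x}_i/\varepsilon$ (or in $x/\varepsilon$ near the node) produces a factor $\varepsilon^{-1}$, while the $L^2(\Omega_\varepsilon)$-measure of a bounded function is $\mathcal{O}(\varepsilon)$ in the cylinders and $\mathcal{O}(\varepsilon^{3/2})$ in the node. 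The dominant contribution is $\varepsilon^2\overline{u}_{2,n}$, whose Dirichlet norm is $\mathcal{O}(\varepsilon^2\cdot\varepsilon^{-1}\cdot\varepsilon)=\mathcal{O}(\varepsilon^2)$; the boundary-layer terms $\overline{\Pi}_{k,n}$ are concentrated in an $\varepsilon$-layer near $\Upsilon_\varepsilon^{(i)}(\ell_i)$ and contribute $\mathcal{O}(\varepsilon^{5/2})$, while the inner terms $\overline{N}_{k,n}$ contribute $\mathcal{O}(\varepsilon^{3\gamma/2+1})=o(\varepsilon^2)$. Summing gives $\|U_n^{(3)}-U_n^{(1)}\|_\varepsilon\le C\varepsilon^2$, and the triangle inequality together with \eqref{est4} yields \eqref{t6}.

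Estimate \eqref{t7} then follows almost immediately: on $\Omega_{\varepsilon,\gamma}^{(i)}$ every cut-off $\chi_{\ell_0}^{(i)}(x_i/\varepsilon^\gamma)\equiv1$ and the inner part is absent, so $U_n^{(1)}$ coincides there with $W_{n}^{(i)}+\varepsilon w_{1,n}^{(i)}$; restricting the $H^1(\Omega_\varepsilon)$-norm in \eqref{t6} to the subdomain $\Omega_{\varepsilon,\gamma}^{(i)}\subset\Omega_\varepsilon$ produces \eqref{t7}. For \eqref{t-joint0} I would instead work in $\Omega^{(0)}_{\varepsilon,\ell_0}$, where conversely every cut-off vanishes (since $x_i/\varepsilon^\gamma\le2\ell_0\varepsilon^{1-\gamma}\to0$) and both the regular and boundary-layer parts disappear; hence on that set $\nabla_x U_n^{(3)}=\nabla_\xi N_{1,n}+\varepsilon\nabla_\xi N_{2,n}+\varepsilon^2\nabla_\xi N_{3,n}$. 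Because $\xi=x/\varepsilon$ stays in a bounded subset of $\Xi$ there, the discarded gradient $\varepsilon\nabla_\xi N_{2,n}$ integrated over the node volume $\mathcal{O}(\varepsilon^3)$ has $L^2$-norm $\mathcal{O}(\varepsilon\cdot\varepsilon^{3/2})=\mathcal{O}(\varepsilon^{5/2})$, and combining this with the restriction of \eqref{just9} (which contributes only $\mathcal{O}(\varepsilon^{3\gamma+1})=o(\varepsilon^{5/2})$) yields \eqref{t-joint0}.

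The main obstacle will be the bookkeeping for the inner terms $\overline{N}_{k,n}$ in the overlap zone $2\ell_0\varepsilon^\gamma\le x_i\le3\ell_0\varepsilon^\gamma$, where the cut-off $\chi_{\ell_0}^{(i)}(x_i/\varepsilon^\gamma)$ switches: there $N_{k,n}$ is evaluated at $\xi_i\sim\varepsilon^{\gamma-1}$ and grows polynomially like $\Psi_k^{(i)}(\xi_i)\sim\xi_i^{k}$ (see \eqref{Psi_k}), while the cut-off derivative supplies an extra factor $\varepsilon^{-\gamma}$. One must verify that these competing factors, together with the volume element, still leave every such contribution of order at least $\varepsilon^{3\gamma/2+1}$, which is $o(\varepsilon^2)$ precisely because $\gamma>\tfrac23$; this is exactly where the admissible range of $\gamma$ enters, and the computation parallels the one in \cite[Corollary 5.1]{Mel_Klev_M2AS-2018}.
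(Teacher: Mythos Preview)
Your proposal is correct and follows essentially the same approach that the paper itself indicates: the paper's entire proof is the one-line remark that the corollary follows from \eqref{just9} with $M=3$ ``similarly as in \cite[Corollary~5.1]{Mel_Klev_M2AS-2018}'', and your argument unpacks precisely that deduction --- start from the global estimate for $U_n^{(3)}$, discard the $k=2,3$ layers via direct scaling bounds (using the cylinder measure $\varepsilon^2$, the node measure $\varepsilon^3$, and the $\varepsilon^\gamma$-width of the transition zone), then localize to the subdomains where the cut-offs are identically $1$ or $0$. Your identification of the overlap zone as the place where the restriction $\gamma>\tfrac23$ is actually needed is exactly right.
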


Using the Cauchy-Buniakovskii-Schwarz inequality and  the continuously embedding of
the space $H^1(I_{\varepsilon, \gamma}^{(i)})$  in $C\Big(\overline{I_{\varepsilon, \gamma}^{(i)}}\Big),$
it follows from (\ref{t7}) the following corollary.

\begin{corollary}\label{corollary2}
If $h_i(x_i) \equiv h_i \equiv const, \, (i=1,2,3),$ then
\begin{equation}\label{t9}
 \left\| \, E^{(i)}_\varepsilon(u_n^\varepsilon) - \tau_3(\varepsilon) \,W_{n}^{(i)} \right\|_{H^1(I_{\varepsilon, \gamma}^{(i)})} \leq \widetilde{C}_2 \,
 \varepsilon,
\end{equation}
\begin{equation}\label{t10}
\max\nolimits_{x_i\in \overline{I_{\varepsilon, \gamma}^{(i)}}} \left| \, E^{(i)}_\varepsilon(u_n^\varepsilon)(x_i) - \tau_3(\varepsilon) \,W_{n}^{(i)}(x_i)
\right|  \leq \widetilde{C}_3 \,  \varepsilon,
\end{equation}
where
$$
\big(E^{(i)}_\varepsilon u_n^\varepsilon\big)(x_i)
 = \frac{1}{\pi \varepsilon^2\, h_i^2}
   \int_{\Upsilon^{(i)}_\varepsilon(0)}
  u_n^\varepsilon(x)\, d\overline{x}_i,
\quad i=1,2,3.
$$
\end{corollary}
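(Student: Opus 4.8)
The plan is to apply the cross-sectional averaging operator $E^{(i)}_\varepsilon$ to the already established estimate \eqref{t7} and to exploit two features of this operator: it reproduces functions that depend only on the longitudinal variable $x_i$, and (precisely because $h_i\equiv\mathrm{const}$) it commutes with $\tfrac{d}{dx_i}$. First I would observe that since $W^{(i)}_n$ and $w^{(i)}_{1,n}$ are independent of the transverse variables, and $\tfrac{1}{\pi\varepsilon^2 h_i^2}\int_{\Upsilon^{(i)}_\varepsilon(0)} 1\, d\overline{x}_i = 1$, one has the exact identity $E^{(i)}_\varepsilon\big(W^{(i)}_n + \varepsilon w^{(i)}_{1,n}\big) = W^{(i)}_n + \varepsilon w^{(i)}_{1,n}$ on $I^{(i)}_{\varepsilon,\gamma}$. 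Consequently, the object whose norm must be controlled is exactly $E^{(i)}_\varepsilon$ applied to the difference occurring in \eqref{t7}.

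The key step is a quantitative bound for $E^{(i)}_\varepsilon$ viewed as a map from $H^1(\Omega^{(i)}_{\varepsilon,\gamma})$ into $H^1(I^{(i)}_{\varepsilon,\gamma})$. Applying the Cauchy--Buniakovskii--Schwarz inequality over the disk $\Upsilon^{(i)}_\varepsilon(0)$, whose area equals $\pi\varepsilon^2 h_i^2$, gives $\int_{I^{(i)}_{\varepsilon,\gamma}}\big|E^{(i)}_\varepsilon v\big|^2\,dx_i \le \tfrac{1}{\pi\varepsilon^2 h_i^2}\int_{\Omega^{(i)}_{\varepsilon,\gamma}} v^2\,dx$. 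Because $h_i$ is constant the cross-section does not depend on $x_i$, so differentiation under the integral sign yields $\tfrac{d}{dx_i}\big(E^{(i)}_\varepsilon v\big) = E^{(i)}_\varepsilon(\partial_{x_i}v)$, and the same Cauchy--Schwarz argument bounds its $L^2(I^{(i)}_{\varepsilon,\gamma})$-norm by $\tfrac{1}{\sqrt{\pi}\,\varepsilon h_i}\,\|\partial_{x_i}v\|_{L^2(\Omega^{(i)}_{\varepsilon,\gamma})}$. Together these produce
\[
\big\|E^{(i)}_\varepsilon v\big\|_{H^1(I^{(i)}_{\varepsilon,\gamma})} \le \frac{C}{\varepsilon}\,\|v\|_{H^1(\Omega^{(i)}_{\varepsilon,\gamma})}.
\]

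Taking $v = u^\varepsilon_n - \tau_3(\varepsilon)\big(W^{(i)}_n + \varepsilon w^{(i)}_{1,n}\big)$ and inserting the $\mathcal{O}(\varepsilon^2)$ bound of \eqref{t7} then yields $\big\|E^{(i)}_\varepsilon(u^\varepsilon_n) - \tau_3(\varepsilon)\big(W^{(i)}_n + \varepsilon w^{(i)}_{1,n}\big)\big\|_{H^1(I^{(i)}_{\varepsilon,\gamma})} \le C\varepsilon$. Since $\|w^{(i)}_{1,n}\|_{H^1}$ is bounded uniformly in $\varepsilon$ and $\tau_3(\varepsilon)$ stays bounded by \eqref{ta-m}, the triangle inequality absorbs the term $\tau_3(\varepsilon)\varepsilon w^{(i)}_{1,n}$ and gives \eqref{t9}. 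For \eqref{t10} I would invoke the embedding $H^1(I^{(i)}_{\varepsilon,\gamma})\hookrightarrow C\big(\overline{I^{(i)}_{\varepsilon,\gamma}}\big)$; because the intervals $I^{(i)}_{\varepsilon,\gamma}=(3\ell_0\varepsilon^\gamma,\ell_i)$ increase to the fixed interval $(0,\ell_i)$ as $\varepsilon\to0$, the embedding constant is uniformly bounded, whence the sup-norm estimate follows from \eqref{t9}. The main obstacle is exactly the commutation of averaging with differentiation, which fails for variable $h_i$ and is the single place where the hypothesis $h_i\equiv\mathrm{const}$ is used; the uniform-in-$\varepsilon$ control of the one-dimensional embedding constant on the shrinking interval is the secondary point to verify.
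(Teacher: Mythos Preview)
Your proposal is correct and follows essentially the same approach as the paper, which simply states that the corollary follows from \eqref{t7} by the Cauchy--Buniakovskii--Schwarz inequality together with the continuous embedding $H^1(I_{\varepsilon,\gamma}^{(i)})\hookrightarrow C\big(\overline{I_{\varepsilon,\gamma}^{(i)}}\big)$. You have supplied exactly these ingredients with the appropriate detail, including the point that the hypothesis $h_i\equiv\mathrm{const}$ is what makes the averaging operator commute with $\partial_{x_i}$ and hence bounded from $H^1(\Omega_{\varepsilon,\gamma}^{(i)})$ to $H^1(I_{\varepsilon,\gamma}^{(i)})$ with norm $\mathcal{O}(\varepsilon^{-1})$.
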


\begin{remark}\label{remark-answers}
We see that the local geometric irregularity of the node  does not affect the view of  the limit spectral problem \eqref{main}. However,
the second terms $\{w_{1,n}^{(i)}\}_{i=1}^3$ of the regular asymptotics \eqref{regul}
and the second term $\mu_{1,n}$ in the asymptotic expansion \eqref{exp-EVl}  for the eigenvalue $\lambda_n(\varepsilon)$
sensate the node impact through the values
 $\boldsymbol{\delta_{1,n}^{(2)}},$  $\boldsymbol{\delta_{1,n}^{(3)}}$ and $\boldsymbol{\delta_{1,n}^{(i)}}$ (see  \eqref{delta_1},  \eqref{d_1^*}
and \eqref{mu_1}, respectively).

Therefore, the problem \eqref{omega_probl_1}  for the second term of the regular asymptotics  should be considered in parallel with the limit spectral problem \eqref{main} to observe the node influence  on the asymptotic behavior of the spectrum through the asymptotic estimate \eqref{t5}. This conclusion coincides with the main inference of  \cite{Mel_Klev_AA-2019}.

In addition,  one-dimensional problems for two leading regular terms of the asymptotics on the corresponding  graph yield more correct information about  the original physical system in a thin graph-like structure (see \eqref{t6} and \eqref{t7}).

To obtain more information about the eigenfunction $u_n^\varepsilon$  in a neighborhood of the node,
 the problem \eqref{new_junc_probl_1} should be considered  in parallel with the limit spectral problem \eqref{main}
 in virtue of the estimate \eqref{t-joint0}.
\end{remark}

\section{Asymptotic approximations in the case $\alpha \in (0, 1)$}\label{Sec_4}

In this and subsequent sections, more attention will be paid  on the effect of the concentrated mass. Therefore, we assume that the thin cylinders $\{\Omega^{(i)}_\varepsilon\}_{i=1}^3$  are rectilinear, i.e., the functions $h_i \equiv h_i (0), \ i=1, 2, 3.$
As a result (see \S~\ref{regul_asymp}),  all coefficients $\{u_k\}_{k=2}^\infty$ in the regular part \eqref{regul} vanish.
This in turn means that  there is no boundary-layer part \eqref{prim+}  in the asymptotics.

 Since the problem now has two parameters, the asymptotic scale must be changed.  If the parameter $\alpha$ is an irrational number from $(0, 1),$ then the following  ansatzes of series for the approximation of  an eigenfunction $u^\varepsilon$  and the corresponding eigenvalue $\lambda(\varepsilon)$ (the index $n$ is omitted)  of  the problem~\eqref{1.1} are proposed:
\begin{enumerate}
  \item
   \begin{equation}\label{regul+alfa}
\mathcal{U}_\infty^{(i)} := \sum\limits_{k=0}^{+\infty} \sum\limits_{p=0}^{+\infty}  \varepsilon^{k - p\alpha } \, w_{k - p\alpha}^{(i)} (x_i)
\end{equation}
for the regular parts of the asymptotics in  each thin cylinder $\Omega^{(i)}_\varepsilon,  \ i=1,2,3,$ respectively;
 \item
  \begin{equation}\label{junc+alfa}
  \mathcal{N}^{(\infty)}  :=
\sum\limits_{k=0}^{+\infty} \sum\limits_{p=0}^{+\infty}  \varepsilon^{k - p\alpha } \, N_{k - p\alpha}\left(\frac{x}{\varepsilon}\right)
\end{equation}
for the inner part of the asymptotics in a neighborhood of the node $\Omega^{(0)}_\varepsilon;$
\item
\begin{equation}\label{exp-EVl+alfa}
 \mathcal{L}^{(\infty)} :=  \sum\limits_{k=0}^{+\infty} \sum\limits_{p=0}^{+\infty}  \varepsilon^{k - p\alpha } \, \mu_{k - p\alpha } .
\end{equation}
for  the  eigenvalue $\lambda(\varepsilon).$
\end{enumerate}

\begin{remark}\label{r4*1}
Hereinafter coefficients with negative indices are considered to be zero in all sums.
\end{remark}

\begin{definition}\label{def_4*1} The series \eqref{exp-EVl+alfa} is called the asymptotic expansion for an eigenvalue $ \lambda(\varepsilon)$
if for any $M \in \Bbb N$
\begin{equation}\label{def_as_exp}
\left| \lambda(\varepsilon) -  \mathcal{L}^{(M)}\right| = \overline{o} (\varepsilon^{M }) \quad \text{as} \ \
\varepsilon \to 0,
\end{equation}
where
$$
\mathcal{L}^{(M)} := \sum\limits_{k=0}^{M} \sum\limits_{\tiny{
  \begin{array}{c}
 p    \\
 0\le k - p\alpha \le M
  \end{array}
}}
  \varepsilon^{k - p\alpha} \, \mu_{k - p\alpha}
$$
is the partial sum of \eqref{exp-EVl+alfa}.
\end{definition}
This is a non-standard definition of an asymptotic expansion, since the better accuracy of the the approximation  we want, the more terms between integer powers of $\varepsilon$ must be determined. Similar definitions are given for such series in Banach spaces.

\begin{remark}\label{rational}
If  $\alpha$ is a rational number $\frac{m_0}{n_0},$ where ${m_0}, {n_0}$ are relatively prime numbers and ${m_0} <{n_0},$ then
the asymptotic scale $\{\varepsilon^{k - p\alpha}\}_{k, p \in \Bbb N_0},$ as is easy to see,
 becomes $\{\varepsilon^{\frac{k}{n_0}}\}_{k\in \Bbb N_0}.$
Thus, in this case we take  the following asymptotic ansatzes:
\begin{enumerate}
  \item
   \begin{equation}\label{regul+ration}
\mathcal{U}_\infty^{(i)}  :=   \sum\limits_{k=0}^{+\infty}  \varepsilon^{\frac{k}{n_0}} \, w_{\frac{k}{n_0}}^{(i)} (x_i)
\end{equation}
for the regular parts of the asymptotics in  each thin cylinder $\Omega^{(i)}_\varepsilon,  \ i=1,2,3,$ respectively;
 \item
  \begin{equation}\label{junc+ration}
  \mathcal{N}^{(\infty)} :=  \sum\limits_{k=0}^{+\infty} \varepsilon^{\frac{k}{n_0}} \, N_{\frac{k}{n_0}}\left(\frac{x}{\varepsilon}\right)
\end{equation}
for the inner part of the asymptotics in a neighborhood of the node $\Omega^{(0)}_\varepsilon;$
\item  and
\begin{equation}\label{exp-EVl+ration}
  \mathcal{L}^{(\infty)} :=   \sum\limits_{k=0}^{+\infty}  \varepsilon^{\frac{k}{n_0}} \, \mu_{\frac{k}{n_0}}
\end{equation}
for  the corresponding eigenvalue.
\end{enumerate}
\end{remark}

Further we will show in detail how to find the coefficients of the series \eqref{regul+alfa}, \eqref{junc+alfa} and \eqref{exp-EVl+alfa} and
prove the corresponding asymptotic estimates. At the end of this section,  we briefly consider the case~$\alpha =\frac{m_0}{n_0}.$

\subsection{The parameter $\alpha$ is an irrational number}\label{irrational}

Substituting \eqref{regul+alfa} and \eqref{exp-EVl+alfa}  into the differential equation and boundary conditions of the problem~\eqref{1.1}, collecting the coefficients at the same power of $\varepsilon$,  we get the following relations for all $\{k, p\} \subset \Bbb N_0$:
\begin{eqnarray}
    -   \dfrac{d^2 w_{k - p\alpha }^{(i)}(x_i)}{d{x_i}^2} &=&  \sum_{j=0}^{k}
\sum\limits_{m=0}^{p}
\mu_{j - m\alpha}  \,  w_{k -j - (p-m)\alpha} ^{(i)} (x_i), \quad x_i\in I_\varepsilon^{(i)}, \label{omega_probl_alfa_k}
\\
w_{k - p\alpha }^{(i)}(\ell_i)&=&0, \quad i=1, 2, 3. \label{BC_w_alfa_k}
\end{eqnarray}

Doing the same with \eqref{junc+alfa} and \eqref{exp-EVl+alfa}  and   matching  the expansions  \eqref{regul+alfa} and  \eqref{junc+alfa},
we get
\begin{equation}\label{N_probl_k+alfa}
\left\{
\begin{array}{rcl}
    -  \Delta_{\xi}N_{k - p\alpha}(\xi)  &=&  \sum\limits_{j=0}^{k-2} \sum\limits_{m=0}^{p}
\mu_{j - m\alpha}  \,  N_{k-2 -j - (p-m)\alpha}(\xi) , \quad \xi\in \Xi^{(i)},
\\\\
-  \Delta_{\xi}N_{k - p\alpha}(\xi)  &=&  \varrho_0(\xi)\sum\limits_{j=0}^{k-2}
\sum\limits_{m=0}^{p-1}
\mu_{j - m\alpha }  \,  N_{k-2 -j - (p-1-m)\alpha} (\xi) , \quad \xi\in \Xi^{(0)},
\\\\
   \partial_{{\boldsymbol \nu}_\xi}{N_{k - p\alpha }}(\xi) & = &
   0,     \quad \xi\in \partial \Xi,
\\[4mm]
 N_{k - p\alpha}(\xi)      & \sim &
   w^{(i)}_{k - p\alpha }(0) + \Psi^{(i)}_{k - p\alpha}(\xi_i),
   \quad \xi_i \to +\infty, \ \ \xi  \in \Xi^{(i)}, \quad i=1,2,3,
\end{array}
\right.
\end{equation}
where
$$
\Psi_{k - p\alpha}^{(i)}(\xi_i) =   \sum\limits_{j=1}^{k} \dfrac{\xi_i^j}{j!}\,
     \dfrac{d^j w_{k -j - p\alpha}^{(i)}}{dx_i^j} (0)
 \quad  i=1,2,3.
$$
The solvability condition for the corresponding problem for $\widetilde{N}_{k - p\alpha}$ (see \S~\ref{inner_asymp})  looks as follows:
\begin{equation}\label{trans*k-p*alpfa}
    \sum\limits_{i=1}^3 h_i^2 (0) \frac{d w_{k -1 - p\alpha}^{(i)}}{dx_i} (0)
 =  \boldsymbol{d_{k -1 - p\alpha}^*},
\end{equation}
where
\begin{multline}\label{const_d*k-p*alpfa}
\boldsymbol{d_{k  - p\alpha}^*}
 =  \sum\limits_{i=1}^3  h_i^2 (0) \, \sum\limits_{j=1}^k \frac{\ell_0^j}{j!} \sum_{t=0}^{k-j } \sum_{m=0}^{p} \mu_{t -m \alpha} \,
    \dfrac{d^{j-1} w_{k-j- t - (p- m)\alpha}^{(i)}}{d x_i^{j-1}} (0)
\\
-  \frac{1}{\pi}  \sum_{j=0}^{k-1} \sum_{m=0}^{p} \mu_{j - m \alpha} \, \sum\limits_{i=1}^3 \int_{\Xi^{(i)}}\Big(
N_{k-1-j -(p -m)\alpha}(\xi) -  G^{(i)}_{k-1-j -(p-m)\alpha}(\xi_i) \Big)  \,  d\xi
\\
     -  \frac{1}{ \pi}\int_{\Xi^{(0)}} \varrho_0(\xi) \sum_{j=0}^{k-1} \sum_{m=0}^{p-1} \mu_{j - m \alpha} \, N_{k-1-j -(p-1 -m)\alpha}(\xi)  \, d\xi,
\end{multline}
where
\begin{equation}\label{formula2-alfa}
G^{(i)}_{k - p\alpha}(\xi_i)  := w_{k - p\alpha}^{(i)}(0) + \Psi_{k - p\alpha}^{(i)}(\xi_i), \quad \xi\in\Xi^{(i)},
\quad i=1,2,3.
\end{equation}

\medskip

{\bf 1.} Fix some $M\in \Bbb N;$ the index $k\in \{0, 1,\ldots,M\}.$  Then
in the corresponding partial sums of \eqref{regul+alfa},
\eqref{junc+alfa} and \eqref{exp-EVl+alfa} there is a finite  number of terms  (see Definition~\ref{def_4*1}).

For the first term  $N_0$ in \eqref{junc+alfa} we obtain the problem \eqref{junc_probl_general-k=0}, and as a result,
the first transmission condition \eqref{trans0} for $\{w_0^{(i)}\}_{i=1}^3$ and  $N_0 \equiv  \widetilde{N}_0 \equiv w_0^{(1)} (0)$
(see for more detail \S~\ref{sub_3*4}).
The same problem \eqref{junc_probl_general-k=0} is  also  for the coefficients $N_{k - p\alpha}$  if $k - p\alpha < 1,$
 but with the following conditions:
$
N_{k - p\alpha}(\xi) \to    w^{(i)}_{k - p\alpha}(0)
$
as
$  \xi_i \to +\infty, \ \ \xi  \in \Xi^{(i)},$ $i=1,2,3.$
Thus,
\begin{equation}\label{trans_1+alfa}
 w^{(1)}_{k - p\alpha}(0)=w^{(2)}_{k - p\alpha }(0)=w^{(3)}_{k - p\alpha }(0),
\end{equation}
and $N_{k - p\alpha} \equiv  \widetilde{N}_{k - p\alpha} \equiv w_{k - p\alpha}^{(1)} (0)$
if $k - p\alpha < 1;$ in this case $N_{k - p\alpha} - G^{(i)}_{k - p\alpha} \equiv 0$ for $i=1, 2, 3.$

For the term $N_1$ in \eqref{junc+alfa} we get  the problem  \eqref{new_junc_probl_1} and its  solvability condition gives  the second transmission condition \eqref{transmisiont1} for $\{w_0^{(i)}\}_{i=1}^3.$  Thus, we arrive to the  limit spectral problem
\begin{equation}\label{limitSpeProb}
 \left\{\begin{array}{rclr}
  -  \dfrac{d^2 w_0^{(i)}}{d{x_i^2}}(x_i) & = &
    \mu_0\, \, w_0^{(i)}(x_i), &                                x_i\in I_i, \quad i=1,2,3,
 \\[2mm]
    w_0^{(i)}(\ell_i) & = &
    0, &           i=1,2,3,
 \\[2mm]
    w_0^{(1)} (0) \ \, = \ \, w_0^{(2)} (0) & = &
    w_0^{(3)} (0), &
  \\[2mm]
    \sum_{i=1}^3  h_i^2 (0) \dfrac{dw_{0}^{(i)}}{dx_i} (0) & = &
    0. &
 \end{array}\right.
\end{equation}
This is the same problem as in the case $\alpha =0.$ Thus, all eigenvalue of  the  limit spectral problem (\ref{limitSpeProb}) are simple and the corresponding eigenfunctions belong to the space $ \mathcal{H}_0$ (recall that $h_i \equiv h_i (0), \ i=1, 2, 3)$
and  they can be uniquely selected  so that the conditions \eqref{EF_lim_prob} and \eqref{EF_addion_lim_prob} are satisfied.
So, we have found  the first terms $\{w_0^{(i)}\}_{i=1}^3,$ $N_0$  and $\mu_0$ in the series \eqref{regul+alfa},  \eqref{junc+alfa} and \eqref{exp-EVl+alfa},  respectively. Here $w_0^{(i)}=W^{(i)}_n$ for some index $n\in \Bbb N$ (the index $n$ is omitted in further calculations).

\medskip

{\bf 2.} If  $p=0,$ then the problem \eqref{N_probl_k+alfa} looks as follows:
\begin{equation}\label{N_k}
\left\{
\begin{array}{rcll}
    -  \Delta_{\xi}N_{k }(\xi)  &=&  \sum_{j=0}^{k-2}
\mu_{j}  \,  N_{k-2 -j}(\xi) , & \xi\in \Xi^{(i)},
\\
-  \Delta_{\xi}N_{k }(\xi)  &=& 0 , & \xi\in \Xi^{(0)},
\\
   \partial_{{\boldsymbol \nu}_\xi}{N_{k}}(\xi) & = &
   0,     & \xi\in \partial \Xi,
\\
 N_{k}(\xi)      & \sim &
   w^{(i)}_{k}(0) + \Psi^{(i)}_{k}(\xi), & \xi_i \to +\infty, \ \ \xi  \in \Xi^{(i)}, \quad i=1,2,3.
\end{array}
\right.
\end{equation}

\begin{remark}\label{rem_to_N_k}
 From \eqref{N_k} it follows  that coefficients of the series \eqref{regul+alfa},  \eqref{junc+alfa} and \eqref{exp-EVl+alfa} with integer indices  are defined independently of the other coefficients.  To define those coefficients we can use all formulas from \S~\ref{inner_asymp}--\ref{justification}  with regard that the right-hand sides vanish on the node $\Xi^{(0)}$ in the corresponding problems for $N_k.$  For example, to define $\{w_1^{(i)}\}_{i=1}^3$  we get the same problem \eqref{omega_probl_1}, but now
\begin{gather}
\boldsymbol{d_{1}^*}
 =  \mu_0 \, w_{0}^{(1)}(0)  \sum\limits_{i=1}^3   \ell_0 h_i^2 (0) , \label{d_1^*alfa}
     \\
 \mu_{1}   =  \left(\mu_0 \, w_{0}^{(1)}(0)\right)^2  \sum\limits_{i=1}^3  \ell_0 h_i^2 (0)
      + \left(\dfrac{\boldsymbol{\delta_1^{(2)}} h^2_2(0)}{\ell_2}
 + \dfrac{ \boldsymbol{\delta_1^{(3)}} h^2_3(0)}{\ell_3}\right) \,   \mu_0 \, w_{0}^{(1)}(0); \label{mu1}
\end{gather}
please compare with  \eqref{d_1^*} and \eqref{mu_1}, respectively. The constants $\boldsymbol{\delta_1^{(2)}}$ and $\boldsymbol{\delta_1^{(3)}}$ are defined with \eqref{delta_1}.
\end{remark}


{\bf 3.}
To understand how to find coefficients with indices $k - p \alpha$, first we take  partial sums of the series \eqref{regul+alfa},  \eqref{junc+alfa} and \eqref{exp-EVl+alfa}  with $M=2.$ For the series \eqref{junc+alfa}
it is the  sum
$$
 \mathcal{N}^{(2)}(\xi) = \sum\limits_{k=0}^{2} \sum\limits_{\tiny{
  \begin{array}{c}
 p   \\
 0\le k - p\alpha  \le 2
  \end{array}
}}
  \varepsilon^{k - p\alpha} \, N_{k - p\alpha}(\xi) .
$$
 A number of terms in this sum is finite and depends on $\alpha.$
 For definiteness and explanation, we assume that $\alpha \in (\frac{2}{3}, 1).$ Then
\begin{equation}\label{part-sum2-inner}
 \mathcal{N}^{(2)} = N_0  + \varepsilon^{1 -\alpha} N_{1 -\alpha} + \varepsilon^{2 -2\alpha} N_{2 -2\alpha} + \varepsilon N_{1} +
 \varepsilon^{2 -\alpha} N_{2 -\alpha} + \varepsilon^{2} N_{2}.
\end{equation}
The coefficients $N_0, N_1, N_2$ are already determined, and we know that $N_{1 - \alpha } \equiv  w_{1 - \alpha}^{(1)} (0)$ and
$N_{2 - 2\alpha } \equiv  w_{2 - 2\alpha}^{(1)} (0)$ (see the first item).

To find the second transmission condition for $\{w_{1  -\alpha}^{(i)}\}_{i=1}^3$ we should consider the problem for  $N_{2 - \alpha}$
\begin{equation}\label{junc2-alfa}
 \left\{\begin{array}{rcll}
  -\Delta_{\xi}{N_{2 - \alpha}}(\xi) & = &
   0,                         &
   \quad \xi\in\Xi^{(i)}, \quad i=1, 2, 3,
 \\[1mm]
  -\Delta_{\xi}{N_{2 - \alpha}}(\xi) & = & \mu_0 \, w_0^{(1)} (0)\, \varrho_0(\xi),   &
   \quad \xi\in\Xi^{(0)},
 \\[1mm]
   \partial_{\nu_\xi}{N_{2 - \alpha}}(\xi) & = &
   0,                               &
   \quad \xi\in \partial \Xi,
 \\
   N_{2 - \alpha}(\xi)                                                               & \sim &
   w^{(i)}_{2 - \alpha}(0) + \xi_i \dfrac{dw_{1 - \alpha}^{(i)}}{dx_i}(0),                  &
   \quad \xi_i \to +\infty, \ \ {\xi}_i \in \Xi^{(i)}, \ i=1,2,3.
 \end{array}\right.
\end{equation}
The solvability condition for the corresponding problem for $\widetilde{N}_{2 - \alpha}$ looks as follows:
\begin{equation}\label{w_1-alfa}
   \sum\limits_{i=1}^3 h_i^2 (0) \,\dfrac{d w_{1-\alpha}^{(i)}}{dx_i} (0) =
- \frac{\mu_0}{\pi} \, w_{0}^{(1)}(0) \, \int_{\Xi^{(0)}} \varrho_0(\xi)  \, d\xi  .
\end{equation}
Thus, for $\widetilde{w}_{1-\alpha}=\{w_{1 - \alpha}^{(i)}\}_{i=1}^3$ we have the problem
\begin{equation}\label{w1-alpfa}
 \left\{\begin{array}{rclr}
  -  \dfrac{d^2 w_{1 - \alpha}^{(i)}}{d{x_i^2}}(x_i) & = &
    \mu_0\, \, w_{1- \alpha}^{(i)}(x_i) + \mu_{1- \alpha}\,  w_{0}^{(i)}(x_i), &   x_i\in I_i,
 \\[4mm]
    w_{1- \alpha}^{(i)}(\ell_i) & = &
    0, &           i=1,2,3,
 \\[2mm]
    w_{1-  \alpha}^{(1)} (0) \ \, = \ \, w_{1 - \alpha}^{(2)} (0) & = &
    w_{1 - \alpha}^{(3)} (0), &
  \\[2mm]
    \sum_{i=1}^3  h_i^2 (0) \dfrac{dw_{1 - \alpha}^{(i)}}{dx_i} (0) & = & - \displaystyle{ \frac{\mu_0}{\pi} \, w_{0}^{(1)}(0) \, \int_{\Xi^{(0)}} \varrho_0(\xi)  \, d\xi } .&
 \end{array}\right.
\end{equation}
Applying the Fredholm theory (see the end of \S~\ref{sub_3*4}),  we find that
\begin{equation} \label{mu1-alfa}
  \mu_{1-\alpha}   = -  \frac{\left(\mu_0 \, w_{0}^{(1)}(0)\right)^2}{\pi}  \, \int_{\Xi^{(0)}} \varrho_0(\xi)  \, d\xi
 \end{equation}
and there exists a unique solution to \eqref{w1-alpfa} such that $ \langle \widetilde{w}_{1 - \alpha}, \widetilde{w}_0 \rangle_0 =0.$
In addition, $N_{1 - \alpha} \equiv  w_{1 - \alpha}^{(1)} (0).$
Since $\widetilde{w}_{1 - \alpha} \in \mathcal{H}_0,$ it follows from \eqref{limit_identity} that $\big( \widetilde{w}_{1 - \alpha}, \widetilde{w}_0 \big)_{\mathcal{V}_0} =0$ as well.

Thanks to Proposition~\ref{tverd1} we can state that the problem \eqref{junc2-alfa} has a unique solution if and only if
\begin{equation}\label{trans1}
w_{2-\alpha}^{(1)}(0) =  w_{2-\alpha}^{(2)}(0) - \boldsymbol{\delta_{2-\alpha}^{(2)}} = w_{2-\alpha}^{(3)}(0) - \boldsymbol{\delta_{2-\alpha}^{(3)}},
\end{equation}
 where $\boldsymbol{\delta_{2-\alpha}^{(2)}}$ and $\boldsymbol{\delta_{2-\alpha}^{(3)}}$ are defined by formulas \eqref{const_d_0}.

To find the second transmission condition for $\{w_{2  -\alpha}^{(i)}\}_{i=1}^3,$ we should consider the problem
\begin{equation}\label{junc3-alfa}
 \left\{\begin{array}{rcll}
  -\Delta_{\xi}{N_{3 - \alpha}}(\xi) & = &
   \mu_0 N_{1-\alpha}(\xi) + \mu_{1-\alpha} w_0^{(1)} (0),                         &
   \quad \xi\in\Xi^{(i)}, \quad i=1, 2, 3,
 \\[1mm]
  -\Delta_{\xi}{N_{3 - \alpha}}(\xi) & = & \varrho_0(\xi)\left(\mu_0 N_1(\xi) +  \mu_1 \, w_0^{(1)} (0)\right) ,   &
   \quad \xi\in\Xi^{(0)},
 \\[1mm]
   \partial_{\nu_\xi}{N_{3 - \alpha}}(\xi) & = &
   0,                               &
   \quad \xi\in \partial \Xi,
 \\
   N_{3 - \alpha}(\xi)                                                               & \sim &
   w^{(i)}_{3 - \alpha}(0) + \xi_i \dfrac{dw_{2 - \alpha}^{(i)}}{dx_i}(0) + \dfrac{\xi^2_i}{2}  \dfrac{d^2w_{1 - \alpha}^{(i)}}{dx_i^2}(0) ,                  &
   \quad \xi_i \to +\infty,  \ i=1,2,3.
 \end{array}\right.
\end{equation}
\begin{remark}
 From the second equation in \eqref{junc3-alfa} we see  that coefficients with integer indices begin to mix with others and affect their determination.
\end{remark}
The solvability condition of the corresponding problem for $\widetilde{N}_{3 - \alpha}$ gives the second transmission condition
for $\widetilde{w}_{2  -\alpha}$ and as a result, we get the problem
\begin{equation}\label{w2-alpfa}
 \left\{\begin{array}{rclr}
  -  \dfrac{d^2 w_{2 - \alpha}^{(i)}}{d{x_i^2}}(x_i)-  \mu_0\, \, w_{2- \alpha}^{(i)}(x_i)  & = &
     \mu_{1- \alpha}\,  w_{1}^{(i)}(x_i) + \mu_{1}\,  w_{1- \alpha}^{(i)}(x_i)
  + \mu_{2- \alpha}\,  w_{0}^{(i)}(x_i)    , &   x_i\in I_i,
 \\[4mm]
    w_{2- \alpha}^{(i)}(\ell_i) & = &
    0, &           i=1,2,3,
 \\[2mm]
    w_{2-  \alpha}^{(1)} (0) \ \, = \ \, w_{2 - \alpha}^{(2)} (0) - \boldsymbol{\delta_{2-\alpha}^{(2)}} & = &
    w_{2 - \alpha}^{(3)} (0) - \boldsymbol{\delta_{2-\alpha}^{(3)}}, &
  \\[2mm]
    \sum_{i=1}^3  h_i^2 (0) \dfrac{dw_{2 - \alpha}^{(i)}}{dx_i} (0) & = &\boldsymbol{d_{2  - \alpha}^*} ,&
 \end{array}\right.
\end{equation}
where $\boldsymbol{d_{2  - \alpha}^*}$ is defined with the help of \eqref{const_d*k-p*alpfa} and
$$
\boldsymbol{d_{2  - \alpha}^*} =
\sum_{i=1}^{3} \ell_0 h_i^2(0)  \left(
    \mu_0 w_{1- \alpha}^{(i)}(0) + \mu_{1-\alpha} w_{0}^{(i)}(0) \right)
-   \displaystyle{ \frac{1}{\pi} \,  \int_{\Xi^{(0)}} \varrho_0(\xi)
    \left(  \mu_0 N_{1}(\xi) + \mu_{1} w_{0}^{(1)}(0) \right)   d\xi } .
$$
Similar as in  \S~\ref{sub_3*4},  we find
\begin{equation}\label{mu_2-alfa}
  \mu_{2  - \alpha} =  \mu_0 \, w_{0}^{(1)}(0) \, \boldsymbol{ D_{2  - \alpha}}
-  \mu_0  \sum_{i=1}^3 \int_0^{\ell_i} \Phi_{2  - \alpha}^{(i)}(x_i; \mu_{1-\alpha}, \mu_{1}) \, w_{0}^{(i)} (x_i) \, dx_i
\end{equation}
and a unique solution to the problem \eqref{w2-alpfa}. Here  $\boldsymbol{ D_{2  - \alpha}}$ is defined with \eqref{D_k} and
$\Phi_{2  - \alpha}^{(i)}(x_i; \mu_{1-\alpha}, \mu_{1}) = h_i(0) \left(\mu_{1- \alpha}\,  w_{1}^{(i)}(x_i) + \mu_{1}\,  w_{1- \alpha}^{(i)}(x_i)\right).$

It remains to find $\{w_{2 - 2\alpha}^{(i)}\}_{i=1}^3.$ For this we should consider the problem
\begin{equation}\label{junc3-2alfa}
 \left\{\begin{array}{rcll}
  -\Delta_{\xi}{N_{3 - 2\alpha}}(\xi) & = & 0,                         &
   \quad \xi\in\Xi^{(i)}, \quad i=1, 2, 3,
 \\[1mm]
  -\Delta_{\xi}{N_{3 - 2\alpha}}(\xi) & = & \varrho_0(\xi)\left(\mu_0 w_{1-\alpha}^{(1)}(0)  +  \mu_{1-\alpha} \, w_0^{(1)} (0)\right) ,   &
   \quad \xi\in\Xi^{(0)},
 \\[1mm]
   \partial_{\nu_\xi}{N_{3 - 2\alpha}}(\xi) & = &
   0,                               &
   \quad \xi\in \partial \Xi,
 \\
   N_{3 - 2\alpha}(\xi)                                                               & \sim &
   w^{(i)}_{3 - 2\alpha}(0) + \xi_i \dfrac{dw_{2 - 2\alpha}^{(i)}}{dx_i}(0),                  &
   \quad \xi_i \to +\infty,  \ i=1,2,3.
 \end{array}\right.
\end{equation}
The solvability condition for the corresponding problem for $\widetilde{N}_{3 - 2\alpha}$ gives the second transmission condition
for $\widetilde{w}_{2  -2\alpha}$ and as a result, we get the problem
\begin{equation}\label{w2-2alpfa}
 \left\{\begin{array}{rclr}
  -  \dfrac{d^2 w_{2 - 2\alpha}^{(i)}}{d{x_i^2}}(x_i)-  \mu_0\, \, w_{2- 2\alpha}^{(i)}(x_i)  & = &
     \mu_{1- \alpha}\,  w_{1-\alpha}^{(i)}(x_i)   + \mu_{2- 2\alpha}\,  w_{0}^{(i)}(x_i)    , &   x_i\in I_i,
 \\[4mm]
    w_{2- 2\alpha}^{(i)}(\ell_i) & = &
    0, &           i=1,2,3,
 \\[2mm]
    w_{2-  2\alpha}^{(1)} (0) \ \, = \ \, w_{2 - 2\alpha}^{(2)} (0) & = &
    w_{2 - 2 \alpha}^{(3)} (0) , &
  \\[2mm]
    \sum_{i=1}^3  h_i^2 (0) \dfrac{dw_{2 - 2\alpha}^{(i)}}{dx_i} (0) & = &\boldsymbol{d_{2  - 2\alpha}^*} ,&
 \end{array}\right.
\end{equation}
where
$$
\boldsymbol{d_{2  - 2\alpha}^*} = -   \displaystyle{ \frac{1}{\pi} \, \left( \mu_0 w_{1-\alpha}^{(1)}(0) + \mu_{1-\alpha} w_{0}^{(1)}(0) \right) \int_{\Xi^{(0)}} \varrho_0(\xi)\, d\xi } .
$$
Similar as in  \S~\ref{sub_3*4} we find
\begin{equation}\label{mu_2-2alfa}
  \mu_{2  - 2\alpha} = \mu_0 \, w_{0}^{(1)}(0)\, \boldsymbol{d_{2  - 2\alpha}^*}
-  \mu_0  \sum_{i=1}^3 \int_0^{\ell_i} \Phi_{2  - \alpha}^{(i)}(x_i; \mu_{1-\alpha}) \, w_{0}^{(i)} (x_i) \, dx_i
\end{equation}
and a unique solution to the problem \eqref{w2-alpfa}. Here
$\Phi_{2  - 2\alpha}^{(i)}(x_i; \mu_{1-\alpha}) = h_i(0)\,  \mu_{1- \alpha}\,  w_{1- \alpha}^{(i)}(x_i).$ In addition,
$N_{2 - 2\alpha} \equiv  w_{2 - 2\alpha}^{(1)} (0).$

Thus, for $\alpha \in \left( \frac{2}{3}, 1\right)$ we have defined all coefficients in the partial sums  \eqref{part-sum2-inner},
\begin{gather*}
    \mathcal{U}_2^{(i)} = w_{0}^{(i)}   + \varepsilon^{1 -\alpha} w^{(i)}_{1 -\alpha} + \varepsilon^{2 -2\alpha} w^{(i)}_{2 -2\alpha} + \varepsilon w^{(i)}_{1} +
 \varepsilon^{2 -\alpha} w^{(i)}_{2 -\alpha} + \varepsilon^{2} w^{(i)}_{2}, \quad i=1, 2, 3;
   \\
   \mathcal{L}^{(2)} = \mu_0  + \varepsilon^{1 -\alpha} \mu_{1 -\alpha} + \varepsilon^{2 -2\alpha} \mu_{2 -2\alpha} + \varepsilon \mu_{1} +
 \varepsilon^{2 -\alpha} \mu_{2 -\alpha} + \varepsilon^{2} \mu_{2}.
\end{gather*}

\medskip

{\bf 4.}
Now let us take $M=3$ and for definiteness additionally  assume  that $\alpha \in \left( \frac{2}{3}, \frac{3}{4}\right)$
(the closer the parameter $\alpha$ to $1,$  the more terms between $\varepsilon^0$ and $\varepsilon^1).$
 Then the partial sum for the series~\eqref{junc+alfa} is
\begin{multline}\label{part-sum3-inner}
 \mathcal{N}^{(3)} = N_0  + \varepsilon^{3 -4\alpha} N_{3 -4\alpha} +
 \varepsilon^{1 -\alpha} N_{1 -\alpha} + \varepsilon^{2 -2\alpha} N_{2 -2\alpha} + \varepsilon^{3 -3\alpha} N_{3 -3\alpha} + \varepsilon N_{1}
 \\
  + \varepsilon^{2 -\alpha} N_{2 -\alpha} + \varepsilon^{3 -2\alpha} N_{3 -2\alpha} +  \varepsilon^{2} N_{2} + \varepsilon^{3 -\alpha}
  N_{3 -\alpha}
 +  \varepsilon^{3} N_{3}.
\end{multline}
We see that new terms $N_{3 -4\alpha}, \, N_{3 -3\alpha}$ and
$N_{3 -2\alpha}, \, N_{3 -\alpha}$ appear between the terms with integer powers of $\varepsilon.$
By virtue of Remark~\ref{rem_to_N_k} we can regard that all terms $N_{3},$ $\{w^{(i)}_3\}_{i=1}^3,$ $\mu_3$
are uniquely defined.  The coefficients $N_{3 -2\alpha}$ and $N_{3 -\alpha}$ are solutions to the problems  \eqref{junc3-2alfa} and \eqref{junc3-alfa}, respectively (see the items 2,  3 and \S\ref{inner_asymp}).
 Only the coefficients $N_{3 -4\alpha}$ and $N_{3 -3\alpha}$ have not yet been determined.
From the first item it follows that $N_{3 - 4\alpha } \equiv  w_{3 - 4\alpha}^{(1)} (0)$ and
$N_{3 - 3\alpha } \equiv  w_{3 - 3\alpha}^{(1)} (0).$

 To find $\{w^{(i)}_{3 - 4\alpha}\}_{i=1}^3,$ we consider the problem
 \begin{equation}\label{junc_probl4-4alfa}
 \left\{\begin{array}{rcll}
  -\Delta_{\xi}{N_{4 -4 \alpha}}(\xi) & = &
   0,                         &
   \quad \xi\in\Xi,
 \\[1mm]
   \partial_{\nu_\xi}{N_{4-4 \alpha}}(\xi) & = &
   0,                               &
   \quad \xi\in \partial \Xi,
 \\
   N_{4-4 \alpha}(\xi)                                                               & \sim &
   w^{(i)}_{4-4 \alpha}(0) + \xi_i \dfrac{d w_{3-4 \alpha}^{(i)}}{dx_i}(0),                  &
   \quad \xi_i \to +\infty, \ \ {\xi}_i \in \Xi^{(i)}, \ i=1,2,3.
 \end{array}\right.
\end{equation}
With the  substitution \eqref{new-solution} the problem \eqref{junc_probl4-4alfa} is reduced to the corresponding problem for
$\widetilde{N}_{4 -4 \alpha}$ and its solvability condition (see Proposition~\ref{tverd1}) gives the second Kirchhoff condition for
$\{w_{3 - 4\alpha}^{(i)}\}_{i=1}^3.$  As a result, we get the problem
 \begin{equation}\label{main3-4alpfa<1}
 \left\{\begin{array}{rclr}
  -  \dfrac{d^2 w_{3-4\alpha}^{(i)}}{d{x_i^2}}(x_i) - \mu_0\,  w_{3-4\alpha}^{(i)}(x_i)  & = &
     \mu_{3-4\alpha}\,  w_{0}^{(i)}(x_i), &   x_i\in I_i,
 \\[2mm]
    w_{3-4\alpha}^{(i)}(\ell_i) & = &
    0, &           i=1,2,3,
 \\[2mm]
    w_{3-4\alpha}^{(1)} (0) \ \, = \ \, w_{3-4\alpha}^{(2)} (0) & = &
    w_{3-4\alpha}^{(3)} (0), &
  \\[2mm]
    \sum_{i=1}^3  h_i^2 (0) \dfrac{dw_{3-4\alpha}^{(i)}}{dx_i} (0) & = &
    0.&
 \end{array}\right.
\end{equation}
Since $\mu_0$ is an eigenvalue and $\widetilde{w}_0$ is the corresponding eigenfunction of the limit spectral problem
\eqref{limitSpeProb}, we deduce that $\mu_{3-4\alpha} =0$ and $w_{3-4\alpha}^{(i)}\equiv 0, \ i=1, 2, 3$
(see the end of \S~\ref{sub_3*4}). This means that $N_{3-4\alpha}\equiv 0$ and $N_{4 - 4\alpha } \equiv  w_{4 - 4\alpha}^{(1)} (0).$
Recall that these relations are obtained under the assumption that $\alpha \in \left( \frac{2}{3}, \frac{3}{4}\right).$

For all $\alpha \in (0,1 )$  we can similarly prove that  $N_{k_0 - p_0 \alpha}\equiv w_{k_0 - p_0\alpha}^{(i)}\equiv 0, \ i=1, 2, 3,$ and $\mu_{k_0 - p_0\alpha}= 0$ if  $k_0 - p_0\, \alpha < 1 - \alpha.$ Thus, the following statement holds.

\begin{proposition}\label{prop_4_1}
The second terms in the asymptotics both for an eigenvalue and the corresponding eigenfunction of the problem \eqref{1.1} have the order $\varepsilon^{1-\alpha}.$
\end{proposition}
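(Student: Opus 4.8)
The plan is to prove that every coefficient of the expansions \eqref{regul+alfa}, \eqref{junc+alfa}, \eqref{exp-EVl+alfa} whose index $\beta := k_0 - p_0\alpha$ lies strictly between $0$ and $1-\alpha$ vanishes, and then to observe that the coefficients carrying the index $1-\alpha$ (already computed in \eqref{mu1-alfa} and \eqref{w1-alpfa}) are not all zero; together these two facts pin down $\varepsilon^{1-\alpha}$ as the order of the second term. The delicate point is that, since $\alpha$ is irrational, the admissible indices $\{k-p\alpha\}$ accumulate at $0$ from above, so one cannot argue ``by induction on the smallest positive index.'' Instead I would induct on $p_0$, the multiplicity of $\alpha$ in the index.

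First I would set up the induction. Let $H(P)$ be the assertion that $\mu_{k-p\alpha}=0$, $w^{(i)}_{k-p\alpha}\equiv 0$ and $N_{k-p\alpha}\equiv 0$ for every pair $(k,p)$ with $p\le P$ and $0<k-p\alpha<1-\alpha$. The base cases $P=0,1$ are vacuous, because $k-p\alpha\in(0,1-\alpha)$ forces $p\alpha<k<1+(p-1)\alpha$, an interval of length $1-\alpha<1$ that contains no integer when $p\in\{0,1\}$. For the inductive step I fix $(k_0,p_0)$ with $\beta\in(0,1-\alpha)$ and assume $H(p_0-1)$.

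Second, I would reduce the three defining relations for the $\beta$-coefficients to a homogeneous problem. Because $\beta<1$, the matching polynomial $\Psi^{(i)}_{\beta}$ vanishes (its entries carry indices $\beta-j<0$) and both right-hand sides in \eqref{N_probl_k+alfa} have total indices $\beta-2<0$ and $\beta-(2-\alpha)<0$; hence $N_\beta$ solves the homogeneous problem \eqref{junc_probl_general-k=0}, so $N_\beta\equiv w^{(1)}_\beta(0)$ and the first transmission condition gives continuity at the vertex. In the ODE \eqref{omega_probl_alfa_k} I separate the diagonal term $\mu_0 w^{(i)}_\beta$ and the term $\mu_\beta w^{(i)}_0$; every remaining product $\mu_{\gamma_1}w^{(i)}_{\gamma_2}$ has $\gamma_1+\gamma_2=\beta$ with $0<\gamma_1,\gamma_2<\beta$, and a genuinely nonzero such product needs $\gamma_2>0$, which forces its $\alpha$-multiplicity to be strictly less than $p_0$; by $H(p_0-1)$ one gets $w^{(i)}_{\gamma_2}\equiv 0$. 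Thus the equation collapses to $-\,d^2 w^{(i)}_\beta/dx_i^2-\mu_0 w^{(i)}_\beta=\mu_\beta w^{(i)}_0$. For the second transmission constant $\boldsymbol{d_{\beta}^*}$ in \eqref{const_d*k-p*alpfa} a pure index count suffices: its three sums carry total indices $\le\beta-1$, $\beta-1$, and $\beta-(1-\alpha)$, all negative, so $\boldsymbol{d_{\beta}^*}=0$ without even invoking the inductive hypothesis.

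Finally, the resulting problem for $\{w^{(i)}_\beta\}$ is exactly of the type \eqref{main3-4alpfa<1}: a homogeneous Sturm--Liouville system on the graph with spectral parameter $\mu_0$ and forcing $\mu_\beta w_0^{(i)}$. Since $\mu_0=\Lambda_n$ is a simple eigenvalue of \eqref{limitSpeProb} with eigenfunction $\widetilde w_0$, the Fredholm alternative forces the solvability condition $\mu_\beta(\widetilde w_0,\widetilde w_0)_{\mathcal V_0}=0$, whence $\mu_\beta=0$, and then the normalization $\langle\widetilde w_\beta,\widetilde w_0\rangle_0=0$ gives $w^{(i)}_\beta\equiv 0$ and $N_\beta\equiv 0$; this proves $H(p_0)$ and closes the induction, so all coefficients with index in $(0,1-\alpha)$ vanish. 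To conclude, I would note that the index-$(1-\alpha)$ coefficients do not all vanish: the second Kirchhoff condition in \eqref{w1-alpfa} has right-hand side $-\tfrac{\mu_0}{\pi}w_0^{(1)}(0)\int_{\Xi^{(0)}}\varrho_0\,d\xi$, which is nonzero because $\varrho_0\ge c_0>0$ on $\Xi^{(0)}$ and $\mu_0 w_0^{(1)}(0)\neq 0$, so $w_{1-\alpha}\not\equiv 0$ (and $\mu_{1-\alpha}\neq 0$ by \eqref{mu1-alfa}). Hence the leading correction sits precisely at order $\varepsilon^{1-\alpha}$. The main obstacle, and the step I would handle most carefully, is the bookkeeping guaranteeing that every nontrivial product in the reduced equation has strictly smaller $\alpha$-multiplicity, so that the induction on $p_0$ closes despite the accumulation of indices at $0$.
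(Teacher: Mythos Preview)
Your argument is correct and follows the same mechanism as the paper, but is considerably more careful. The paper treats a single illustrative case (the index $3-4\alpha$ for $\alpha\in(\tfrac23,\tfrac34)$), reduces it to the homogeneous problem \eqref{main3-4alpfa<1}, applies the Fredholm alternative to obtain $\mu_{3-4\alpha}=0$ and $w^{(i)}_{3-4\alpha}\equiv0$, and then simply asserts that ``for all $\alpha\in(0,1)$ we can similarly prove'' the vanishing for every index $k_0-p_0\alpha<1-\alpha$. Your induction on the $\alpha$-multiplicity $p_0$ supplies exactly what the paper leaves to the word ``similarly''; in particular, your observation that $m=0$ forces $j=0$ (so every nondiagonal cross-term in \eqref{omega_probl_alfa_k} has $\alpha$-multiplicity $\le p_0-1$ in the $w$-factor) is the right bookkeeping to close the induction despite the accumulation of indices at $0$.

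One caveat on your final paragraph: the claim $\mu_0\,w_0^{(1)}(0)\ne0$ is not justified in general. Eigenfunctions of the limit problem \eqref{limitSpeProb} may vanish at the vertex (for instance when two of the $\ell_i$ coincide so that a common Dirichlet eigenvalue on two edges produces an eigenfunction with $W_n^{(1)}(0)=0$ satisfying the flux condition). The paper avoids this issue by reading the proposition only as the vanishing statement for indices in $(0,1-\alpha)$; it does not claim $\mu_{1-\alpha,n}\ne0$, and indeed formula \eqref{mu_1-alfa_n} shows $\mu_{1-\alpha,n}=0$ whenever $W_n^{(1)}(0)=0$ (cf.\ the two-term expansion \eqref{t5+}). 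So your vanishing argument is the proof; the nonvanishing addendum should be dropped or qualified as a generic statement.
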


The problem \eqref{N_probl_k+alfa} with $k=4$ and $p=3$ looks as follows:
\begin{equation}\label{junc4-3alfa}
 \left\{\begin{array}{rcll}
  -\Delta_{\xi}{N_{4 - 3\alpha}}(\xi) & = & 0,                         &
    \xi\in\Xi^{(i)}, \ i=1, 2, 3,
 \\[1mm]
  -\Delta_{\xi}{N_{4 - 3\alpha}}(\xi) & = & \varrho_0(\xi)\left(\mu_0 w_{2-2\alpha}^{(1)}(0)  +  \mu_{1-\alpha} w_{1-\alpha}^{(1)}(0)
   +  \mu_{2-2\alpha}  w_0^{(1)} (0)\right),   &
    \xi\in\Xi^{(0)},
 \\[1mm]
   \partial_{\nu_\xi}{N_{4 - 3\alpha}}(\xi) & = &
   0,                               &
    \xi\in \partial \Xi,
 \\
   N_{4 - 3\alpha}(\xi)                                                               & \sim &
   w^{(i)}_{4 - 3\alpha}(0) + \xi_i \dfrac{dw_{3 - 3\alpha}^{(i)}}{dx_i}(0),                  &
   \xi_i \to +\infty,  \ i=1,2,3.
 \end{array}\right.
\end{equation}
The solvability condition of the corresponding problem for $\widetilde{N}_{4 - 3\alpha}$ yields the second transmission condition
for $\widetilde{w}_{3  -3\alpha}$ and as a result, we get the problem
\begin{equation}\label{w3-3alpfa}
 \left\{\begin{array}{rclr}
  -  \dfrac{d^2 w_{3 - 3\alpha}^{(i)}}{d{x_i^2}} -  \mu_0 w_{3- 3\alpha}^{(i)}& = &
     \mu_{1- \alpha} w_{2-2\alpha}^{(i)}(x_i)   + \mu_{2- 2\alpha} w_{1-\alpha}^{(i)}(x_i) + \mu_{3- 3\alpha}  w_{0}^{(i)}(x_i)    , &   x_i\in I_i,
 \\[2mm]
    w_{3- 3\alpha}^{(i)}(\ell_i) & = &
    0, &           i=1,2,3,
 \\[2mm]
    w_{3-  3\alpha}^{(1)} (0) \ \, = \ \, w_{3 - 3\alpha}^{(2)} (0) & = &
    w_{3 - 3 \alpha}^{(3)} (0) , &
  \\[2mm]
    \sum_{i=1}^3  h_i^2 (0) \dfrac{dw_{3 - 3\alpha}^{(i)}}{dx_i} (0) & = &\boldsymbol{d_{3  - 3\alpha}^*} ,&
 \end{array}\right.
\end{equation}
where
$$
\boldsymbol{d_{3  - 3\alpha}^*} = -   \displaystyle{ \frac{1}{\pi} \,  \left(\mu_0\, w_{2-2\alpha}^{(1)}(0)  +  \mu_{1-\alpha} \,w_{1-\alpha}^{(1)}(0)
   +  \mu_{2-2\alpha} \, w_0^{(1)} (0)\right) \int_{\Xi^{(0)}} \varrho_0(\xi)\,   d\xi } .
$$
By the same way as at the end of \S~\ref{sub_3*4} we get
\begin{equation}\label{mu_2-2alfa}
  \mu_{3  - 3\alpha} = \mu_0 \, w_{0}^{(1)}(0)\, \boldsymbol{d_{3  - 3\alpha}^*}
-  \mu_0  \sum_{i=1}^3 \int_0^{\ell_i} \Phi_{3  - 3\alpha}^{(i)}(x_i; \mu_{1-\alpha}, \mu_{2-2\alpha}) \, w_{0}^{(i)} (x_i) \, dx_i
\end{equation}
and a unique solution to the problem \eqref{w3-3alpfa}. Here
$$
\Phi_{3  - 3\alpha}^{(i)}(x_i; \mu_{1-\alpha},\mu_{2-2\alpha}) = h_i(0)\left( \mu_{1- \alpha} w_{2- 2\alpha}^{(i)}(x_i) + \mu_{2- 2\alpha} w_{1-\alpha}^{(i)}(x_i)\right).
$$

Similar as before,  we find $\mu_{3  -2\alpha}, \ \mu_{3  - \alpha}$ and  $\{w^{(i)}_{3 -2\alpha}\}_{i=1}^3, \
\{w^{(i)}_{3 -\alpha}\}_{i=1}^3$ from the problems
\begin{equation}\label{w3-2alpfa}
 \left\{\begin{array}{rclr}
  -  \dfrac{d^2 w_{3 - 2\alpha}^{(i)}}{d{x_i^2}} -  \mu_0 w_{3- 2\alpha}^{(i)}  & = & \mu_{1}  w_{2 -2\alpha}^{(i)}(x_i) +
    \mu_{1- \alpha}  w_{2 -\alpha}^{(i)}(x_i) + \mu_{2-\alpha} w_{1- \alpha}^{(i)}(x_i) &
    \\
   &  &  + \, \mu_{2- 2\alpha}  w_{1}^{(i)}(x_i)   + \mu_{3- 2\alpha}  w_{0}^{(i)}(x_i), &   x_i\in I_i,
 \\[2mm]
    w_{3- 2\alpha}^{(i)}(\ell_i) & = &
    0, &           i=1,2,3,
 \\[2mm]
    w_{3-  2\alpha}^{(1)} (0) & = &  w_{3 - 2\alpha}^{(2)} (0) - \boldsymbol{\delta_{3-2\alpha}^{(2)}} \ \, = \ \,
    w_{3 - 2\alpha}^{(3)} (0) - \boldsymbol{\delta_{3-2\alpha}^{(3)}}, &
  \\[2mm]
    \sum_{i=1}^3  h_i^2 (0) \dfrac{dw_{3 - 2\alpha}^{(i)}}{dx_i} (0) & = &\boldsymbol{d_{3  - 2\alpha}^*} ,&
 \end{array}\right.
\end{equation}
and
\begin{equation}\label{w3-alpfa}
 \left\{\begin{array}{rclr}
  -  \dfrac{d^2 w_{3 - \alpha}^{(i)}}{d{x_i^2}} -  \mu_0 w_{3- \alpha}^{(i)}  & = & \mu_{1}  w_{2 -\alpha}^{(i)}(x_i) +
    \mu_{1- \alpha}  w_{2}^{(i)}(x_i) + \mu_{2} w_{1- \alpha}^{(i)}(x_i) &
    \\
   &  &  + \, \mu_{2- \alpha}  w_{1}^{(i)}(x_i)   + \mu_{3- \alpha}  w_{0}^{(i)}(x_i), &   x_i\in I_i,
 \\[2mm]
    w_{3- \alpha}^{(i)}(\ell_i) & = &
    0, &           i=1,2,3,
 \\[2mm]
    w_{3-  \alpha}^{(1)} (0) & = &  w_{3 - \alpha}^{(2)} (0) - \boldsymbol{\delta_{3-\alpha}^{(2)}} \ \, = \ \,
    w_{3 - \alpha}^{(3)} (0) - \boldsymbol{\delta_{3-\alpha}^{(3)}}, &
  \\[2mm]
    \sum_{i=1}^3  h_i^2 (0) \dfrac{dw_{3 - \alpha}^{(i)}}{dx_i} (0) & = &\boldsymbol{d_{3  - \alpha}^*} ,&
 \end{array}\right.
\end{equation}
where
\begin{multline*}
\boldsymbol{d_{3  - 2\alpha}^*} =
\sum_{i=1}^{3} \ell_0 h_i^2(0)  \left(
    \mu_0 w_{2- 2\alpha}^{(i)}(0) + \mu_{1-\alpha} w_{1-\alpha}^{(i)}(0) + \mu_{2-2\alpha} w_{0}^{(i)}(0) \right)
 \\
   -  \displaystyle{ \frac{1}{\pi} \,  \int_{\Xi^{(0)}} \varrho_0(\xi)
    \left(  \mu_0 N_{2-\alpha }(\xi) + \mu_{1} w_{1-\alpha}^{(1)}(0)  + \mu_{1-\alpha} N_{1 }(\xi) + \mu_{2-\alpha} w_{0}^{(1)}(0) \right)   d\xi},
 \end{multline*}
\begin{multline*}
\boldsymbol{d_{3  - \alpha}^*} =
\sum_{i=1}^{3} \ell_0 h_i^2(0)   \left(  \mu_0 w_{2- \alpha}^{(i)}(0) + \sum_{m=0}^{1} \mu_{1- m\alpha} w_{1- (1-m)\alpha}^{(i)}(0) + \mu_{2-\alpha} w_{0}^{(i)}(0) \right)
\\
+ \sum_{i=1}^{3}  h_i^2(0)
     \frac{\ell_0^2}{2} \left(\mu_0 \frac{dw_{1- \alpha}^{(i)}}{dx_i}(0) + \mu_{1-\alpha}\frac{dw_{1}^{(i)}}{dx_i}(0) \right)
- \frac{\mu_0}{\pi} \sum\limits_{i=1}^3 \int_{\Xi^{(i)}}
\left( N_{2-\alpha}(\xi) - G^{(i)}_{2-\alpha}(\xi_i)
\right) d\xi
\\
- \frac{\mu_{1-\alpha}}{\pi} \sum\limits_{i=1}^3 \int_{\Xi^{(i)}}
\left( N_{1}(\xi) - G^{(i)}_{1}(\xi_i) \right) d\xi
    -  \displaystyle{ \frac{1}{\pi} \,  \int_{\Xi^{(0)}} \varrho_0(\xi)
    \sum_{j=0}^{2}  \mu_j N_{2-j}(\xi)\, d\xi},
 \end{multline*}
and the constants $\boldsymbol{\delta_{3-2\alpha}^{(i)}}, \ \boldsymbol{\delta_{3-\alpha}^{(i)}}, \ i=2,3,$ are defined
by formulas \eqref{const_d_0}.

Thus, for $\alpha \in \left( \frac{2}{3}, \frac{3}{4}\right)$ we have defined all coefficients in the partial sums  \eqref{part-sum3-inner},
\begin{gather*}
    \mathcal{U}_3^{(i)} = w_{0}^{(i)}   + \varepsilon^{1 -\alpha} w^{(i)}_{1 -\alpha} + \varepsilon^{2 -2\alpha} w^{(i)}_{2 -2\alpha} +
    \varepsilon^{3 -3\alpha} w^{(i)}_{3 -3\alpha} + \varepsilon w^{(i)}_{1}
     +
\varepsilon^{2 -\alpha} w^{(i)}_{2 -\alpha}
\\
 + \varepsilon^{3 -2\alpha} w^{(i)}_{3 -2\alpha}
+
 \varepsilon^{2} w^{(i)}_{2} +
 \varepsilon^{3 -\alpha} w^{(i)}_{3 -\alpha}  + \varepsilon^{3} w^{(i)}_{3}, \quad i=1, 2, 3;
    \\[2mm]
   \mathcal{L}^{(3)} = \mu_0  + \varepsilon^{1 -\alpha} \mu_{1 -\alpha} + \varepsilon^{2 -2\alpha} \mu_{2 -2\alpha} +
   \varepsilon^{3 -3\alpha} \mu_{3 -3\alpha} + \varepsilon \mu_{1} +
 \varepsilon^{2 -\alpha} \mu_{2 -\alpha}
  \\
  + \varepsilon^{3 -2\alpha} \mu_{3 -2\alpha} + \varepsilon^{2} \mu_{2} + \varepsilon^{3 -\alpha} \mu_{3 -\alpha} + \varepsilon^{3} \mu_{3}.
\end{gather*}

\subsection{Justification  $(\alpha$ is an irrational number)}\label{subsec_just+alpfa}

From \S~\ref{irrational} we see that all terms of series \eqref{regul+alfa}, \eqref{junc+alfa}, \eqref{exp-EVl+alfa}  can be uniquely defined.
With the help of the cut-off functions \eqref{cut-off-functions} we construct the series
\begin{equation}\label{asymp_expansion+alpha}
\sum\limits_{k=0}^{+\infty} \sum\limits_{p=0}^{+\infty}  \varepsilon^{k - p\alpha }
    \Big(
    \overline{u  }_{k - p\alpha} (x; \, \varepsilon, \, \gamma)
  + \overline{N  }_{k - p\alpha} (x; \, \varepsilon, \, \gamma)
    \Big),
\quad x\in\Omega_\varepsilon,
\end{equation}
where
$$
\overline{u}_{k - p\alpha} (x; \, \varepsilon, \, \gamma)
 := \sum\limits_{i=1}^3 \chi_{\ell_0}^{(i)} \left(\frac{x_i}{\varepsilon^\gamma}\right)
     w_{k - p\alpha}^{(i)} (x_i),
\qquad
\overline{N}_{k - p\alpha} (x; \, \varepsilon, \, \gamma)  := \left(1 - \sum\limits_{i=1}^3 \chi_{\ell_0}^{(i)} \left(\frac{x_i}{\varepsilon^\gamma}\right) \right)
    N_{k - p\alpha} \left( \frac{x}{\varepsilon} \right),
$$
$\gamma$ is a fixed number from the interval $(\frac23, 1).$
Denote by
\begin{equation}\label{aaN+alpha}
\mathfrak{U}^{(M)}(x;\varepsilon)
 :=  \sum\limits_{k=0}^{M} \sum\limits_{\tiny{
  \begin{array}{c}
 p    \\
 0\le k - p\alpha \le M
  \end{array}
}}  \varepsilon^{k - p\alpha}
    \Big(
    \overline{u  }_{k - p\alpha} (x; \, \varepsilon, \, \gamma)
    + \overline{N  }_{k - p\alpha} (x; \, \varepsilon, \, \gamma)
    \Big),
\quad x\in\Omega_\varepsilon,
\end{equation}
 the partial sum of $(\ref{asymp_expansion+alpha}),$ where $M \in \Bbb N$ (see Definition~\ref{def_4*1}).
 Obviously, $ \mathfrak{U}^{(M)}  \in \mathcal{H}_\varepsilon.$

Substituting $\mathfrak{U}^{(M)}$ and $\mathcal{L}^{(M)}$  into the problem~(\ref{1.1}) in place of $u^\varepsilon$ and $\lambda(\varepsilon),$ with the help of \eqref{calcul1}  we find that
\begin{itemize}
  \item
  for each $i\in \{1, 2, 3\}$ thanks to \eqref{omega_probl_alfa_k}
\begin{equation}\label{res_cylinder}
  - \Delta_x \mathfrak{U}^{(M)} -  \mathcal{L}^{(K)} \mathfrak{U}^{(M)} = \overline{o}(\varepsilon^M)\, \mathfrak{R}^{(i)}_\varepsilon(x_i) \quad \text{in} \ \
  \Omega_{\varepsilon,\gamma}^{(i)},
\end{equation}
  where  $\Omega_{\varepsilon,\gamma}^{(i)} :=  \Omega_\varepsilon^{(i)} \cap \big\{ x\in \Bbb{R}^3 : \
 x_i\in I_{\varepsilon, \gamma}^{(i)}:= (3\ell_0\varepsilon^\gamma, \ell_i) \big\};$
   \item
thanks to \eqref{N_probl_k+alfa}
\begin{equation}\label{res_node}
  - \Delta_x \mathfrak{U}^{(M)} -  \rho_\varepsilon\, \mathcal{L}^{(M)} \mathfrak{U}^{(M)} =
  \left\{
    \begin{array}{ll}
     \overline{o}(\varepsilon^{M-2})\, \mathfrak{R}^{(0,i)}_\varepsilon(\frac{x}{\varepsilon}) & \hbox{in} \
     \Omega_\varepsilon^{(i)} \cap \big\{ x: \,  x_i\in  (\ell_0\varepsilon, 2\ell_0\varepsilon^\gamma) \big\}, \ \ i\in \{1, 2, 3\},
     \\[2mm]
     \overline{o}(\varepsilon^{M-2-\alpha})\,  \mathfrak{R}^{(0)}_\varepsilon(\frac{x}{\varepsilon}) & \hbox{in} \ \Omega^{(0)}_{\varepsilon};
    \end{array}
  \right.
\end{equation}
  \item
  and for each $i\in \{1, 2, 3\}$  thanks to \eqref{omega_probl_alfa_k} and the first  relation in \eqref{N_probl_k+alfa}
  \begin{multline}\label{res_cylinder_layer}
  - \Delta_x \mathfrak{U}^{(M)} -  \mathcal{L}^{(M)} \mathfrak{U}^{(M)}
    = \overline{o}(\varepsilon^M)\, \mathfrak{R}^{(i)}_\varepsilon(x_i) + \overline{o}(\varepsilon^{M-2})\, \mathfrak{R}^{(0,i)}_\varepsilon \big(\frac{x}{\varepsilon}\big)
 \\
  + \,  \mathfrak{G}_\varepsilon^{(i)}\big(x_i,\frac{x}{\varepsilon}\big) \quad \text{in} \ \
   \Omega_\varepsilon^{(i)} \cap \big\{ x: \  x_i\in  (2\ell_0\varepsilon^\gamma, 3\ell_0\varepsilon^\gamma) \big\}.
\end{multline}
\end{itemize}
In \eqref{res_cylinder} --  \eqref{res_cylinder_layer},  the residuals  $\overline{o}(\varepsilon^M)\, \mathfrak{R}^{(i)}_\varepsilon,$
$\overline{o}(\varepsilon^{M-2})\, \mathfrak{R}^{(0,i)}_\varepsilon,$ $\overline{o}(\varepsilon^{M-2-\alpha})\,  \mathfrak{R}^{(0)}_\varepsilon$ consist of such sums
$$
\varepsilon^{k - p \alpha}\sum_{j=0}^{k} \sum\limits_{m=0}^{p} \mu_{j - m\alpha}  \,  w_{k -j - (p-m)\alpha} ^{(i)} (x_i),
$$
where $ \ k\in\{M+1,\ldots,2M\}, \ M < k - p \alpha,$
$$
\varepsilon^{k -2 - p \alpha} \sum\limits_{j=0}^{k-2} \sum\limits_{m=0}^{p}
\mu_{j - m\alpha}  \,  N_{k-2 -j - (p-m)\alpha}(\tfrac{x}{\varepsilon}\big) ,
$$
where $ \ k\in\{M+1,\ldots,2M+2\}, \ M  < k - p \alpha,$
$$
\varepsilon^{k -2 - (p-1) \alpha} \varrho_0\big(\tfrac{x}{\varepsilon}\big)\sum\limits_{j=0}^{k-2}
 \sum\limits_{m=0}^{p-1}
\mu_{j - m\alpha }  \,  N_{k-2 -j - (p-1-m)\alpha}(\tfrac{x}{\varepsilon}\big),
$$
where $ \ k\in\{M+1,\ldots,2M+2\}, \ M -\alpha < k - (p-1)\alpha,$
respectively; and
\begin{multline*}
\mathfrak{G}_\varepsilon^{(i)}\big(x_i,\frac{x}{\varepsilon}\big) =  \sum\limits_{k=0}^{M} \sum\limits_{\tiny{
  \begin{array}{c}
 p    \\
 0\le k - p\alpha \le M
  \end{array}
}}  \varepsilon^{k - p\alpha -\gamma} \Bigg(
\frac{\partial}{\partial x_i} \left( \Big(\chi_{\ell_0}^{(i)}\Big)'\Big(\frac{x_i}{\varepsilon^\gamma}\Big)
       \,  \Big(  w_{k - p\alpha}^{(i)} (x_i) - N_{k - p\alpha} \Big( \frac{x}{\varepsilon} \Big) \Big) \right)
  \\
    +  \Big(\chi_{\ell_0}^{(i)}\Big)'\Big(\frac{x_i}{\varepsilon^\gamma}\Big)
   \, \frac{\partial}{\partial x_i} \left(w_{k - p\alpha}^{(i)} (x_i) - N_{k - p\alpha} \Big( \frac{x}{\varepsilon} \Big)  \right) \Bigg).
\end{multline*}
With the help of the the Taylor formula with the integral remainder term  for functions $\{w_{k - p\alpha}^{(i)}\}$ at the point $x_i=0$
and taking into account the fourth relation in \eqref{N_probl_k+alfa}, Remark~\ref{rem_exp-decrease} and the
fact that the support of  $\mathfrak{G}_\varepsilon^{(i)}$ belongs to    $ \Omega_\varepsilon^{(i)} \cap \big\{ x: \  x_i\in  [2\ell_0\varepsilon^\gamma, 3\ell_0\varepsilon^\gamma] \big\} $ $(i\in \{1, 2, 3\}),$ we derive
\begin{equation}\label{reminder1}
  \left| \int_{\Omega_\varepsilon^{(i)}}  \mathfrak{G}_\varepsilon^{(i)} \, \psi \, dx \right|
  \le \overline{o}\left( \varepsilon^{\gamma M - \frac{\gamma}{2} +1}\right) \|\psi\|_\varepsilon \quad \forall \psi \in \mathcal{H}_\varepsilon
\end{equation}

Using \eqref{res_cylinder} -- \eqref{reminder1}, with the help of
\eqref{est3},  \eqref{est4},  \eqref{ineq1}, \eqref{inner_asympt}   and Remark~\ref{rem_exp-decrease} we can estimate the right-hand side in the integral identity
\begin{equation}\label{just+alfa}
  \int_{\Omega_\varepsilon}\left( \nabla \mathfrak{U}^{(M)} \cdot \nabla \psi  -   \rho_\varepsilon(x)\, \mathcal{L}^{(M)}(\varepsilon)\,
  \mathfrak{U}^{(M)}\, \psi \right) dx = \mathfrak{F}_\varepsilon (\psi) \quad \forall \psi \in \mathcal{H}_\varepsilon,
\end{equation}
namely
\begin{align}\label{just+alfa+1}
|\mathfrak{F}_\varepsilon (\psi)| & \le \left( \overline{o}\left(\varepsilon^{M+1}\right)  + \overline{o}\left(\varepsilon^{\gamma M}\right)
+ \overline{o}\left(\varepsilon^{M -\alpha}\right) + \overline{o}\left( \varepsilon^{\gamma M - \frac{\gamma}{2} +1}\right) \right) \|\psi\|_\varepsilon \notag
 \\[2mm]
    & =
\left( \overline{o}\left(\varepsilon^{M -\alpha}\right) + \overline{o}\left( \varepsilon^{\gamma M - \frac{\gamma}{2} +1}\right) \right) \|\psi\|_\varepsilon \quad \text{as} \ \ \varepsilon \to 0.
\end{align}

Similar as in Subsection~\ref{Sec_justification}, with the help of the operator $A_\varepsilon$ (see \eqref{operator1}) we get
\begin{equation}\label{just4+alfa}
\left\| A_\varepsilon\left( \frac{\mathfrak{U}^{(M)}}{\|\mathfrak{U}^{(M)}\|_\varepsilon}\right)  - \left(\mathcal{L}^{(M)}(\varepsilon)\right)^{-1} \, \frac{\mathfrak{U}^{(M)}}{\|\mathfrak{U}^{(M)}\|_\varepsilon}  \right\|_\varepsilon
 \le \left( \overline{o}\left(\varepsilon^{M -\alpha-1}\right) + \overline{o}\left( \varepsilon^{\gamma M - \frac{\gamma}{2} }\right) \right)  .
\end{equation}
Due to Lemma~\ref{Vishik} the inequality \eqref{just4+alfa} means that the following statement holds.

\begin{proposition}\label{Prop_4_2}
For each  $n\in \Bbb N$ in any neighbourhood of the eigenvalue $\Lambda_n$ of the limit spectral problem~\eqref{limitSpeProb}  there is an eigenvalue of the problem \eqref{1.1} for $\varepsilon$ small enough.
\end{proposition}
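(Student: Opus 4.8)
The plan is to obtain the assertion as a direct application of the abstract perturbation lemma (Lemma~\ref{Vishik}) to the operator estimate \eqref{just4+alfa} already established for the normalized approximation. Concretely, I would take the Hilbert space $H=\mathcal{H}_\varepsilon$, the self-adjoint positive compact operator $A=A_\varepsilon$ from \eqref{operator1}, the unit vector $u:=\mathfrak{U}^{(M)}/\|\mathfrak{U}^{(M)}\|_\varepsilon$, and the trial number $\mu:=\big(\mathcal{L}^{(M)}(\varepsilon)\big)^{-1}$. With this identification, \eqref{just4+alfa} is precisely the hypothesis $\|A_\varepsilon u-\mu u\|_\varepsilon\le\beta$ of Lemma~\ref{Vishik}, with $\beta=\overline{o}(\varepsilon^{M-\alpha-1})+\overline{o}(\varepsilon^{\gamma M-\frac{\gamma}{2}})$.

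First I would verify the two auxiliary bounds that make this substitution legitimate, in complete analogy with the case $\alpha=0$ (cf. \eqref{additional_est1} and \eqref{additional_est2}): that $0<c_1\le\mathcal{L}^{(M)}(\varepsilon)\le c_2$ for small $\varepsilon$, so that $\mu$ is well defined, positive and bounded; and that $0<c_3\varepsilon\le\|\mathfrak{U}^{(M)}\|_\varepsilon\le c_4\varepsilon$, so that $\mathfrak{U}^{(M)}\neq0$ and the normalization is meaningful. The lower bound here is the only genuinely substantive point: it holds because the leading regular term $\{w_0^{(i)}\}=\{W_n^{(i)}\}$ already contributes an energy of order $\varepsilon^2$ on the thin cylinders (whose cross-sections have area $\sim\varepsilon^2$), while all higher-order regular corrections and the inner part produce strictly smaller contributions.

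Next I would choose $M$ so that $\beta\to0$. Since $\alpha\in(0,1)$ and $\gamma\in(\tfrac23,1)$, any $M\ge2$ makes both exponents $M-\alpha-1$ and $\gamma M-\frac{\gamma}{2}$ positive, hence $\beta\to0$ as $\varepsilon\to0$. Lemma~\ref{Vishik} then produces an eigenvalue of $A_\varepsilon$ within distance $\beta$ of $\mu$; because the spectrum of $A_\varepsilon$ is precisely $\{\lambda_k^{-1}(\varepsilon)\}_{k\in\Bbb N}$, there is an index $k$ with $\big|\lambda_k^{-1}(\varepsilon)-\big(\mathcal{L}^{(M)}(\varepsilon)\big)^{-1}\big|\le\beta$.

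Finally I would pass to the limit. The leading summand of $\mathcal{L}^{(M)}(\varepsilon)$ is $\mu_0=\Lambda_n$ and every other summand carries a positive power $k-p\alpha>0$ of $\varepsilon$, so $\mathcal{L}^{(M)}(\varepsilon)\to\Lambda_n$ and $\mu\to\Lambda_n^{-1}$; combined with the previous estimate this forces $\lambda_k^{-1}(\varepsilon)\to\Lambda_n^{-1}$, i.e. the eigenvalue $\lambda_k(\varepsilon)$ of \eqref{1.1} converges to $\Lambda_n$. Consequently, for any prescribed neighbourhood of $\Lambda_n$ there is $\varepsilon_0>0$ such that $\lambda_k(\varepsilon)$ lies inside it for all $\varepsilon\in(0,\varepsilon_0)$, which is the claim. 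The main obstacle, as noted, is not the abstract step but the verification of the two-sided norm bound $\|\mathfrak{U}^{(M)}\|_\varepsilon\asymp\varepsilon$; everything else is a mechanical reading-off of \eqref{just4+alfa} through Lemma~\ref{Vishik}.
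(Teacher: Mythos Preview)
Your proposal is correct and follows exactly the paper's approach: the paper's entire proof of Proposition~\ref{Prop_4_2} is the single sentence ``Due to Lemma~\ref{Vishik} the inequality \eqref{just4+alfa} means that the following statement holds,'' and you have simply unpacked what that sentence means, including the auxiliary bounds on $\mathcal{L}^{(M)}(\varepsilon)$ and $\|\mathfrak{U}^{(M)}\|_\varepsilon$ that the paper borrows implicitly from the analogous verification \eqref{additional_est1}--\eqref{additional_est2} in the case $\alpha=0$. One minor cosmetic point: the index $k$ furnished by Lemma~\ref{Vishik} may in principle vary with $\varepsilon$, so it is cleaner to say ``for each small $\varepsilon$ there is an eigenvalue $\lambda(\varepsilon)$ with $|\lambda^{-1}(\varepsilon)-\Lambda_n^{-1}|\to0$'' rather than ``$\lambda_k(\varepsilon)\to\Lambda_n$,'' but this does not affect the conclusion of the proposition.
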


Next to identify  an eigenvalue that lies in a neighbourhood of $\Lambda_n,$ we need the following fact.

\begin{lemma}[$\alpha <1$]\label{Lemma_main_Convergence} For each $n\in \Bbb N$
\begin{equation}\label{convergence}
  \lambda_n(\varepsilon) \rightarrow \Lambda_n \quad \text{as}  \ \ \varepsilon \to 0.
 \end{equation}
 \end{lemma}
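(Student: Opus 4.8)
The plan is to combine the a priori bounds of Proposition~\ref{prop1} with a compactness argument for averaged eigenfunctions, and to fix the indexing by means of a minimax upper bound together with the simplicity of the limit spectrum \eqref{EV_lim_prob}. The decisive structural fact is that for $\alpha<1$ the concentrated mass is negligible at the natural $L^2$-scale: by \eqref{est3} and the normalization \eqref{normalized} one has $\int_{\Omega_\varepsilon^{(0)}}(u_k^\varepsilon)^2\,dx\le C_3\varepsilon\int_{\Omega_\varepsilon}|\nabla u_k^\varepsilon|^2\,dx=C_3\varepsilon^3$, whence $\varepsilon^{-\alpha}\int_{\Omega_\varepsilon^{(0)}}\varrho_0(\tfrac{x}{\varepsilon})(u_k^\varepsilon)^2\,dx=\mathcal{O}(\varepsilon^{3-\alpha})=\overline{o}(\varepsilon^2)$ because $3-\alpha>2$. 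Thus the node contribution disappears in the limit and the effective limit problem is exactly \eqref{limitSpeProb}.

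First I would fix $n$ and take the first $n$ eigenfunctions $u_1^\varepsilon,\dots,u_n^\varepsilon$, normalized by \eqref{normalized} (so $\int_{\Omega_\varepsilon}|\nabla u_k^\varepsilon|^2\,dx=\varepsilon^2$) and $\mathcal V_\varepsilon$-orthogonal, and pass to the graph through the averaging operators $E^{(i)}_\varepsilon$ of Corollary~\ref{corollary2}, setting $v_k^{(i)}:=E^{(i)}_\varepsilon(u_k^\varepsilon)$. Using \eqref{est1}, \eqref{est4} and \eqref{ineq1} one checks that the $v_k^{(i)}$ are bounded in $H^1(I_i)$ uniformly in $\varepsilon$; hence, along a subsequence, $v_k^{(i)}\rightharpoonup w_k^{(i)}$ weakly in $H^1(I_i)$ and strongly in $C(\overline{I_i})$ and in $L^2(I_i)$, so each limit $\widetilde w_k=\{w_k^{(i)}\}$ is continuous at the vertex and belongs to $\mathcal H_0$. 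A further diagonal extraction gives $\lambda_k(\varepsilon)\to\lambda_k^\ast\le C_n$ for $k=1,\dots,n$, and since the $\lambda_k(\varepsilon)$ are ordered, $\lambda_1^\ast\le\dots\le\lambda_n^\ast$.

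Next I would pass to the limit in the identity \eqref{identity} written for $u_k^\varepsilon$ and tested against $\phi$, the extension into $\Omega_\varepsilon$ of an arbitrary $\widetilde\psi\in\mathcal H_0$ (constant on the transverse disks, equal to $\widetilde\psi(0)$ on the node). Dividing by $\pi\varepsilon^2$ and letting $\varepsilon\to0$, the cylinder terms reproduce $\langle\widetilde w_k,\widetilde\psi\rangle_0$ and $\lambda_k^\ast(\widetilde w_k,\widetilde\psi)_{\mathcal V_0}$, while the node terms drop out by the negligibility above; this yields the limit identity \eqref{limit_identity} with $\mu=\lambda_k^\ast$, i.e. $\widetilde w_k$ solves \eqref{limitSpeProb}. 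Moreover, from \eqref{identity} with $v=u_m^\varepsilon$ one gets $(u_k^\varepsilon,u_m^\varepsilon)_{\mathcal V_\varepsilon}=\varepsilon^2\delta_{km}/\lambda_k(\varepsilon)$, and since the transverse oscillation is controlled by $\int_{\Omega_\varepsilon^{(i)}}(u_k^\varepsilon-E^{(i)}_\varepsilon u_k^\varepsilon)^2\,dx\le C\varepsilon^2\int_{\Omega_\varepsilon}|\nabla u_k^\varepsilon|^2\,dx=\mathcal{O}(\varepsilon^4)$, this relation passes to the limit as $(\widetilde w_k,\widetilde w_m)_{\mathcal V_0}=\delta_{km}/(\pi\lambda_k^\ast)$. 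In particular every $\widetilde w_k$ is nonzero, and $\widetilde w_1,\dots,\widetilde w_n$ are $n$ linearly independent eigenfunctions of \eqref{limitSpeProb} with eigenvalues $\lambda_1^\ast,\dots,\lambda_n^\ast$.

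Finally I would match indices. The bound $\limsup_{\varepsilon\to0}\lambda_n(\varepsilon)\le\Lambda_n$ follows from the minimax principle applied to the $n$-dimensional subspace of $\mathcal H_\varepsilon$ spanned by the extensions of the limit eigenfunctions $\widetilde W_1,\dots,\widetilde W_n$, whose Rayleigh quotients tend to $\Lambda_1,\dots,\Lambda_n$ (again the node terms are $\overline{o}(\varepsilon^2)$). For the reverse inequality, suppose $\lambda_n^\ast<\Lambda_n$; then $\widetilde w_1,\dots,\widetilde w_n$ would all have eigenvalues $\le\lambda_n^\ast<\Lambda_n$ and would therefore lie in the span of the eigenspaces of $\Lambda_1,\dots,\Lambda_{n-1}$, which has dimension $n-1$ because all eigenvalues in \eqref{EV_lim_prob} are simple — contradicting their linear independence. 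Hence $\lambda_n^\ast=\Lambda_n$, and since every subsequence yields the same limit, $\lambda_n(\varepsilon)\to\Lambda_n$. I expect the main obstacle to be precisely the compactness step: securing the uniform $H^1$-bounds for the averaged functions and verifying quantitatively that both the transverse oscillation and the node contributions are genuinely of lower order, so that the limits $\widetilde w_k$ are nontrivial and the orthonormality survives the passage to the limit.
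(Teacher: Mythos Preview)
Your argument is correct and shares with the paper's proof the entire compactness backbone: the key negligibility estimate $\varepsilon^{-\alpha}\int_{\Omega_\varepsilon^{(0)}}\varrho_0(u_k^\varepsilon)^2\,dx=\mathcal{O}(\varepsilon^{3-\alpha})=\overline{o}(\varepsilon^2)$, the passage to the graph via the averaging operators $E^{(i)}_\varepsilon$, the uniform $H^1(I_i)$-bounds and vertex continuity of the limits, and the survival of the $\mathcal{V}_0$-orthogonality relation $(\widetilde w_k,\widetilde w_m)_{\mathcal V_0}=\delta_{km}/(\pi\lambda_k^\ast)$ ensuring nontriviality and linear independence. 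The genuine difference lies in the \emph{identification step}. The paper does not prove a minimax upper bound $\limsup\lambda_n(\varepsilon)\le\Lambda_n$; instead it invokes Proposition~\ref{Prop_4_2}, which is obtained from the constructed asymptotic approximation $\mathfrak{U}^{(M)}$ via Vishik's Lemma~\ref{Vishik}, to guarantee that near every $\Lambda_n$ there is some $\lambda_k(\varepsilon)$; combined with the simplicity of the limit spectrum and the orthogonality of the limits this pins down $\Lambda_n^\ast=\Lambda_n$. Your route (minimax upper bound with extended limit eigenfunctions, then a dimension count against the $(n-1)$-dimensional span of eigenspaces below $\Lambda_n$) is self-contained and does not rely on the asymptotic machinery, which is a conceptual advantage for a convergence lemma; the paper's route is shorter in context because the approximation \eqref{just4+alfa} is already in hand. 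One small technical point you should make explicit: your extensions of $\widetilde W_1,\dots,\widetilde W_n$ into $\mathcal{H}_\varepsilon$ have an $\mathcal{O}(\varepsilon)$ jump at $\Upsilon_\varepsilon^{(i)}(\varepsilon\ell_0)$ and must be repaired by a short linear interpolation on $[\varepsilon\ell_0,2\varepsilon\ell_0]$, exactly as the paper does for its test functions $\Phi_\varepsilon$; this contributes only $\mathcal{O}(\varepsilon^3)$ to the numerator and does not affect the Rayleigh-quotient limit.
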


\begin{proof} {\bf 1.} In this item the index $n$ is omitted.
Thanks to \eqref{t0.1} and \eqref{lower-est},  we can regard that
$\lambda(\varepsilon) \rightarrow  \Lambda \neq 0$ as $\varepsilon \to 0$ (select a subsequence if necessary).

With the help of the Cauchy--Bunyakovsky--Schwarz inequality and \eqref{normalized} it is easy to show  that the sequence
$\big\{E^{(i)}_\varepsilon u^\varepsilon\big\}_{\varepsilon>0}$ is bounded in $H^1(I_\varepsilon^{(i)}),$
where $ \ I_\varepsilon^{(i)} =\{x: \ x_i\in  (\varepsilon \ell_0, \ell_i), \ \overline{x_i}=(0,0)\}$ and
  $$
\big(E^{(i)}_\varepsilon u^\varepsilon\big)(x_i)
 = \frac{1}{\pi \varepsilon^2\, h_i^2}
   \int_{\Upsilon^{(i)}_\varepsilon(0)}
  u^\varepsilon(x)\, d\overline{x}_i,
\quad i=1,2,3.
$$
Extending with the constant $\big(E^{(i)}_\varepsilon u^\varepsilon\big)(\varepsilon \ell_0)$ the function
$E^{(i)}_\varepsilon u^\varepsilon$ on the segment $[0,  \varepsilon \ell_0],$ we get
\begin{equation}\label{EF_est_2}
  \left\| E^{(i)}_\varepsilon u^\varepsilon \right\|_{H^1(I_i)} \le C_2
  \end{equation}
since $\big|\big(E^{(i)}_\varepsilon u^\varepsilon\big)(\varepsilon \ell_0)\big| \le c_1 \big\|E^{(i)}_\varepsilon u^\varepsilon\big\|_{H^1(I^{(i)}_\varepsilon)} \le c_2.$ In addition, it is easy to show that
\begin{equation}\label{EF_est_3}
  \left| \big(E^{(i)}_\varepsilon u^\varepsilon\big)(\varepsilon \ell_0) - \big(E^{(j)}_\varepsilon u^\varepsilon\big)(\varepsilon \ell_0) \right| \le C_3 \, \varepsilon^{\frac{1}{2}}, \quad i\neq j \in \{1,2,3\}.
  \end{equation}
Owing to \eqref{EF_est_2} and \eqref{EF_est_3},  there exist a subsequence of $\{\varepsilon\}$
(again denoted by $\{\varepsilon\})$ and a function $\widetilde{v}=\{v^{(i)}\}_{i=1}^3 \in \mathcal{H}_0$
such that for each $i\in \{1, 2, 3\}$
\begin{equation}\label{weak_limit}
  E^{(i)}_\varepsilon u^\varepsilon \rightarrow v^{(i)} \quad \text{uniformly on} \ \ I_i \quad \text{and weakly in} \ \ H^1(I_i) \quad \text{as} \ \ \varepsilon \to 0.
\end{equation}

Now we re-write the integral identity \eqref{identity} for the problem \eqref{1.1} in the form
\begin{equation}\label{identity+graph}
\sum_{i=1}^{3} {\pi h_i^2} \int_{I_\varepsilon^{(i)}} \frac{d \big(E^{(i)}_\varepsilon u^\varepsilon\big)}{dx_i} \, \frac{d \Phi_\varepsilon}{dx_i} \, dx_i =
\lambda(\varepsilon) \left(  \sum_{i=1}^{3} {\pi h_i^2}\int_{I^{(i)}_\varepsilon} E^{(i)}_\varepsilon u^\varepsilon \, \Phi_\varepsilon \, dx_i + c_0 \varepsilon^{-\alpha -2}  \int_{\Omega^{(0)}_\varepsilon}  \varrho_0\big(\tfrac{x}{\varepsilon}\big)\, u^\varepsilon  \, dx\right).
\end{equation}
Here a test-function $\Phi_\varepsilon$ is taken  as follows :
$$
\Phi_\varepsilon(x)= \left\{
        \begin{array}{ll}
          c_0=\widetilde{\phi}(0), & x \in \ \Omega_\varepsilon^{(0)},
\\
\phi_i(0) + \big(\phi_i(\varepsilon 2 \ell_0) - \phi_i(0)\big) \big( \frac{x_i}{\varepsilon \ell_0} -1 \big) , & x \in \ \Omega_\varepsilon^{(i)} \cap \{x: \ x_i\in  [\varepsilon \ell_0, \varepsilon 2 \ell_0], \ \overline{x_i}=(0,0)\},
\\
          \phi_i(x_i), & x \in \ \Omega_\varepsilon^{(i)} \cap \{x: \ x_i\in  (\varepsilon 2 \ell_0, \ell_i), \ \overline{x_i}=(0,0)\}, \ \ i\in\{1, 2, 3\},
        \end{array}
      \right.
$$
where $\widetilde{\phi}=\{\phi_i\}_{i=1}^3$ is arbitrary smooth function from the space $\mathcal{ H}_0$ (see\eqref{space_H_0}).
Clearly, that $\Phi_\varepsilon \in \mathcal{H}_\varepsilon$ and $\Phi_\varepsilon \in \mathcal{H}_0,$ and in addition,
\begin{equation}\label{test_limit}
\Phi_\varepsilon \rightarrow \widetilde{\phi} \quad \text{in} \ \mathcal{ H}_0 \ \ \text{as} \ \ \varepsilon \to 0.
\end{equation}

In virtue of \eqref{normalized} and \eqref{est3},
  \begin{equation}\label{EF_est_1}
  \int_{\Omega^{(0)}_\varepsilon}   \big(u^\varepsilon\big)^2  \, dx \le C_1 \varepsilon^3.
  \end{equation}

Taking \eqref{weak_limit}, \eqref{test_limit} and  \eqref{EF_est_1}  into account, we can pass to the limit (as $\varepsilon \to 0)$ in \eqref{identity+graph} and get
$$
\sum_{i=1}^{3} {\pi h_i^2} \int_{I_i} \frac{d v^{(i)}}{dx_i} \, \frac{d \phi^{(i)}}{dx_i} \, dx_i =
\Lambda  \sum_{i=1}^{3} {\pi h_i^2} \int_{I_i} v^{(i)} \, \phi^{(i)} \, dx_i .
$$
This identity means  that $\widetilde{v}=\{v^{(i)}\}_{i=1}^3$  is an eigenfunction corresponding the eigenvalue $\Lambda$ if we show that the function $\widetilde{v} \neq \{0\}$ (see Definition~\ref{def3_1}).

From \eqref{normalized} and \eqref{identity} it follows that the eigenfunction $u^\varepsilon$ corresponding to the eigenvalue
$\lambda(\varepsilon)$ of the problem \eqref{1.1}  satisfies the equality
  \begin{equation}\label{norm+}
 \frac{1}{\varepsilon^2}   \sum_{i=1}^{3} \int_{\Omega^{(i)}_\varepsilon}  \big(u^\varepsilon\big)^2 \, dx + \varepsilon^{-\alpha -2}  \int_{\Omega^{(0)}_\varepsilon}  \varrho_0\big(\tfrac{x}{\varepsilon}\big)\, \big(u^\varepsilon\big)^2  \, dx = \frac{1}{\lambda(\varepsilon)}.
  \end{equation}

Using  \eqref{EF_est_1} and the inequality
$$
\left| \int_{I_\varepsilon^{(i)}} \Big( \big(E^{(i)}_\varepsilon u^\varepsilon\big)^2 - E^{(i)}_\varepsilon\big( (u^\varepsilon)^2 \big) \Big)  \, dx \right|\le C_3 \|u^\varepsilon\|^2_\varepsilon \le C_3 \varepsilon^2
$$
(it  was proved in a general form in \cite[Lemma 2.2]{Mel_Pop_2012}), we can pass to the limit in \eqref{norm+} and find that
  \begin{equation}\label{norm_limit}
 \sum_{i=1}^{3} \pi h_i^2 \int_{I_i}  \big(v^{(i)}\big)^2 \, dx_i  = \frac{1}{\Lambda} \neq 0.
  \end{equation}

{\bf 2.} Making use the diagonal process, we can extract a subsequence from
$\{\varepsilon\}$ (again denoted by $\{\varepsilon\})$ for which the following statements hold:
 \begin{itemize}
   \item for any $n\in \Bbb N$:  \  $ \lambda_n(\varepsilon) \rightarrow  \Lambda_n^*$  \ and \  $\Lambda_n^* \le \Lambda_{n+1}^*;$
\item for any $n\in \Bbb N$:  \
$E^{(i)}_\varepsilon u^\varepsilon_n \rightarrow v^{(i)}_n$
uniformly on $I_i$ \ and weakly in  $H^1(I_i)$ \ as  \ $\varepsilon \to 0$;
   \item $\widetilde{v}_n=\{v^{(i)}_n\}_{i=1}^3$ is an eigenfunction corresponding to the eigenvalue $\Lambda_n^*$ of the limit problem \eqref{limitSpeProb}.
 \end{itemize}

From \eqref{normalized} and \eqref{identity} it follows that for $n\neq m$
  \begin{equation}\label{orthog+}
 \frac{1}{\varepsilon^2}   \sum_{i=1}^{3} \int_{\Omega^{(i)}_\varepsilon}  u^\varepsilon_n \, u^\varepsilon_m \, dx + \varepsilon^{-\alpha -2}  \int_{\Omega^{(0)}_\varepsilon}  \varrho_0\big(\tfrac{x}{\varepsilon}\big)\, u^\varepsilon_n \, u^\varepsilon_m \, dx = 0.
  \end{equation}
With the help of \eqref{EF_est_1} and \eqref{weak_limit} we find the limit of the equality  \eqref{orthog+} as $\varepsilon \to 0,$ namely
 \begin{equation}\label{limit_orthog+}
  \sum_{i=1}^{3} h_i^2 \int_{I_i}  v^{(i)}_n \, v^{(i)}_m \, dx_i  = 0 \quad \text{for any} \ \ n\neq m .
  \end{equation}
In addition, because of \eqref{normalized+}  we can regard that $\left( v^{(i)}_n(\ell_1) \right)' > 0.$

Since the spectrum of the limit spectral problem  is simple, the equalities \eqref{limit_orthog+} and Proposition~\ref{Prop_4_2} mean
that for any $n\in \Bbb N$ \ $\Lambda_n^* = \Lambda_{n}, \ n\in \Bbb N.$

To complete the proof, it suffices to observe that similar arguments and results are valid for any subsequence of the sequence $\{\varepsilon\}$ chosen  in the  beginning of the proof. \end{proof}

Taking into account that the eigenfunctions $\{u^\varepsilon_n\}_{n\in \Bbb N}$ satisfy \eqref{normalized} and \eqref{normalized+}
and the eigenfunctions $ \{\widetilde{W}_n \}_{n \in \Bbb N}$ of the limit spectral problem \eqref{limitSpeProb} are
selected uniquely so that the conditions \eqref{EF_lim_prob} and \eqref{EF_addion_lim_prob} hold,  from the proof of Lemma~\ref{Lemma_main_Convergence} it follows the following statement.

\begin{corollary}
For any $n\in \Bbb N$ and $i\in \{1, 2, 3\}$
$$
E^{(i)}_\varepsilon u^\varepsilon_n \rightarrow \tfrac{1}{\sqrt{\pi}}\, W^{(i)}_n \quad \text{uniformly on} \ I_i \ \text{and weakly in}\  H^1(I_i)
\quad \text{as}  \ \varepsilon \to 0,
$$
where $\widetilde{W}_n=\{W^{(i)}_n\}_{i=1}^3$ is the eigenfunction corresponding to the eigenvalue $\Lambda_n$ of  the limit spectral problem~\eqref{limitSpeProb}, which satisfies \eqref{EF_lim_prob} and \eqref{EF_addion_lim_prob}.
\end{corollary}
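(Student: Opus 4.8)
The plan is to upgrade the subsequential convergence already established inside the proof of Lemma~\ref{Lemma_main_Convergence} into a precise identification of the limit profile, and then to remove the passage to a subsequence. First I would recall that the second part of that proof produces, for each fixed $n$, a function $\widetilde{v}_n=\{v^{(i)}_n\}_{i=1}^3\in\mathcal{H}_0$ such that $E^{(i)}_\varepsilon u^\varepsilon_n\to v^{(i)}_n$ uniformly on $I_i$ and weakly in $H^1(I_i)$ (see \eqref{weak_limit}), and that $\widetilde{v}_n$ is an eigenfunction of the limit problem \eqref{limitSpeProb} corresponding to $\Lambda_n$. Since every eigenvalue of \eqref{limitSpeProb} is simple, its eigenspace is one-dimensional, so necessarily $\widetilde{v}_n=c_n\,\widetilde{W}_n$ for some scalar $c_n$, and the whole task reduces to computing $c_n$.

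To determine $|c_n|$, I would compare two normalizations taken in the space $\mathcal{V}_0$. On the one hand, choosing $\widetilde{\phi}=\widetilde{W}_n$ and $\mu=\Lambda_n$ in the identity \eqref{limit_identity} and using $\langle\widetilde{W}_n,\widetilde{W}_n\rangle_0=1$ from \eqref{EF_lim_prob} gives $(\widetilde{W}_n,\widetilde{W}_n)_{\mathcal{V}_0}=\Lambda_n^{-1}$. On the other hand, the relation \eqref{norm_limit}, obtained by passing to the limit in the normalization identity \eqref{norm+}, states that $\sum_{i=1}^3\pi h_i^2\int_{I_i}(v^{(i)}_n)^2\,dx_i=\Lambda_n^{-1}$, that is $(\widetilde{v}_n,\widetilde{v}_n)_{\mathcal{V}_0}=(\pi\Lambda_n)^{-1}$. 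Substituting $\widetilde{v}_n=c_n\widetilde{W}_n$ into the last equality yields $c_n^2\,\Lambda_n^{-1}=(\pi\Lambda_n)^{-1}$, whence $c_n=\pm 1/\sqrt{\pi}$.

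The sign is then fixed by the two compatible orientation conventions. The eigenfunctions $u^\varepsilon_n$ are selected through \eqref{normalized+}, which in the limit forces $(v^{(1)}_n)'(\ell_1)\ge 0$ (as already observed in the proof of Lemma~\ref{Lemma_main_Convergence}), while $\widetilde{W}_n$ is normalized by \eqref{EF_addion_lim_prob} so that $(W^{(1)}_n)'(\ell_1)>0$. Since $(v^{(1)}_n)'(\ell_1)=c_n\,(W^{(1)}_n)'(\ell_1)$ and $c_n\neq 0$, the sign of $c_n$ must be nonnegative, so the only admissible value is $c_n=1/\sqrt{\pi}$. This gives $v^{(i)}_n=\tfrac{1}{\sqrt{\pi}}\,W^{(i)}_n$ for $i=1,2,3$, which is exactly the asserted limit.

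Finally, to promote the subsequential statement to convergence of the entire family $\{\varepsilon\}$, I would argue that the limit is uniquely determined: both the modulus and the sign of $c_n$, as well as the index identification $\Lambda_n^*=\Lambda_n$ supplied by the orthogonality relations \eqref{limit_orthog+} together with the simplicity of the spectrum, depend in no way on the extracted subsequence. Consequently every subsequence of $\{\varepsilon\}$ admits a further subsequence along which $E^{(i)}_\varepsilon u^\varepsilon_n\to\tfrac{1}{\sqrt{\pi}}W^{(i)}_n$, which forces convergence of the full family. The only delicate point is this bookkeeping of normalizations --- reconciling the $\mathcal{V}_0$-norm coming from the limit identity with the $L^2$-type normalization \eqref{norm_limit} and the two sign conventions --- since the topological convergence itself is already furnished by Lemma~\ref{Lemma_main_Convergence}.
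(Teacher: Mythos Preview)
Your proposal is correct and follows essentially the same approach as the paper, which simply states that the corollary follows from the proof of Lemma~\ref{Lemma_main_Convergence} together with the normalization conditions \eqref{normalized}, \eqref{normalized+}, \eqref{EF_lim_prob}, \eqref{EF_addion_lim_prob}. Your version is in fact more explicit: you spell out the computation of $|c_n|=1/\sqrt{\pi}$ via \eqref{norm_limit} and \eqref{limit_identity}, the sign determination via the two derivative conventions, and the subsequence-to-full-family upgrade, whereas the paper leaves all of this implicit.
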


Based on \eqref{just4+alfa},  Lemmas~\ref{Lemma_main_Convergence},~\ref{Vishik} and Proposition~\ref{prop_4_1}, similarly as in  Subsection~\ref{Sec_justification} we prove the theorem.

\begin{theorem}[$\alpha \in (0, 1) \cap (\Bbb R \setminus \Bbb Q)$ ]
  For each $n\in \Bbb N$ the $n$-th eigenvalue $\lambda_n(\varepsilon)$ of the problem \eqref{1.1}  is  decomposed into the asymptotic series
$$
\lambda_n(\varepsilon) \approx \Lambda_n +
\sum\limits_{k=1}^{+\infty} \sum\limits_{p=0}^{+\infty}  \varepsilon^{k - p\alpha } \, \mu_{k - p\alpha, n}
 \quad \text{as} \quad \varepsilon \to 0,
$$
where $\Lambda_n$ is the $n$-th eigenvalue of the limit spectral problem \eqref{limitSpeProb},
\ $\mu_{k - p\alpha, n} =0$ if $k - p\alpha < 1 - \alpha;$ in addition the asymptotic estimate \eqref{def_as_exp} holds,  in particular
\begin{equation}\label{t5+}
\lambda_n(\varepsilon) = \Lambda_n + \varepsilon^{1 - \alpha} \mu_{1 - \alpha,n} + \overline{o}(\varepsilon^{1 - \alpha})
\quad \text{as} \quad \varepsilon \to 0,
\end{equation}
where
\begin{equation}\label{mu_1-alfa_n}
\mu_{1 - \alpha,n} = -  \frac{\big(\Lambda_n \, W_{n}^{(1)}(0)\big)^2}{\pi}  \, \int_{\Xi^{(0)}} \varrho_0(\xi)  \, d\xi.
\end{equation}

For the corresponding eigenfunction $u_n^\varepsilon$ normalized by \eqref{normalized} and \eqref{normalized+} one can construct  the approximation function $\mathfrak{U}_n^{(M)} \in \mathcal{H}_\varepsilon$ defined with \eqref{aaN+alpha} such that the asymptotic estimate
\begin{equation}\label{just9+alfa}
  \left\| u_n^\varepsilon - \tau_M(\varepsilon) \,  \mathfrak{U}^{(M)}_{n} \right\|_\varepsilon =
 \overline{o}\left(\varepsilon^{M -\alpha}\right) + \overline{o}\big( \varepsilon^{\gamma M +1 - \frac{\gamma}{2} }\big)
\quad \text{as} \ \ \varepsilon \to 0,
\end{equation}
 holds  for any $M\in \Bbb N, \ M \ge 3.$ Here  $\tau_M(\varepsilon) = \frac{\varepsilon}{\|\mathfrak{U}^{(M)}_{n}\|_\varepsilon}$
and it has the asymptotics \eqref{ta-m}, $\gamma$ is a fixed number from the interval $(\frac23, 1).$
\end{theorem}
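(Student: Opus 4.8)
The plan is to repeat, almost verbatim, the justification scheme of Subsection~\ref{Sec_justification}, but now built on the operator inequality \eqref{just4+alfa} obtained in Subsection~\ref{subsec_just+alpfa} and fed by the convergence statement of Lemma~\ref{Lemma_main_Convergence} together with the structural Proposition~\ref{prop_4_1}. First I would fix $n\in\Bbb N$ and an arbitrary integer $M\ge 3$, and view $\mathfrak{U}_n^{(M)}/\|\mathfrak{U}_n^{(M)}\|_\varepsilon$ as a unit vector in $\mathcal{H}_\varepsilon$ together with the trial number $\big(\mathcal{L}_n^{(M)}(\varepsilon)\big)^{-1}$. Applying Lemma~\ref{Vishik} to the self-adjoint positive compact operator $A_\varepsilon$, with $\beta$ equal to the right-hand side of \eqref{just4+alfa}, produces an eigenvalue of $A_\varepsilon$ within distance $\beta$ of $\big(\mathcal{L}_n^{(M)}(\varepsilon)\big)^{-1}$.

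To identify this eigenvalue as $\lambda_n^{-1}(\varepsilon)$, I would set $d_0:=\tfrac12\min\{\Lambda_{n-1}^{-1}-\Lambda_n^{-1},\,\Lambda_n^{-1}-\Lambda_{n+1}^{-1}\}$, which is admissible because the limit spectrum \eqref{EV_lim_prob} is simple. Lemma~\ref{Lemma_main_Convergence} gives $\lambda_n(\varepsilon)\to\Lambda_n$, so for $\varepsilon$ small the segment $[\mu-d_0,\mu+d_0]$ isolates the simple eigenvalue $\lambda_n^{-1}(\varepsilon)$, while the normalizations \eqref{normalized}, \eqref{normalized+} fix the corresponding eigenfunction. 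The two conclusions of Lemma~\ref{Vishik} then yield $\big|\lambda_n^{-1}(\varepsilon)-(\mathcal{L}_n^{(M)}(\varepsilon))^{-1}\big|\le\beta$ and the $\mathcal{H}_\varepsilon$-proximity of $u_n^\varepsilon/\varepsilon$ to $\mathfrak{U}_n^{(M)}/\|\mathfrak{U}_n^{(M)}\|_\varepsilon$; multiplying the latter by $\varepsilon$ and invoking \eqref{ta-m} gives \eqref{just9+alfa}.

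Next I would pass from the reciprocal estimate to a direct one. Using uniform bounds $0<c_1\le\mathcal{L}_n^{(M)}(\varepsilon)\le c_2$ and the a priori bound $\lambda_n(\varepsilon)\le C_n$ of Proposition~\ref{prop1}, the inequality on reciprocals turns into
\[
\big|\lambda_n(\varepsilon)-\mathcal{L}_n^{(M)}(\varepsilon)\big|\le C\,\big(\overline{o}(\varepsilon^{M-\alpha-1})+\overline{o}(\varepsilon^{\gamma M-\gamma/2})\big).
\]
The two-term formula \eqref{t5+} then follows already for $M=3$: by Proposition~\ref{prop_4_1} the first correction sits at order $\varepsilon^{1-\alpha}$, its coefficient is the value $\mu_{1-\alpha,n}$ computed in \eqref{mu1-alfa}, and substituting $\mu_0=\Lambda_n$ and $w_0^{(1)}(0)=W_n^{(1)}(0)$ reproduces \eqref{mu_1-alfa_n}; all remaining terms of $\mathcal{L}_n^{(3)}$ are $\overline{o}(\varepsilon^{1-\alpha})$, and since $\gamma>\tfrac23$ the error displayed above is $\overline{o}(\varepsilon^{1-\alpha})$ as well.

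The main obstacle is the bookkeeping imposed by the non-standard two-parameter scale $\{\varepsilon^{k-p\alpha}\}$ of Definition~\ref{def_4*1}. The inequality \eqref{just4+alfa} controls the residual only to order $\overline{o}(\varepsilon^{M-\alpha-1})+\overline{o}(\varepsilon^{\gamma M-\gamma/2})$, while arbitrarily many fractional powers $\varepsilon^{k-p\alpha}$ pile up below any fixed level; hence a prescribed accuracy cannot be read off from a single value of $M$. Instead I would argue that a target accuracy $\varepsilon^{M_0}$ is attained by letting the iteration depth $M$ grow until both exponents $M-\alpha-1$ and $\gamma M-\gamma/2$ exceed $M_0$ — possible precisely because $\gamma>\tfrac23$ — and then verifying that the terms carried by $\mathcal{L}_n^{(M)}$ beyond $\mathcal{L}_n^{(M_0)}$ are themselves $\overline{o}(\varepsilon^{M_0})$, so that \eqref{def_as_exp} holds. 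This last point, absent from the classical single-scale justification of Subsection~\ref{Sec_justification}, is the genuinely new ingredient; with $M_0$ arbitrary it delivers the full asymptotic expansion.
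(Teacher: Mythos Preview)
Your proposal is correct and follows essentially the same approach as the paper, which simply states that the theorem is proved ``similarly as in Subsection~\ref{Sec_justification}'' based on \eqref{just4+alfa}, Lemmas~\ref{Vishik} and~\ref{Lemma_main_Convergence}, and Proposition~\ref{prop_4_1}. Your explicit discussion of the bookkeeping for the two-parameter scale $\{\varepsilon^{k-p\alpha}\}$---in particular, the observation that one must take $M$ large enough so that both exponents $M-\alpha-1$ and $\gamma M-\gamma/2$ exceed a prescribed $M_0$, and then check that the surplus terms of $\mathcal{L}_n^{(M)}$ beyond $\mathcal{L}_n^{(M_0)}$ are themselves $\overline{o}(\varepsilon^{M_0})$---is more detailed than what the paper spells out, but it is exactly the mechanism implicit in the paper's Definition~\ref{def_4*1} and in the passage from \eqref{just5} to \eqref{just7} in the $\alpha=0$ case.
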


From  \eqref{just9+alfa} with $M= 3$ it follows the corollary.
\begin{corollary}\label{corollary1+alfa}
For the difference between the eigenfunction  $u_n^\varepsilon$ of the problem \eqref{1.1} and the partial sum
$$
\mathfrak{U}_n^{(1-\alpha)} = \sum\limits_{i=1}^3 \chi_{\ell_0}^{(i)} \left(\frac{x_i}{\varepsilon^\gamma}\right)
    \left( W_{n}^{(i)}(x_i) + \varepsilon\, w_{1-\alpha,n}^{(i)} (x_i) \right)
    +
    \Big(1 - \sum\limits_{i=1}^3 \chi_{\ell_0}^{(i)} \left(\frac{x_i}{\varepsilon^\gamma}\right)\Big) \left(W_n^{(1)}(0) + \varepsilon^{1-\alpha} w_{1-\alpha,n}^{(1)}(0)\right), \quad x\in\Omega_\varepsilon,
$$
of \eqref{asymp_expansion+alpha},  the asymptotic estimate
\begin{equation}\label{t6+alfa}
\frac{1}{\sqrt{|\Omega_\varepsilon|}}\, \left\| \, u_n^\varepsilon - \tau_3(\varepsilon) \, \mathfrak{U}_n^{(1-\alpha)} \right\|_{H^1(\Omega_\varepsilon)} =
 \overline{o}\big(\varepsilon^{1 - \alpha}\big) \quad (\text{as} \ \ \varepsilon \to 0)
\end{equation}
holds; \ in addition,
\begin{equation}\label{t9+alfa}
 \left\| \, E^{(i)}_\varepsilon(u_n^\varepsilon) - \tau_3(\varepsilon) \,W_{n}^{(i)} \right\|_{H^1(I_{\varepsilon, \gamma}^{(i)})} \leq C_1 \,
 \varepsilon^{1-\alpha},
\end{equation}
\begin{equation}\label{t10+alfa}
\max\nolimits_{x_i\in \overline{I_{\varepsilon, \gamma}^{(i)}}} \left| \, E^{(i)}_\varepsilon(u_n^\varepsilon)(x_i) - \tau_3(\varepsilon) \,W_{n}^{(i)}(x_i)
\right|  \leq C_2 \,  \varepsilon^{1-\alpha},
\end{equation}
where $I_{\varepsilon, \gamma}^{(i)}:= \{x: \ x_i\in (3\ell_0\varepsilon^\gamma, \ell_i),  \ \overline{x}_i=0\},  \ \ i\in \{1, 2, 3\}.$
\end{corollary}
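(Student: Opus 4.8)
The plan is to deduce the corollary directly from the justification estimate \eqref{just9+alfa} specialized to $M=3$, exactly in the spirit of \cite[Corollary 5.1]{Mel_Klev_M2AS-2018} used in the case $\alpha=0$. First I would record two elementary facts: the norm $\|\cdot\|_\varepsilon$ generated by \eqref{1.4} is uniformly equivalent to $\|\cdot\|_{H^1(\Omega_\varepsilon)}$ (the inequalities following \eqref{est4}), and $\sqrt{|\Omega_\varepsilon|}$ is of order $\varepsilon$ since the thin junction has cross-sections of area $\mathcal{O}(\varepsilon^2)$. Taking $M=3$ in \eqref{just9+alfa} and dividing by $\sqrt{|\Omega_\varepsilon|}$ therefore controls $\tfrac{1}{\sqrt{|\Omega_\varepsilon|}}\|u_n^\varepsilon-\tau_3(\varepsilon)\,\mathfrak{U}^{(3)}_n\|_{H^1(\Omega_\varepsilon)}$ by $\overline{o}(\varepsilon^{2-\alpha})+\overline{o}(\varepsilon^{5\gamma/2})$, which is $\overline{o}(\varepsilon^{1-\alpha})$ because $\gamma\in(\tfrac23,1)$ and $\alpha<1$.

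The remaining task for \eqref{t6+alfa} is to replace the full partial sum $\mathfrak{U}^{(3)}_n$ by the two–term sum $\mathfrak{U}^{(1-\alpha)}_n$, which collects precisely the terms of order $0$ and $1-\alpha$. By the triangle inequality it suffices to estimate $\tau_3(\varepsilon)\,\|\mathfrak{U}^{(3)}_n-\mathfrak{U}^{(1-\alpha)}_n\|_\varepsilon$. By Proposition~\ref{prop_4_1} there are no nonzero coefficients with exponent in the gap $(0,1-\alpha)$, so every summand of the difference $\mathfrak{U}^{(3)}_n-\mathfrak{U}^{(1-\alpha)}_n$ carries a power $\varepsilon^{k-p\alpha}$ with $k-p\alpha>1-\alpha$ (the leading discarded exponent being $2-2\alpha$ when $\alpha>\tfrac12$). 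Each regular summand $\varepsilon^{k-p\alpha}\,\overline{u}_{k-p\alpha}$ has $\|\cdot\|_\varepsilon$–norm of order $\varepsilon^{k-p\alpha}\cdot\varepsilon$, since a profile depending only on the longitudinal variable contributes $\int_{\Omega^{(i)}_\varepsilon}|w'|^2\,dx\sim\varepsilon^2$; the inner summands $\varepsilon^{k-p\alpha}\,\overline{N}_{k-p\alpha}$ are either constant (for exponents below $1$) or exponentially matched to the regular part by Remark~\ref{rem_exp-decrease}, so their contribution is of the same or smaller order. Hence $\|\mathfrak{U}^{(3)}_n-\mathfrak{U}^{(1-\alpha)}_n\|_\varepsilon=\mathcal{O}(\varepsilon^{3-2\alpha})$, and after dividing by $\sqrt{|\Omega_\varepsilon|}\sim\varepsilon$ and using $\tau_3\to\pi^{-1/2}$ (see \eqref{ta-m}) we obtain $\mathcal{O}(\varepsilon^{2-2\alpha})=\overline{o}(\varepsilon^{1-\alpha})$, which finishes \eqref{t6+alfa}.

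To pass to the one-dimensional estimates \eqref{t9+alfa}, I would apply the cross-sectional averaging operator $E^{(i)}_\varepsilon$ together with the boundedness inequality $\|E^{(i)}_\varepsilon v\|_{H^1(I_i)}\le C\varepsilon^{-1}\|v\|_{H^1(\Omega^{(i)}_\varepsilon)}$ (of the type proved in \cite[Lemma 2.2]{Mel_Pop_2012}). On $I^{(i)}_{\varepsilon,\gamma}$ the cut-offs $\chi^{(i)}_{\ell_0}(x_i/\varepsilon^\gamma)$ equal $1$, so $\mathfrak{U}^{(1-\alpha)}_n$ reduces there to the regular part $W^{(i)}_n+\varepsilon^{1-\alpha}w^{(i)}_{1-\alpha,n}$, and since these profiles are constant across the cross-section, $E^{(i)}_\varepsilon\mathfrak{U}^{(1-\alpha)}_n=W^{(i)}_n+\varepsilon^{1-\alpha}w^{(i)}_{1-\alpha,n}$. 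Writing $E^{(i)}_\varepsilon u^\varepsilon_n-\tau_3 W^{(i)}_n=E^{(i)}_\varepsilon\bigl(u^\varepsilon_n-\tau_3\mathfrak{U}^{(1-\alpha)}_n\bigr)+\tau_3\,\varepsilon^{1-\alpha}w^{(i)}_{1-\alpha,n}$, the first term is bounded by $C\varepsilon^{-1}\|u^\varepsilon_n-\tau_3\mathfrak{U}^{(1-\alpha)}_n\|_{H^1(\Omega^{(i)}_\varepsilon)}=\overline{o}(\varepsilon^{1-\alpha})$ thanks to \eqref{t6+alfa}, while the second is $\mathcal{O}(\varepsilon^{1-\alpha})$ because $w^{(i)}_{1-\alpha,n}$ is a fixed $H^1(I_i)$–function; this yields \eqref{t9+alfa}. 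Finally \eqref{t10+alfa} follows from \eqref{t9+alfa} through the continuous embedding $H^1(I^{(i)}_{\varepsilon,\gamma})\hookrightarrow C(\overline{I^{(i)}_{\varepsilon,\gamma}})$ and the Cauchy–Buniakovskii–Schwarz inequality, precisely as in the derivation of Corollary~\ref{corollary2}.

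The main obstacle I expect is the careful bookkeeping in the middle step: in the non-standard scale $\{\varepsilon^{k-p\alpha}\}$ one must be sure that the truncation $\mathfrak{U}^{(1-\alpha)}_n$ really omits only exponents strictly exceeding $1-\alpha$ (this is exactly the content of Proposition~\ref{prop_4_1}) and that the $\|\cdot\|_\varepsilon$–norms of both the regular and the inner summands scale as claimed; the inner terms near the node, whose gradients pick up factors $\varepsilon^{-1}$, must be handled via their exponential matching (Remark~\ref{rem_exp-decrease}) rather than crude volume bounds.
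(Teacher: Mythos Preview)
Your proposal is correct and follows precisely the route the paper intends: the paper simply states that the corollary follows from \eqref{just9+alfa} with $M=3$ (in the same spirit as Corollary~\ref{corollary1} and \cite[Corollary~5.1]{Mel_Klev_M2AS-2018}), and you have spelled out exactly that deduction. Your one delicate point---that the cut-off derivatives do not spoil the order because the regular and inner parts are matched (constant $N_{k-p\alpha}$ for exponents below $1$, exponential decay of $N_{k-p\alpha}-G^{(i)}_{k-p\alpha}$ otherwise)---is handled correctly, so there is nothing to add.
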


\subsection{The case $\alpha = \frac{m_0}{n_0}$}\label{rational case}

Recall that  ${m_0}, {n_0}$ are relatively prime numbers and ${m_0} <{n_0}.$
Substituting \eqref{regul+ration} and \eqref{exp-EVl+ration}  into the differential equation and boundary conditions of the problem~\eqref{1.1}, collecting the coefficients at the same power of $\varepsilon$,  we get the following relations:
\begin{eqnarray}
    -   \frac{d^2}{d{x_i}^2}\left(w_{\frac{k}{n_0}}^{(i)}(x_i)\right) &=&  \mu_{0}  \,  w_{\frac{k}{n_0}} ^{(i)} (x_i) + \sum_{q=1}^{k}
\mu_{\frac{q}{n_0}}  \,  w_{\frac{k-q}{n_0}} ^{(i)} (x_i), \quad x_i\in I_\varepsilon^{(i)}, \label{omega_probl_ration_alfa_k}
\\
w_{\frac{k}{n_0}}^{(i)}(\ell_i)&=&0, \quad i=1, 2, 3. \label{BC_w_ration_alfa_k}
\end{eqnarray}

Doing the same with \eqref{junc+ration} and \eqref{exp-EVl+ration}  and   matching  the expansions  \eqref{regul+ration} and  \eqref{junc+ration},
we get
\begin{equation}\label{N_probl_k+alfa_ration}
\left\{
\begin{array}{rcl}
    -  \Delta_{\xi}N_{\frac{k}{n_0}}(\xi)  &=&  \displaystyle{ \sum\limits_{q=0}^{k- 2n_0}} \mu_{\frac{q}{n_0}}  \,  N_{\frac{k-q}{n_0} -2}(\xi) , \quad \xi\in \Xi^{(i)},
\\\\
-  \Delta_{\xi}N_{\frac{k}{n_0}}(\xi)  &=&  \varrho_0(\xi)\displaystyle{ \sum\limits_{q=0}^{k+ m_0 - 2 n_0}}
\mu_{\frac{q}{n_0}}  \,  N_{\frac{k+ m_0 -q}{n_0} -2} (\xi) , \quad \xi\in \Xi^{(0)},
\\\\
   \partial_{{\boldsymbol \nu}_\xi}{N_{\frac{k}{n_0}}}(\xi) & = &
   0,     \quad \xi\in \partial \Xi,
\\[4mm]
 N_{\frac{k}{n_0}}(\xi)      & \sim &
   w^{(i)}_{\frac{k}{n_0}}(0) + \Psi^{(i)}_{\frac{k}{n_0}}(\xi_i),
   \quad \xi_i \to +\infty, \ \ \xi  \in \Xi^{(i)}, \quad i=1,2,3,
\end{array}
\right.
\end{equation}
where
$$
\Psi_{\frac{k}{n_0}}^{(i)}(\xi_i) =   \sum\limits_{j=1}^{k} \dfrac{\xi_i^j}{j!}\,
     \dfrac{d^j w_{\frac{k}{n_0} -j }^{(i)}}{dx_i^j} (0),
 \quad  i=1,2,3.
$$
Recall that coefficients with negative indices are considered to be zero in all sums.
The solvability condition for the corresponding problem for $\widetilde{N}_{\frac{k}{n_0}}$ (see \S~\ref{inner_asymp})  looks as follows:
\begin{equation}\label{trans*k-p*alpfa_ration}
    \sum\limits_{i=1}^3 h_i^2 (0) \frac{d w_{\frac{k}{n_0} -1}^{(i)}}{dx_i} (0)
 =  \boldsymbol{d_{\frac{k}{n_0} -1 }^*},
\end{equation}
where
\begin{multline}\label{const_d*k-p*alpfa_ration}
\boldsymbol{d_{\frac{k}{n_0} -1}^*}
 =  \sum\limits_{i=1}^3  h_i^2 (0) \, \sum\limits_{j=1}^{k-1} \frac{\ell_0^j}{j!} \, \sum_{q=0}^{k-n_0(j+1)}  \mu_{\frac{q}{n_0}} \,
    \dfrac{d^{j-1} w_{\frac{k - q}{n_0} -  j-1}^{(i)}}{d x_i^{j-1}} (0)
\\
-  \frac{1}{\pi}  \sum_{q=0}^{k-2 n_0} \mu_{\frac{q}{n_0}} \, \sum\limits_{i=1}^3 \int_{\Xi^{(i)}}\Big( N_{\frac{k-q}{n_0} -2}(\xi) - G^{(i)}_{\frac{k-q}{n_0} -2}(\xi_i)\Big)  \,  d\xi
   \\     -  \frac{1}{ \pi}\int_{\Xi^{(0)}} \varrho_0(\xi) \sum_{q=0}^{k+ m_0 -2n_0}  \mu_{\frac{q}{n_0}} \, N_{\frac{k+m_0 -q}{n_0}-2}(\xi)  \, d\xi.
\end{multline}

Let us show how to determine coefficients of the expansions \eqref{regul+ration}, \eqref{junc+ration} and \eqref{exp-EVl+ration}.
The first and second terms of these series are defined in the same way as for an irrational number $\alpha$ (see Subsection~\ref{irrational}). So, $\mu_0$ is an eigenvalue and $\{w^{(i)}_0\}_{i=1}^3$ is the corresponding eigenfunction of the limit
spectral problem~\eqref{limitSpeProb} (the eigenfunctions satisfy the conditions \eqref{EF_lim_prob} and \eqref{EF_addion_lim_prob});
$N_0 \equiv w^{(1)}(0).$  Similarly we conclude that Proposition~\ref{prop_4_1} holds; this means that
\begin{equation}\label{first_terms_0}
\mu_{\frac{k}{n_0}} = 0, \quad w^{(i)}_{\frac{k}{n_0}} \equiv 0, \quad N_{\frac{k}{n_0}} \equiv 0 \quad \text{for} \ \ k\in \{1,\ldots,n_0-m_0 -1\}, \quad  \ n_0-m_0 \ge 2.
\end{equation}
The problem for  second nonzero terms $\{w^{(i)}_{\frac{n_0 -m_0}{n_0}}\}_{i=1}^3$ and
$\mu_{\frac{n_0 -m_0}{n_0}}$ coincides with the problem \eqref{w1-alpfa} and $\mu_{\frac{n_0 -m_0}{n_0}}$ is determined by the formula \eqref{mu1-alfa}. In addition, $N_{\frac{n_0 -m_0}{n_0}} \equiv w^{(1)}_{\frac{n_0 -m_0}{n_0}}(0).$

The next terms are defined as follows. From \eqref{N_probl_k+alfa_ration} with $k=n_0 -m_0+1$ we get the problem
\begin{equation}\label{junc_probl_n_0 -m_0+1}
 \left\{\begin{array}{rcll}
  -\Delta_{\xi}{N_{\frac{n_0 -m_0+1}{n_0}}}(\xi) & = &
   0,                         &
   \quad \xi\in\Xi,
 \\[1mm]
   \partial_{\nu_\xi}{N_{\frac{n_0 -m_0+1}{n_0}}}(\xi) & = &
   0,                               &
   \quad \xi\in \partial \Xi,
 \\
   N_{\frac{n_0 -m_0+1}{n_0}}(\xi)                                                               & \sim &
   w^{(i)}_{\frac{n_0 -m_0+1}{n_0}}(0) + \xi_i \dfrac{d w_{\frac{1 -m_0}{n_0}}^{(i)}}{dx_i}(0),                  &
   \quad \xi_i \to +\infty, \ \ {\xi}_i \in \Xi^{(i)}, \ i=1,2,3.
 \end{array}\right.
\end{equation}
If  $m_0=1,$ then $N_{\frac{n_0 -m_0+1}{n_0}}=N_1$  and this problem coincides with \eqref{new_junc_probl_1}.
The solvability condition for the corresponding problem for $\widetilde{N}_{1}$ is satisfied since it is  the second Kirchhoff condition in the limit problem~\eqref{limitSpeProb}. To define $\{w^{(i)}_{1}\}_{i=1}^3$ and
$\mu_{1}$ we get the problem
\begin{equation}\label{Prob_w_1}
 \left\{\begin{array}{rclr}
  -  \dfrac{d^2 w_1^{(i)}}{d{x_i^2}}(x_i) - \mu_0\, w_1^{(i)}(x_i) & = &
  \mu_{\frac{n_0 -1}{n_0}} \, w_{\frac{1}{n_0}}^{(i)}(x_i) + \mu_1\, w_0^{(i)}(x_i) , &                                x_i\in I_i, \quad i=1,2,3,
 \\[2mm]
    w_1^{(i)}(\ell_i) & = &
    0, &           i=1,2,3,
 \\[2mm]
   w_{1}^{(1)} (0) \ \, = \ \, w_{1}^{(2)} (0) - \boldsymbol{\delta_{1}^{(2)}} & = &
                                   w_{1}^{(3)} (0) - \boldsymbol{\delta_{1}^{(3)}}, &
 \\[3mm]
    \sum\limits_{i=1}^3 h_i^2 (0) \,\dfrac{d w_{1}^{(i)}}{dx_i} (0) & = & \boldsymbol{d_{1}^*} , &
 \end{array}\right.
\end{equation}
where $\boldsymbol{\delta_{1}^{(2)}}$ and $\boldsymbol{\delta_{1}^{(2)}}$ are defined with \eqref{delta_1}, $\boldsymbol{d_{1}^*}= \ell_0\, \mu_0 \, w_{0}^{(1)} (0) \sum\limits_{i=1}^3 h_i^2 (0)$ if $n_0>2$ and
$$
 \boldsymbol{d_{1}^*}= \ell_0\, \mu_0 \, w_{0}^{(1)} (0) \sum\limits_{i=1}^3 h_i^2 (0)
 -  \frac{1}{ \pi}\int_{\Xi^{(0)}} \varrho_0(\xi) \sum_{q=0}^{1}  \mu_{\frac{q}{2}} \, N_{\frac{1-q}{2}}(\xi)  \, d\xi \quad \text{if} \ \ n_0=2
$$
 thanks to  \eqref{first_terms_0} $(\mu_{\frac{1}{n_0}} =0,$ $w_{\frac{1}{n_0}}^{(i)} \equiv 0,$ and $N_{\frac{1}{n_0}} \equiv 0$ if $n_0 > 2).$ Then,  as in Subsection~\ref{justification}, we can uniquely determine $\{w^{(i)}_{1}\}_{i=1}^3$ and
$\mu_{1}.$

\medskip

 If  $m_0 >1,$ then $w_{\frac{1 -m_0}{n_0}}^{(i)}\equiv 0$ in \eqref{junc_probl_n_0 -m_0+1} and
as a result we get that $N_{\frac{n_0 -m_0+1}{n_0}} \equiv w^{(1)}_{\frac{n_0 -m_0+1}{n_0}}(0)$ and
\begin{equation}\label{first_trans_n_0+m_0+1}
w^{(1)}_{\frac{n_0 -m_0+1}{n_0}}(0)=w^{(2)}_{\frac{n_0 -m_0+1}{n_0}}(0)=w^{(3)}_{\frac{n_0 -m_0+1}{n_0}}(0).
\end{equation}
From \eqref{trans*k-p*alpfa_ration} and
\eqref{const_d*k-p*alpfa_ration}  with $k=2n_0 -m_0+1$ it follows that
\begin{equation}\label{second_trans_n_0+m_0+1}
    \sum\limits_{i=1}^3 h_i^2 (0) \frac{d w_{\frac{n_0 -m_0+1}{n_0}}^{(i)}}{dx_i} (0)
 =
-  \frac{1}{ \pi}\int_{\Xi^{(0)}} \varrho_0(\xi)
\big(\mu_{0} \, N_{\frac{1}{n_0}}(\xi) +
\mu_{\frac{1}{n_0}} \, N_{0} \big)  \, d\xi.
\end{equation}
Relations \eqref{first_trans_n_0+m_0+1} and  \eqref{second_trans_n_0+m_0+1} are transmission conditions for
the differential equations
\begin{equation}
    -   \frac{d^2}{d{x_i}^2}\left(w_{\frac{n_0 -m_0+1}{n_0}}^{(i)}(x_i)\right) -  \mu_{0}  \,  w_{\frac{n_0 -m_0+1}{n_0}} ^{(i)} (x_i) =
\mu_{\frac{n_0 -m_0}{n_0}}  \,  w_{\frac{1}{n_0}} ^{(i)} (x_i) +
\mu_{\frac{n_0 -m_0+1}{n_0}}  \,  w_{0} ^{(i)} (x_i), \quad x_i\in I_\varepsilon^{(i)},
\label{omega_probl__n_0+m_0+1}
\end{equation}
$ i=1,2,3.$ Thus, if $\mu_{\frac{1}{n_0}} =0,$ $w_{\frac{1}{n_0}}^{(i)} \equiv N_{\frac{1}{n_0}} \equiv 0$
$(n_0-m_0 \ge 2,$ see  \eqref{first_terms_0}),
then $\mu_{\frac{n_0 -m_0+1}{n_0}} =0,$ $w^{(i)}_{\frac{n_0 -m_0+1}{n_0}}\equiv N_{\frac{n_0 -m_0+1}{n_0}} \equiv 0.$
If the values $\mu_{\frac{1}{n_0}},$ $w_{\frac{1}{n_0}}^{(i)},$ $N_{\frac{1}{n_0}}$ don't vanish $(n_0-m_0 = 1, \ m_0 >1,$ for example $n_0=4$ and $m_0=3),$ then similarly as in Subsection~\ref{justification}  we uniquely determine $\{w^{(i)}_{\frac{2}{n_0}}\}_{i=1}^3$ and
$$
\mu_{\frac{2}{n_0}}= -  \frac{\mu_0\,  w^{(1)}_{0}(0) }{ \pi} \Big(\mu_{0} \, w^{(1)}_{\frac{1}{n_0}}(0)  +
\mu_{\frac{1}{n_0}} \, w^{(1)}_{0}(0) \Big) \int_{\Xi^{(0)}} \varrho_0(\xi)   \, d\xi.
$$

Doing the inductive step as in Subsection~\ref{justification},  we determine all  coefficients of the expansions \eqref{regul+ration}, \eqref{junc+ration} and \eqref{exp-EVl+ration}.

\subsection{Justification  $(\alpha=\frac{m_0}{n_0}$)}\label{subsec_just+alfa_rational}

With the help of the cut-off functions \eqref{cut-off-functions} we construct the series
\begin{equation}\label{asymp_expansion+alfa_rational}
    \sum\limits_{k=0}^{+\infty} \varepsilon^{\frac{k}{n_0}}
    \Big(
    \overline{u  }_{\frac{k}{n_0}} (x; \, \varepsilon, \, \gamma)
  + \overline{N}_{\frac{k}{n_0}}(x; \, \varepsilon, \, \gamma)
    \Big),
\quad x\in\Omega_\varepsilon,
\end{equation}
where
$$
\overline{u}_{\frac{k}{n_0}} (x; \, \varepsilon, \, \gamma)
 := \sum\limits_{i=1}^3 \chi_{\ell_0}^{(i)} \left(\frac{x_i}{\varepsilon^\gamma}\right)
     w_{\frac{k}{n_0}}^{(i)} (x_i),
\quad
\overline{N}_{\frac{k}{n_0}} (x; \, \varepsilon, \, \gamma)
 := \left(1 - \sum\limits_{i=1}^3 \chi_{\ell_0}^{(i)} \left(\frac{x_i}{\varepsilon^\gamma}\right) \right)
    N_{\frac{k}{n_0}} \left( \frac{x}{\varepsilon} \right),
$$
$\gamma$ is a fixed number from the interval $(\frac23, 1).$
Denote by
\begin{equation}\label{aaN+alpha+ration}
\mathfrak{U}^{(M)}(x;\varepsilon)
 :=  \sum\limits_{k=0}^{M} \varepsilon^{\frac{k}{n_0}}
    \Big(
    \overline{u  }_{\frac{k}{n_0}} (x; \, \varepsilon, \, \gamma)
  + \overline{N}_{\frac{k}{n_0}}(x; \, \varepsilon, \, \gamma)
    \Big),
\quad x\in\Omega_\varepsilon,
\end{equation}
the partial sum of \eqref{asymp_expansion+alfa_rational}, where $M\in \Bbb N, \, M \ge 3,$
and by
\begin{equation}\label{part_EV+ration}
 \mathcal{L}^{(M)}(\varepsilon) :=  \sum\limits_{k=0}^{M }\varepsilon^{\frac{k}{n_0}} \mu_{\frac{k}{n_0}}
\end{equation}
 the partial sum of \eqref{exp-EVl+ration}.  Obviously, $ \mathfrak{U}^{(M)}  \in \mathcal{H}_\varepsilon.$

We substitute $\mathfrak{U}^{(M)}$ and $\mathcal{L}^{(M)}$  into the problem~(\ref{1.1}) in place of $u^\varepsilon$ and $\lambda(\varepsilon)$ and similarly as in Subsection~\ref{subsec_just+alpfa}  find
residuals. But now we can more accurately indicate the main order of these residues, namely
\begin{equation}\label{just4+alfa_rational}
\left\| A_\varepsilon\left( \frac{\mathfrak{U}^{(M)}}{\|\mathfrak{U}^{(M)}\|_\varepsilon}\right)  - \left(\mathcal{L}^{(M)}(\varepsilon)\right)^{-1} \, \frac{\mathfrak{U}^{(M)}}{\|\mathfrak{U}^{(M)}\|_\varepsilon}  \right\|_\varepsilon
 \le C_M \left(  \varepsilon^{\frac{M+1}{n_0} - \frac{m_0}{n_0} -1} + \varepsilon^{\gamma \frac{M+1}{n_0} - \frac{\gamma}{2}} \right),
\end{equation}
where the constant $C_M$ is independent of $\varepsilon.$

Then, using Lemmas~\ref{Vishik} and \ref{Lemma_main_Convergence}, we prove, as before, the following theorem.

\begin{theorem}[$\alpha \in (0, 1) \cap \Bbb Q$ ]
  For each $n\in \Bbb N$ the $n$-th eigenvalue $\lambda_n(\varepsilon)$ of the problem \eqref{1.1}  is  decomposed into the asymptotic series
$$
\lambda_n(\varepsilon) \approx \Lambda_n + \varepsilon^{\frac{n_0-m_0}{n_0}} \mu_{\frac{n_0-m_0}{n_0},n} +
\sum\limits_{k=n_0-m_0+1}^{+\infty}\varepsilon^{\frac{k}{n_0}} \, \mu_{\frac{k}{n_0}, n}
 \quad \text{as} \quad \varepsilon \to 0,
$$
where $\Lambda_n$ is the $n$-th eigenvalue of the limit spectral problem \eqref{limitSpeProb}
and
\begin{equation}\label{mu_1-alfa_n+ration}
\mu_{\frac{n_0 -m_0}{n_0}, n} = -  \frac{\big(\Lambda_n \, W_{n}^{(1)}(0)\big)^2}{\pi}  \, \int_{\Xi^{(0)}} \varrho_0(\xi)  \, d\xi.
\end{equation}

For the corresponding eigenfunction $u_n^\varepsilon$ normalized by \eqref{normalized} and \eqref{normalized+} one can construct  the approximation function $\mathfrak{U}_n^{(M)} \in \mathcal{H}_\varepsilon$ defined with \eqref{aaN+alpha+ration} $(M\in \Bbb N, \ M \ge n_0)$ such that the asymptotic estimate
\begin{equation}\label{just9+alfa+ration}
  \left\| u_n^\varepsilon - \tau_M(\varepsilon) \,  \mathfrak{U}^{(M)}_{n} \right\|_\varepsilon \le
C_M \left(  \varepsilon^{\frac{M+1}{n_0} - \frac{m_0}{n_0}} + \varepsilon^{\gamma \frac{M+1}{n_0} +1 - \frac{\gamma}{2}} \right)
\end{equation}
 holds  for any $\varepsilon$ small enough.
Here  $\tau_M(\varepsilon) = \frac{\varepsilon}{\|\mathfrak{U}^{(M)}_{n}\|_\varepsilon}$
and it has the asymptotics \eqref{ta-m}, $\gamma$ is a fixed number from the interval $(\frac23, 1).$
\end{theorem}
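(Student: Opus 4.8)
The plan is to follow the scheme already used for $\alpha=0$ in Subsection~\ref{Sec_justification} and for irrational $\alpha$ in Subsection~\ref{subsec_just+alpfa}, transplanting it to the collapsed asymptotic scale $\{\varepsilon^{k/n_0}\}_{k\in\Bbb N_0}$. All coefficients $\{w^{(i)}_{k/n_0}\}$, $\{N_{k/n_0}\}$ and $\{\mu_{k/n_0}\}$ have already been constructed recursively in Subsection~\ref{rational case}, with $\mu_0=\Lambda_n$, $\widetilde{w}_0=\widetilde{W}_n$, and $\mu_{k/n_0}=0$, $w^{(i)}_{k/n_0}\equiv N_{k/n_0}\equiv 0$ for $1\le k\le n_0-m_0-1$; the first nontrivial correction therefore sits at order $\varepsilon^{(n_0-m_0)/n_0}$ and carries the coefficient \eqref{mu_1-alfa_n+ration}. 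First I would assemble the partial sums $\mathfrak{U}^{(M)}$ in \eqref{aaN+alpha+ration} and $\mathcal{L}^{(M)}$ in \eqref{part_EV+ration}, glued together by the cut-off functions \eqref{cut-off-functions}, and check that $\mathfrak{U}^{(M)}\in\mathcal{H}_\varepsilon$, so that the whole argument can be run through the self-adjoint compact operator $A_\varepsilon$ of \eqref{operator1}.

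The core of the proof is the residual bound \eqref{just4+alfa_rational}. Substituting $\mathfrak{U}^{(M)}$ and $\mathcal{L}^{(M)}$ into \eqref{1.1} in place of $u^\varepsilon$ and $\lambda(\varepsilon)$, and invoking the recurrence relations \eqref{omega_probl_ration_alfa_k}--\eqref{BC_w_ration_alfa_k} inside the cylinders together with \eqref{N_probl_k+alfa_ration} in a neighbourhood of the node, all terms up to order $\varepsilon^{M/n_0}$ cancel, and three families of discrepancies remain: (i) the truncation tails produced in the cylinders and in the node, (ii) the node tails carrying the extra weight $\varepsilon^{-\alpha}=\varepsilon^{-m_0/n_0}$ coming from $\rho_\varepsilon$, and (iii) the commutator terms $\mathfrak{G}^{(i)}_\varepsilon$ generated when the Laplacian hits the cut-offs $\chi^{(i)}_{\ell_0}$. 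Exactly as in \eqref{res_cylinder}--\eqref{reminder1}, I would estimate (i) and (iii) with the Friedrichs-type inequalities \eqref{est3}, \eqref{est4}, \eqref{ineq1}, the exponential decay of $N_{k/n_0}-G^{(i)}_{k/n_0}$ from Remark~\ref{rem_exp-decrease}, and the Taylor expansion of the $w^{(i)}_{k/n_0}$ at $x_i=0$ matching the polynomial growth of the inner terms. The new point relative to the irrational case is that, because the scale is now the single geometric sequence $\{\varepsilon^{k/n_0}\}$, the $\overline{o}(\cdot)$ symbols can be replaced by sharp powers, the leading node discrepancy being of order $\varepsilon^{(M+1)/n_0-m_0/n_0}$ once the $\varepsilon^{-m_0/n_0}$ weight is accounted for. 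Dividing by $\|\mathfrak{U}^{(M)}\|_\varepsilon$, which is of order $\varepsilon$ (cf. \eqref{additional_est2} and \eqref{ta-m}), then yields precisely the two competing powers $\varepsilon^{(M+1)/n_0-m_0/n_0-1}$ and $\varepsilon^{\gamma(M+1)/n_0-\gamma/2}$ in \eqref{just4+alfa_rational}.

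With \eqref{just4+alfa_rational} in hand, Lemma~\ref{Vishik} applied to $A_\varepsilon$ with $\mu=(\mathcal{L}^{(M)}(\varepsilon))^{-1}$ and the normalized vector $\mathfrak{U}^{(M)}/\|\mathfrak{U}^{(M)}\|_\varepsilon$ produces an eigenvalue of $A_\varepsilon$, i.e.\ a reciprocal eigenvalue of \eqref{1.1}, within the stated distance of $1/\mathcal{L}^{(M)}(\varepsilon)$, together with the eigenfunction bound \eqref{just9+alfa+ration}; since $M\ge n_0$ is arbitrary this delivers the full asymptotic series for $\lambda_n(\varepsilon)$. To be sure that the eigenvalue thus detected is the $n$-th one, I would fix the spectral gap $d_0$ around $1/\Lambda_n$ exactly as in Subsection~\ref{Sec_justification}, using the simplicity of the spectrum of \eqref{limitSpeProb}, and invoke the convergence Lemma~\ref{Lemma_main_Convergence} (valid for all $\alpha<1$) together with the sign normalizations \eqref{normalized+} and \eqref{EF_addion_lim_prob} to pin down the index. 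The main obstacle is step (ii): tracking how the concentrated-mass weight $\varepsilon^{-m_0/n_0}$ interacts with the collapsed scale so that the sharp exponent $\tfrac{M+1}{n_0}-\tfrac{m_0}{n_0}-1$ in \eqref{just4+alfa_rational} is actually attained, rather than merely an $\overline{o}$-estimate; once this bookkeeping is carried out, the remaining arguments are routine adaptations of the already-established $\alpha=0$ and irrational-$\alpha$ cases.
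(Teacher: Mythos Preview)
Your proposal is correct and follows essentially the same route as the paper: the paper's own argument for this theorem is nothing more than the statement that one substitutes $\mathfrak{U}^{(M)}$ and $\mathcal{L}^{(M)}$ into \eqref{1.1}, finds residuals ``similarly as in Subsection~\ref{subsec_just+alpfa}'' but now with sharp powers (yielding \eqref{just4+alfa_rational}), and then applies Lemmas~\ref{Vishik} and~\ref{Lemma_main_Convergence} ``as before.'' You have correctly identified both the carry-over steps and the single new bookkeeping point, namely that the collapsed scale $\{\varepsilon^{k/n_0}\}$ replaces the $\overline{o}(\cdot)$ bounds of the irrational case by the explicit exponents in \eqref{just4+alfa_rational}.
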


From  \eqref{just9+alfa+ration} with $M= n_0$ it follows the corollary.
\begin{corollary}\label{corollary1+alfa+ration}
For the difference between the eigenfunction  $u_n^\varepsilon$ of the problem \eqref{1.1} and the partial sum
$$
\mathfrak{U}_n^{({\frac{n_0 -m_0}{n_0}})}(x) = \sum\limits_{i=1}^3 \chi_{\ell_0}^{(i)} \left(\frac{x_i}{\varepsilon^\gamma}\right)
    \left( W_{n}^{(i)}(x_i) + \varepsilon\, w_{\frac{n_0 -m_0}{n_0}, n}^{(i)} (x_i) \right)
$$
$$
    +
    \Big(1 - \sum\limits_{i=1}^3 \chi_{\ell_0}^{(i)} \left(\frac{x_i}{\varepsilon^\gamma}\right)\Big) \left(W_n^{(1)}(0) + \varepsilon^{\frac{n_0 -m_0}{n_0}} w_{\frac{n_0 -m_0}{n_0}, n}^{(1)}(0)\right), \quad x\in\Omega_\varepsilon,
$$
of \eqref{asymp_expansion+alfa_rational},  the asymptotic estimate
\begin{equation}\label{t6+alfa+ration}
\frac{1}{\sqrt{|\Omega_\varepsilon|}}\, \left\| \, u_n^\varepsilon - \tau_3(\varepsilon) \, \mathfrak{U}_n^{({\frac{n_0 -m_0}{n_0}})} \right\|_{H^1(\Omega_\varepsilon)} \leq C_1 \, \varepsilon^{\frac{n_0 - m_0+1}{n_0}}
\end{equation}
holds; \ in addition, the asymptotic estimates  \eqref{t9+alfa} and \eqref{t10+alfa}  are satisfied as well.
\end{corollary}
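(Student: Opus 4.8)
The plan is to read off Corollary~\ref{corollary1+alfa+ration} from the eigenfunction estimate \eqref{just9+alfa+ration} of the preceding theorem, in exactly the manner in which Corollaries~\ref{corollary2} and \ref{corollary1+alfa} were obtained from their eigenfunction estimates. First I would specialize \eqref{just9+alfa+ration} to $M=n_0$, so that the partial sum $\mathfrak{U}_n^{(n_0)}$ retains every coefficient of order $\varepsilon^{k/n_0}$ with $0\le k\le n_0$ and the right-hand side is $C_{n_0}\bigl(\varepsilon^{\frac{n_0-m_0+1}{n_0}}+\varepsilon^{\gamma\frac{n_0+1}{n_0}+1-\frac{\gamma}{2}}\bigr)$; for $\gamma\in(\frac23,1)$ the first exponent is the smaller one. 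The two structural facts I would lean on throughout are the uniform equivalence of $\|\cdot\|_\varepsilon$ and $\|\cdot\|_{H^1(\Omega_\varepsilon)}$ proved in Section~\ref{Sec_2}, and the volume scaling $|\Omega_\varepsilon|=\mathcal{O}(\varepsilon^2)$, which makes the weight $|\Omega_\varepsilon|^{-1/2}$ comparable to $\varepsilon^{-1}$ and thus turns the absolute estimate into the relative (rescaled) estimate \eqref{t6+alfa+ration}.

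Next I would pass from $\mathfrak{U}_n^{(n_0)}$ to the two-term sum $\mathfrak{U}_n^{(\frac{n_0-m_0}{n_0})}$ of the statement through the decomposition
\[
u_n^\varepsilon-\tau_3(\varepsilon)\,\mathfrak{U}_n^{(\frac{n_0-m_0}{n_0})}
 =\bigl(u_n^\varepsilon-\tau_{n_0}(\varepsilon)\,\mathfrak{U}_n^{(n_0)}\bigr)
 +\tau_{n_0}(\varepsilon)\bigl(\mathfrak{U}_n^{(n_0)}-\mathfrak{U}_n^{(\frac{n_0-m_0}{n_0})}\bigr)
 +\bigl(\tau_{n_0}(\varepsilon)-\tau_3(\varepsilon)\bigr)\mathfrak{U}_n^{(\frac{n_0-m_0}{n_0})}.
\]
The first bracket is governed by \eqref{just9+alfa+ration}. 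The middle bracket collects precisely the discarded coefficients, those with indices $\frac{k}{n_0}$, $n_0-m_0+1\le k\le n_0$; since for rectilinear cylinders all cross-sectional correctors vanish (the reduction at the beginning of Section~\ref{Sec_4}), each discarded term is a lifted function of the longitudinal variable together with an inner term in the node, and the lowest order present is $\varepsilon^{\frac{n_0-m_0+1}{n_0}}$. For the last bracket I would use the common asymptotics \eqref{ta-m}, giving $\tau_{n_0}(\varepsilon)-\tau_3(\varepsilon)=\mathcal{O}(\varepsilon^{\gamma/2})$, together with $\|\mathfrak{U}_n^{(\frac{n_0-m_0}{n_0})}\|_{H^1(\Omega_\varepsilon)}=\mathcal{O}(\varepsilon)$. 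Summing the three contributions, applying the norm equivalence and dividing by $|\Omega_\varepsilon|^{1/2}$ should produce the claimed leading power $\varepsilon^{\frac{n_0-m_0+1}{n_0}}$ of \eqref{t6+alfa+ration}.

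The averaged bounds \eqref{t9+alfa} and \eqref{t10+alfa} I would establish exactly as in Corollaries~\ref{corollary2} and \ref{corollary1+alfa}. Applying the cross-sectional averaging operator $E^{(i)}_\varepsilon$ to the inequality just obtained, and using that $E^{(i)}_\varepsilon$ reproduces a profile constant on cross-sections---so that it sends the lift of $W_n^{(i)}$ back to $W_n^{(i)}$ while the inner part makes no contribution on $I^{(i)}_{\varepsilon,\gamma}$---reduces matters to a one-dimensional estimate on $I^{(i)}_{\varepsilon,\gamma}$. The Cauchy--Buniakovskii--Schwarz inequality converts the three-dimensional $H^1(\Omega^{(i)}_{\varepsilon,\gamma})$ bound into the one-dimensional bound \eqref{t9+alfa} (this is where a factor $\varepsilon^{-1}\sim|\Omega_\varepsilon|^{-1/2}$ is absorbed), and the continuous embedding $H^1(I^{(i)}_{\varepsilon,\gamma})\hookrightarrow C(\overline{I^{(i)}_{\varepsilon,\gamma}})$ upgrades it to the uniform estimate \eqref{t10+alfa}.

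I expect the genuine difficulty to lie entirely in the order bookkeeping of the second paragraph: one must verify that, once the weight $|\Omega_\varepsilon|^{-1/2}\sim\varepsilon^{-1}$ is applied, the contribution of the first bracket (which the theorem controls only in the $\|\cdot\|_\varepsilon$-norm) and the contribution of the lowest discarded term (computed directly in $H^1$, where lifted graph functions carry an extra cross-sectional factor $\varepsilon$) both collapse to the single power $\varepsilon^{\frac{n_0-m_0+1}{n_0}}$. Getting this matching right---choosing $M$ so that the full-residual bracket is pushed below the target order, and confirming that the first neglected corrector, not the residual, sets the rate after rescaling---is the crux; the remaining manipulations merely repeat verbatim the already-justified schemes for $\alpha=0$ and for irrational $\alpha$.
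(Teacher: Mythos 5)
Your overall route is the one the paper intends (its entire proof is the single sentence that the corollary follows from \eqref{just9+alfa+ration} with $M=n_0$, unpacked exactly as you describe: split off the discarded tail, estimate it in $H^1(\Omega_\varepsilon)$, rescale by $|\Omega_\varepsilon|^{-1/2}$, then Cauchy--Buniakovskii--Schwarz and the one-dimensional embedding for \eqref{t9+alfa} and \eqref{t10+alfa}). The problem is that the central bookkeeping step, which you yourself single out as the crux, fails with the choice $M=n_0$ to which you commit in the first paragraph. The theorem controls the residual bracket only in the $\|\cdot\|_\varepsilon$-norm, by $C\,\varepsilon^{\frac{n_0-m_0+1}{n_0}}$; after multiplication by $|\Omega_\varepsilon|^{-1/2}\sim\varepsilon^{-1}$ this becomes $C\,\varepsilon^{\frac{1-m_0}{n_0}}$, which is only $O(1)$ when $m_0=1$ and actually unbounded when $m_0\ge 2$ --- nowhere near the target $\varepsilon^{\frac{n_0-m_0+1}{n_0}}$ in \eqref{t6+alfa+ration}. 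The discarded correctors, being lifted graph functions, carry the cross-sectional factor $\varepsilon$ in their $H^1(\Omega_\varepsilon)$-norms, but the residual estimate does not, so the two brackets live on different scales. To push the residual below the target after rescaling one must invoke the theorem with $M\ge 2n_0$ (legitimate, since \eqref{just9+alfa+ration} holds for every $M\ge n_0$, and harmless, since the extra discarded terms are of higher order); your closing remark gestures at "choosing $M$" appropriately, but the proof as written never does this, so it does not close.

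Two further steps are asserted rather than proved, and both fail at the same order. First, your treatment of the normalizing factors: bounding $(\tau_{n_0}-\tau_3)\,\mathfrak{U}_n^{(\frac{n_0-m_0}{n_0})}$ via \eqref{ta-m} gives $O(\varepsilon^{\gamma/2})\cdot O(\varepsilon)$, hence only $O(\varepsilon^{\gamma/2})$ after rescaling; since $\gamma/2<\tfrac12$ while the target exponent is $1-\alpha+\tfrac1{n_0}$, this exceeds the target whenever $\alpha\le\tfrac12$. The clean repair is to keep one and the same factor $\tau_M$ throughout, as the estimate \eqref{just9+alfa+ration} itself does. Second, your claim that the middle bracket is governed by "the lowest order present, $\varepsilon^{\frac{n_0-m_0+1}{n_0}}$" tacitly assumes every discarded term has $H^1$-norm of order $\varepsilon$ times its $\varepsilon$-prefactor. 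That is true for the terms whose inner coefficients $N_{k/n_0}$, $k<n_0$, are constants, but not for the discarded term $\varepsilon\,\overline{N}_1$: since $N_1$ grows linearly (to match $W_n^{(i)}$), one has $\nabla_x\big[\varepsilon N_1(x/\varepsilon)\big]=\nabla_\xi N_1=O(1)$ on the inner region of measure $O(\varepsilon^{2+\gamma})$, so this single term already contributes $O(\varepsilon^{1+\gamma/2})$ in $H^1(\Omega_\varepsilon)$, i.e. $O(\varepsilon^{\gamma/2})$ after rescaling --- the same obstruction as above. A correct write-up must therefore either track these node-region contributions explicitly and show how they are compensated, or restrict the norm to the region where the inner terms are switched off; as it stands, the decisive estimates of your second paragraph are exactly the ones left unproven.
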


\section{Asymptotic approximations in the case $\alpha = 1$}\label{alfa=1}

For this case we will use  the ansatzes of series \eqref{regul}, \eqref{junc} and  \eqref{exp-EVl}  for the approximation of  an eigenfunction $u^\varepsilon$  and the corresponding eigenvalue $\lambda(\varepsilon)$ (the index $n$ is omitted)  of  the problem~\eqref{1.1}.
Recall that, as in Section~\ref{Sec_4}, we assume that $h_i \equiv h_i (0), \ i=1, 2, 3.$ Therefore, all coefficients $\{u_k\}_{k=2}^\infty$ in the regular part \eqref{regul}  vanish.

Substituting \eqref{regul} and  \eqref{exp-EVl}   into the differential equation and boundary conditions of the problem~\eqref{1.1}, collecting the coefficients at the same power of $\varepsilon$,  we get the following relations:
\begin{eqnarray}
    -   \dfrac{d^2 w_{k }^{(i)}(x_i)}{d{x_i}^2} - \mu_{0}  \,  w_{k} ^{(i)} (x_i) &=&   \sum_{m=1}^{k}
\mu_{m}  \,  w_{k-m} ^{(i)} (x_i) , \quad x_i\in I_\varepsilon^{(i)}, \label{w_probl_k}
\\
w_{k}^{(i)}(\ell_i)&=&0, \quad i=1, 2, 3; \label{BC_w_1_k}
\end{eqnarray}
doing the same with \eqref{junc} and  \eqref{exp-EVl}, we obtain
\begin{equation}\label{junc_probl_k_1}
 \left\{\begin{array}{rcll}
  -\Delta_{\xi}{N_k}(\xi) & = & \sum_{m=0}^{k-2} \mu_m \, N_{k-2-m}(\xi),      &
   \quad \xi\in\Xi^{(i)}, \quad i=1,2,3,
\\[2mm]
  -\Delta_{\xi}{N_k}(\xi) & = &  \varrho_0(\xi) \sum_{m=0}^{k-1} \mu_m \, N_{k-1-m}(\xi),      &
   \quad \xi\in\Xi^{(0)},
\\[2mm]
   \partial_{{\boldsymbol \nu}_\xi}{N_k}(\xi) & = &
   0,                               &
   \quad \xi\in \partial \Xi
\\[2mm]
   N_k(\xi)                                                               & \sim &
   w^{(i)}_{k}(0) + \Psi^{(i)}_{k}(\xi),                                    &
   \quad \xi_i \to +\infty, \ \ \xi  \in \Xi^{(i)}, \quad i=1,2,3,
  \end{array}\right.
\end{equation}
where $k\in \Bbb N_0,$ $\{\Psi^{(i)}_{k}\}$ are defined in \eqref{Psi_k}.

As in Subsection~\ref{sub_3*4}, we deduce  that $N_0 \equiv  \widetilde{N}_0 \equiv w_0^{(1)} (0)$
and the first transmission condition \eqref{trans0} for $\{w_0^{(i)}\}_{i=1}^3.$
Writing down the solvability condition for the corresponding problem for $\widetilde{N}_{k},$ similarly as in Subsection~\ref{inner_asymp} we get
\begin{equation}\label{transmisiont1+1}
    \sum\limits_{i=1}^3  h_i^2 (0) \frac{d w_{k-1}^{(i)}}{dx_i} (0)
 =  \boldsymbol{{d}^\star_{k-1}},
\end{equation}
where
\begin{equation}\label{const_d_*+1}
\boldsymbol{d_k^\star}
 =  \boldsymbol{\widehat{d}_k}
    \, - \, \mu_k \, \frac{ w_0^{(1)}(0)}{\pi} \int_{\Xi^{(0)}} \varrho_0(\xi)  \, d\xi,
\quad k\in\Bbb N;
\end{equation}
\begin{multline*}
\boldsymbol{\widehat{d}_k}
= \sum\limits_{i=1}^3 h_i^2 (0) \, \sum\limits_{j=1}^k \frac{\ell_0^j}{j!} \sum_{m=0}^{k-j } \mu_m \,
    \dfrac{d^{j-1} w_{k-j-m}^{(i)}}{d x_i^{j-1}} (0)
\\
-  \frac{1}{\pi}  \sum_{m=0}^{k-1} \mu_m \, \sum\limits_{i=1}^3 \int_{\Xi^{(i)}}\Big( N_{k-1-m}(\xi) - G^{(i)}_{k-1-m}(\xi_i)\Big)  \,  d\xi
     -  \frac{1}{\pi} \int_{\Xi^{(0)}} \varrho_0(\xi) \sum_{m=0}^{k-1} \mu_m \, N_{k-m}(\xi)  \, d\xi.
\end{multline*}

Thus, in this case the limit spectral problem looks as follows:
\begin{equation}\label{limitSpeProb+1}
 \left\{\begin{array}{rclr}
  -  \dfrac{d^2 w_0^{(i)}}{d{x_i^2}}(x_i) & = &
    \mu_0\, \, w_0^{(i)}(x_i), &                                x_i\in I_i, \quad i=1,2,3,
 \\[3mm]
    w_0^{(i)}(\ell_i) & = &
    0, &           i=1,2,3,
 \\[2mm]
    w_0^{(1)} (0) \ \, = \ \, w_0^{(2)} (0) & = &
    w_0^{(3)} (0), &
  \\[2mm]
    \sum_{i=1}^3  h_i^2 (0) \dfrac{dw_{0}^{(i)}}{dx_i} (0) & = &
    -    \, \displaystyle{ \frac{\mu_0 \, w^{(1)}_0(0)}{\pi} \, \int_{\Xi^{(0)}} \varrho_0(\xi) \, d\xi} . &
 \end{array}\right.
\end{equation}
Now we see a spectral parameter $\mu_0$ both in the differential equations, and in the second  Kirchhoff condition of the problem \eqref{limitSpeProb+1}.

\begin{definition}\label{def5_1}
A function $\widetilde{w}\in \mathcal{ H}_0 \setminus \{0\}$  is called  an eigenfunction  corresponding to the eigenvalue
$\mu$ of the  problem \eqref{limitSpeProb+1} if
\begin{equation}\label{limit_identity+1}
\big\langle \widetilde{w}, \widetilde{\phi} \big\rangle_0 = \mu
\left( \big( \widetilde{w}, \widetilde{\phi} \big)_{\mathcal{ V}_0} +  \displaystyle{ \frac{w^{(1)}(0) \, \phi^{(1)}(0)}{\pi} \, \int_{\Xi^{(0)}} \varrho_0(\xi) \, d\xi} \right)
\qquad \forall \, \widetilde{\phi}  \in \mathcal{ H}_0,
  \end{equation}
where the scalar products $\langle \cdot, \cdot \rangle_0, \ \left( \cdot, \cdot \right)_{\mathcal{ V}_0}$ in the spaces $\mathcal{ H}_0, \
\mathcal{ V}_0,$ respectively, are defined in Subsection~\ref{sub_3*4}.
\end{definition}

Let us define an operator $ A_1: \mathcal{H}_{0}\mapsto
\mathcal{H}_{0}$  by the  equality
\begin{equation}\label{limit-operator2}
\big\langle A_{1} \widetilde{u} , \widetilde{v}\big\rangle_0 =
\left( \big( \widetilde{u}, \widetilde{v} \big)_{\mathcal{ V}_0} +  \displaystyle{ \frac{u^{(1)}(0) \, v^{(1)}(0)}{\pi} \, \int_{\Xi^{(0)}} \varrho_0(\xi) \, d\xi} \right)
 \quad \forall \ \widetilde{u}, \widetilde{v} \in \mathcal{ H}_{0}.
\end{equation}
It is easy to verify (see e.g. \cite{Oleinik_1988}),  that  the operator $A_1$ is self-adjoint, positive, and compact.
Then the problem (\ref{limitSpeProb+1}) is equivalent to the spectral problem
\ $A_{1} \widetilde{w} = \mu^{-1} \,\widetilde{w}$ \  in $\mathcal{ H}_{0}.$
Similarly as in Subsection~\ref{sub_3*4} we prove that each eigenvalue of the problem \eqref{limitSpeProb+1} is simple.
Therefore, the eigenvalues of the problem (\ref{limitSpeProb+1})  form the sequence
\begin{equation}\label{EV_lim_prob_alfa=1}
0  <  \Lambda_1   < \Lambda_2 < \ldots < \Lambda_n  < \dots \to + \infty \quad {\rm as}
\quad   n \to +\infty,
\end{equation}
and the corresponding eigenfunctions $ \{\widetilde{W}_n \}_{n \in \Bbb N} \subset  \mathcal{ H}_{0}$ can be uniquely
ortho\-nor\-ma\-liz\-ed with relations \eqref{EF_lim_prob} and \eqref{EF_addion_lim_prob}.

Let $\mu_0$ be an eigenvalue of the problem \eqref{limitSpeProb+1}, and $\widetilde{w}_0=\{w_0^{(i)}\}_{i=1}^3$ is the corresponding eigenfunction  satisfying  conditions \eqref{EF_lim_prob} and \eqref{EF_addion_lim_prob}.
Then, there exists a unique  weak solution $\widehat{N}_1$ to the problem
\begin{equation}\label{new_junc_probl_alfa=1}
 \left\{\begin{array}{rcll}
  -\Delta_{\xi}\widetilde{N}_{1}(\xi) & = & 0,   &  \quad \xi\in\Xi^{(i)}, \quad i=1,2,3,
 \\[2mm]
-\Delta_{\xi}{\widetilde{N}_{1}}(\xi) & = & \varrho_0(\xi)  \mu_0 \, w_0^{(1)}(0),   &  \quad \xi\in\Xi^{(0)},
\\[2mm]
   \partial_{\nu_\xi}\widetilde{N}_{1}(\xi) & = &
   0,                               &
   \quad \xi\in \partial \Xi,
\\[2mm]
  \widetilde{N}_{1}(\xi)& \rightarrow  & w^{(i)}_{1}(0),   \quad \xi_i \to +\infty, \ \ {\xi}_i \in \Xi^{(i)}, \quad i=1,2,3,
 \end{array}\right.
\end{equation}
with the asymptotics \eqref{inner_asympt_general},  but now
\begin{equation}\label{delta_1+1}
\boldsymbol{\widehat{\delta}_{1}^{(i)}}:=
  \sum\limits_{j=1}^3 \dfrac{d w_0^{(j)}}{dx_j}(0) \int_{\Xi^{(j)}} \mathfrak{N}_i(\xi)
        \Big( \xi_j \chi_j^{\prime\prime}(\xi_j) + 2 \chi_j^{\prime}(\xi_j) \Big) \, d\xi + \mu_0 \, w_0^{(1)}(0) \int_{\Xi^{(0)}}\varrho_0(\xi)  \, d\xi, \quad i=2,3;
\end{equation}
please compare with $ \boldsymbol{{\delta}_{1}^{(i)}}$ in \eqref{delta_1}  for $\alpha=0.$

As follows from Subsection~\ref{sub_3*4}, to satisfy relations
\begin{equation}\label{relations_infinity}
   N_{1}(\xi)\sim  w^{(i)}_{1}(0) + \xi_i \dfrac{dw_{0}^{(i)}}{dx}(0),   \quad \xi_i \to +\infty, \ \ {\xi}_i \in \Xi^{(i)}, \quad i=1,2,3,
\end{equation}
in the problem \eqref{junc_probl_k_1} with $k=1,$
we should solve the problem
\begin{equation}\label{w_1_probl_1}
 \left\{\begin{array}{rclr}
-   \dfrac{d^2 w_{1 }^{(i)}(x_i)}{d{x_i}^2} - \mu_{0}  \,  w_{1} ^{(i)} (x_i) &=&  \mu_{1}  \,  w_{0} ^{(i)} (x_i),
 &   x_i\in I_i, \  i=1,2,3,
 \\[3mm]
    w_{1}^{(i)}(\ell_i) & = &
    0, \quad i=1,2,3, &
 \\[3mm]
    w_{1}^{(1)} (0) \ \, = \ \, w_{1}^{(2)} (0) - \boldsymbol{\widehat{\delta}_{1}^{(2)}} & = &
                                   w_{1}^{(3)} (0) - \boldsymbol{\widehat{\delta}_{1}^{(3)}}, &
 \\[3mm]
    \sum\limits_{i=1}^3 h_i^2 (0) \,\dfrac{d w_{1}^{(i)}}{dx_i} (0) & = &
   \boldsymbol{d_{1}^\star},&
 \end{array}\right.
\end{equation}
 where
\begin{equation}\label{const_d_*1+1}
\boldsymbol{d_1^\star} = \mu_0 \, w^{(1)}_0(0) \sum\limits_{i=1}^3 \ell_0 \, h_i^2 (0)
-  \frac{\mu_0}{\pi} \int_{\Xi^{(0)}} \varrho_0(\xi) \, N_1(\xi)\, d\xi -  \frac{\mu_1}{\pi}\, w^{(1)}_0(0) \int_{\Xi^{(0)}} \varrho_0(\xi) \, d\xi;
\end{equation}
please compare with $\boldsymbol{d_1^*}$ in \eqref{const_d_*1}  for $\alpha=0.$

With the help of the substitutions \eqref{sunstitutions},  the problem \eqref{w_1_probl_1} is  reduced to the corresponding problem~
\eqref{tilde-omega_probl*}, and in the same way as in Subsection~\ref{sub_3*4} we define $\mu_1$ from \eqref{Fredholm}, namely,
\begin{equation}\label{mu_1+1}
 \mu_{1}   =  \left(\mu_0 \, w_{0}^{(1)}(0) \right)^2   \sum\limits_{i=1}^3  \ell_0 h_i^2 (0)
     -  \frac{\mu_0^2\, w_{0}^{(1)}(0) }{\pi} \int\limits_{\Xi^{(0)}} \varrho_0(\xi) N_1(\xi) \, d\xi
 + \left(\dfrac{\boldsymbol{\widehat{\delta}_{1}^{(2)}} h^2_2(0)}{\ell_2}
 + \dfrac{ \boldsymbol{\widehat{\delta}_{1}^{(3)}} h^2_3(0)}{\ell_3}\right) \,   \mu_0 w_{0}^{(1)}(0).
\end{equation}
To compare with $\mu_1$ in \eqref{mu_1}  for $\alpha=0,$ we re-write \eqref{mu_1+1} in the following way
\begin{multline}\label{mu_1+1_re}
 \mu_{1}   =  \left(\mu_0 \, w_{0}^{(1)}(0) \right)^2  \left( \sum\limits_{i=1}^3  \ell_0 h_i^2 (0)
+ \Big(\frac{h^2_2(0)}{\ell_2} + \frac{h^2_3(0)}{\ell_3}\Big) \int\limits_{\Xi^{(0)}} \varrho_0(\xi)\, d\xi
\right)
     -  \frac{\mu_0^2\, w_{0}^{(1)}(0) }{\pi} \int\limits_{\Xi^{(0)}} \varrho_0(\xi) N_1(\xi) \, d\xi
\\
+ \left(\dfrac{\boldsymbol{{\delta}_{1}^{(2)}} h^2_2(0)}{\ell_2}
 + \dfrac{ \boldsymbol{{\delta}_{1}^{(3)}} h^2_3(0)}{\ell_3}\right) \,   \mu_0 w_{0}^{(1)}(0),
\end{multline}
where $\boldsymbol{{\delta}_{1}^{(2)}}$ and $\boldsymbol{{\delta}_{1}^{(3)}}$ are defined in \eqref{delta_1}.

 Thus, there exists a unique weak solution to the corresponding problem (\ref{tilde-omega_probl*}) with $k=1$
such that
\begin{equation}\label{ortho_solution}
  \langle \widetilde{\phi}_{1}, \widetilde{w}_0 \rangle_0 =0.
\end{equation}
Then, with the help of \eqref{sunstitutions} we find the
unique solution $\widetilde{w}_1=\{w_1^{(i)}\}_{i=1}^3$ to the problem \eqref{w_1_probl_1}. In addition,
we uniquely determine the coefficient $N_1.$

To find the coefficients $\widetilde{w}_k=\{w_k^{(i)}\}_{i=1}^3$ and $\mu_k$ for any $k\in \Bbb N, \ k\geq 2,$ we should consider
a problem consisting with the differential equations \eqref{w_probl_k}, the boundary conditions \eqref{BC_w_1_k}, and
the Kirchhoff transmission conditions
\begin{gather}\label{Kir_cond_k_1}
   w_{k}^{(1)} (0)  =  w_{k}^{(2)} (0) - \boldsymbol{\widehat{\delta}_{k}^{(2)}}  =  w_{k}^{(3)} (0) - \boldsymbol{\widehat{\delta}_{k}^{(3)}}, \\
    \sum\limits_{i=1}^3 h_i^2 (0) \,\dfrac{d w_{k}^{(i)}}{dx_i} (0)  =   \boldsymbol{\widehat{d}_k}
    \, - \, \mu_k \, \frac{ w_0^{(1)}(0)}{\pi} \int_{\Xi^{(0)}} \varrho_0(\xi)  \, d\xi. \label{Kir_cond_k_2}
 \end{gather}
Recall that the solution $\widehat{N}_k$ with the asymptotics
\begin{equation}\label{inner_asympt_1}
\widehat{N}_k(\xi)=\left\{
\begin{array}{rl}
    \mathcal{O}(\exp( - \gamma_1 \xi_1)) & \mbox{as} \ \ \xi_1\to+\infty, \ \ \xi  \in \Xi^{(1)},
\\[2mm]
    \boldsymbol{\widehat{\delta}_k^{(2)}}
 +  \mathcal{ O}(\exp( - \gamma_2 \xi_2)) & \mbox{as} \ \ \xi_2\to+\infty, \ \ \xi  \in \Xi^{(2)},
\\[2mm]
   \boldsymbol{\widehat{\delta}_k^{(3)}}
 +  \mathcal{ O}(\exp( - \gamma_3 \xi_3)) & \mbox{as} \ \ \xi_3\to+\infty, \ \ \xi  \in \Xi^{(3)},
\end{array}
\right.
\end{equation}
is uniquely determined, since the solvability condition for the corresponding problem for $\widetilde{N}_k$ is
the second Kirchhoff condition for $\widetilde{w}_{k-1}.$

As in Subsection~\ref{sub_3*4} we reduce the problem \eqref{w_probl_k}, \eqref{BC_w_1_k}, \eqref{Kir_cond_k_1}, \eqref{Kir_cond_k_2} to the corresponding problem
\eqref{tilde-omega_probl*}, and from its solvability condition  we define
\begin{equation}\label{Fredholm+1}
\mu_k= \mu_0  \sum_{i=1}^3 \int_0^{\ell_i} \Phi_k^{(i)}(x_i; \mu_1,\ldots,\mu_{k-1}) \, w_{0}^{(i)}(x_i) \, dx_i
+ \mu_0  w_{0}^{(1)}(0) \left( \boldsymbol{\widehat{d}_k} +
+ \frac{\boldsymbol{\widehat{\delta}_k^{(2)}} h^2_2(0)}{\ell_2}
 + \frac{\boldsymbol{\widehat{\delta}_k^{(3)}} h^2_3(0)}{\ell_3} \right),
\end{equation}
where
\begin{gather*}
{\Phi}_k^{(1)}(x_1; \mu_1,\ldots,\mu_{k-1}) =  \displaystyle{\sum_{m=1}^{k-1}} \mu_m \, h_1^2(0) \,  w_{k-m}^{(1)} (x_1),
 \\
  {\Phi}_k^{(i)}(x_i; \mu_1,\ldots,\mu_{k-1}) = \displaystyle{\sum_{m=1}^{k-1}} \mu_m \, h_i^2(0) \,  w_{k-m}^{(i)} (x_i)  +  \mu_0 \, h_i^2(0) \, \boldsymbol{\delta_k^{(i)}} \, \frac{\ell_i - x_i}{\ell_i}, \ \ i=2, 3.
 \end{gather*}
Then $\widetilde{w}_{k}$ is uniquely determined, as and $\widetilde{N}_k= w_{k}^{(1)} (0) +  \widehat{N}_k$ and $N_k$ (see \eqref{new-solution}).

\subsection{Justification  $(\alpha=1)$}\label{subsec_just+alfa=1}

With the help of the cut-off functions \eqref{cut-off-functions} we construct the series
\begin{equation}\label{asymp_expansion_alfa=1}
    \sum\limits_{k=0}^{+\infty} \varepsilon^{k}
    \Big(
    \overline{u  }_{k} (x; \, \varepsilon, \, \gamma)
  + \overline{N  }_{k} (x; \, \varepsilon, \, \gamma)
    \Big),
\quad x\in\Omega_\varepsilon,
\end{equation}
where
$$
\overline{u}_{k} (x; \, \varepsilon, \, \gamma)
 := \sum\limits_{i=1}^3 \chi_{\ell_0}^{(i)} \left(\frac{x_i}{\varepsilon^\gamma}\right)
     w_{k}^{(i)} (x_i), \quad
\overline{N}_{k} (x; \, \varepsilon, \, \gamma)
 := \left(1 - \sum\limits_{i=1}^3 \chi_{\ell_0}^{(i)} \left(\frac{x_i}{\varepsilon^\gamma}\right) \right)
    N_{k} \left( \frac{x}{\varepsilon} \right),
$$
$\gamma$ is a fixed number from the interval $(\frac23, 1),$

Denote by
\begin{equation}\label{aaN+alpha=1}
\mathfrak{U}^{(M)}(x;\varepsilon)
 :=  \sum\limits_{k=0}^{M} \varepsilon^k
    \Big(
    \overline{u  }_k(x; \, \varepsilon, \, \gamma)
  + \overline{N}_k(x; \, \varepsilon, \, \gamma)
    \Big),
\quad x\in\Omega_\varepsilon,
\end{equation}
the partial sum of \eqref{asymp_expansion_alfa=1}, where $M\in \Bbb N, \, M \ge 3,$
and by
\begin{equation}\label{part_EV_alfa=1}
 \mathcal{L}^{(M)}(\varepsilon) :=  \sum\limits_{k=0}^{M }\varepsilon^k \mu_k
\end{equation}
 the partial sum of \eqref{exp-EVl}.  Obviously, $ \mathfrak{U}^{(M)}  \in \mathcal{H}_\varepsilon.$

We substitute $\mathfrak{U}^{(M)}$ and $\mathcal{L}^{(M)}$  into the problem~(\ref{1.1}) instead  of $u^\varepsilon$ and $\lambda(\varepsilon),$ respectively,  and similarly as in Subsection~\ref{subsec_just+alpfa}  find that
\begin{equation}\label{just4+alfa=1}
\left\| A_\varepsilon\left( \frac{\mathfrak{U}^{(M)}}{\|\mathfrak{U}^{(M)}\|_\varepsilon}\right)  - \left(\mathcal{L}^{(M)}(\varepsilon)\right)^{-1} \, \frac{\mathfrak{U}^{(M)}}{\|\mathfrak{U}^{(M)}\|_\varepsilon}  \right\|_\varepsilon
 \le C_M \left(  \varepsilon^{M-1} + \varepsilon^{\gamma M + \frac{\gamma}{2}} \right),
\end{equation}
where the constant $C_M$ is independent of $\varepsilon.$

Further, in order to apply the justification scheme from Subsection~\ref{subsec_just+alpfa}, we need the following lemmas.

\begin{lemma}\label{lemma_5_1} For each function $u \in \mathcal{H}_\varepsilon$ the following inequality holds for
sufficiently small values of the parameter $\varepsilon:$
\begin{equation}\label{estimate+1}
\left|\varepsilon^{-3} \int_{\Omega^{(0)}_\varepsilon} \varrho_0\big(\frac{x}{\varepsilon}\big) \, u(x) \,dx  -
\varepsilon^{-3} \langle \varrho_0 \rangle \,\int_{\Omega^{(0)}_\varepsilon} u(x) \,dx \right| \leq C\, \varepsilon^{-\frac{1}{2}} \, \|u\|_{\varepsilon},
\end{equation}
where the constant $C$ is independent of $u$ and $\varepsilon,$ \
$$
\langle \varrho_0 \rangle_0 := \frac{1}{|\Xi^{(0)}|} \int_{\Xi^{(0)}} \varrho_0(\xi)\, d\xi
$$
is the middle value of $\varrho_0$ over $\Xi^{(0)},$ $|\Xi^{(0)}|$  is the Lebesgue measure of the domain $\Xi^{(0)}.$
\end{lemma}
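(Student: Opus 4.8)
The plan is to exploit the fact that $\varrho_0 - \langle\varrho_0\rangle$ has zero mean over $\Xi^{(0)}$, which lets us replace $u$ by its oscillation and then invoke a Poincaré--Wirtinger inequality on the \emph{fixed} domain $\Xi^{(0)}$. First I would rewrite the left-hand side as a single integral,
$$
\varepsilon^{-3} \int_{\Omega^{(0)}_\varepsilon} \Big(\varrho_0\big(\tfrac{x}{\varepsilon}\big) - \langle \varrho_0 \rangle\Big) u(x) \,dx,
$$
and pass to the stretched variable $\xi = x/\varepsilon$. Since $\Omega^{(0)}_\varepsilon = \varepsilon\,\Xi^{(0)}$ and $dx = \varepsilon^3\,d\xi$, the factor $\varepsilon^{-3}$ is absorbed and the expression becomes
$$
\int_{\Xi^{(0)}} \big(\varrho_0(\xi) - \langle \varrho_0 \rangle\big)\, u(\varepsilon\xi) \,d\xi.
$$

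Next I would use the defining property $\int_{\Xi^{(0)}}\big(\varrho_0(\xi) - \langle\varrho_0\rangle\big)\,d\xi = 0$ to subtract the mean value $\langle u(\varepsilon\cdot)\rangle := |\Xi^{(0)}|^{-1}\int_{\Xi^{(0)}} u(\varepsilon\eta)\,d\eta$ of $u(\varepsilon\cdot)$ at no cost, and then apply the Cauchy--Schwarz inequality:
$$
\left|\int_{\Xi^{(0)}} \big(\varrho_0(\xi) - \langle \varrho_0 \rangle\big)\big(u(\varepsilon\xi) - \langle u(\varepsilon\cdot)\rangle\big) \,d\xi\right|
\le \|\varrho_0 - \langle\varrho_0\rangle\|_{L^2(\Xi^{(0)})}\;\big\|u(\varepsilon\cdot) - \langle u(\varepsilon\cdot)\rangle\big\|_{L^2(\Xi^{(0)})}.
$$
The first factor is a constant depending only on $\varrho_0$. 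For the second factor I would invoke the Poincaré--Wirtinger inequality on the bounded (connected Lipschitz) domain $\Xi^{(0)}$, whose constant $C_P$ depends only on $\Xi^{(0)}$, obtaining $\big\|u(\varepsilon\cdot) - \langle u(\varepsilon\cdot)\rangle\big\|_{L^2(\Xi^{(0)})} \le C_P \,\|\nabla_\xi\big(u(\varepsilon\cdot)\big)\|_{L^2(\Xi^{(0)})}$.

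The decisive step is then the scaling of the gradient: by the chain rule $\nabla_\xi\big(u(\varepsilon\xi)\big) = \varepsilon\,(\nabla_x u)(\varepsilon\xi)$, so that
$$
\|\nabla_\xi\big(u(\varepsilon\cdot)\big)\|^2_{L^2(\Xi^{(0)})}
= \varepsilon^2 \int_{\Xi^{(0)}} |(\nabla_x u)(\varepsilon\xi)|^2\,d\xi
= \varepsilon^{-1} \int_{\Omega^{(0)}_\varepsilon} |\nabla_x u(x)|^2\,dx
\le \varepsilon^{-1}\,\|u\|_\varepsilon^2,
$$
where I changed variables back and used $\int_{\Omega^{(0)}_\varepsilon}|\nabla u|^2\,dx \le \int_{\Omega_\varepsilon}|\nabla u|^2\,dx = \|u\|_\varepsilon^2$ from \eqref{1.4}. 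Combining the three estimates yields the bound with $C = C_P\,\|\varrho_0 - \langle\varrho_0\rangle\|_{L^2(\Xi^{(0)})}$, which is independent of both $u$ and $\varepsilon$. There is no genuine obstacle here; the only point requiring care is the bookkeeping of the powers of $\varepsilon$ through the two changes of variables, where the single surviving factor $\varepsilon^{-1}$ from the gradient rescaling produces precisely the $\varepsilon^{-1/2}$ in the claimed estimate. (One should also keep in mind that the Poincaré--Wirtinger constant is $\varepsilon$-independent exactly because the inequality is applied on the fixed reference domain $\Xi^{(0)}$ rather than on $\Omega^{(0)}_\varepsilon$.)
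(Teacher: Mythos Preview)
Your argument is correct and the $\varepsilon$-bookkeeping is clean: the only surviving power is the $\varepsilon^{-1}$ from rescaling the gradient, which gives exactly the claimed $\varepsilon^{-1/2}$.

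The paper takes a different but equivalent route. Instead of subtracting the mean of $u$ and applying the Poincar\'e--Wirtinger inequality directly, it introduces the auxiliary solution $T$ of the Neumann problem
\[
-\Delta_\xi T = \varrho_0 - \langle\varrho_0\rangle_0 \quad\text{in }\Xi^{(0)},\qquad \partial_{\nu_\xi} T = 0 \quad\text{on }\partial\Xi^{(0)},\qquad \langle T\rangle_0 = 0,
\]
rewrites it in $x$-variables, multiplies by $u$ and integrates by parts to obtain
\[
\int_{\Omega^{(0)}_\varepsilon}\Big(\varrho_0\big(\tfrac{x}{\varepsilon}\big)-\langle\varrho_0\rangle_0\Big)u(x)\,dx
= \varepsilon\int_{\Omega^{(0)}_\varepsilon}\nabla_\xi T\big|_{\xi=x/\varepsilon}\cdot\nabla_x u\,dx,
\]
and then applies Cauchy--Schwarz together with the fixed bound $\|\nabla_\xi T\|_{L^2(\Xi^{(0)})}\le C$. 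The two approaches are dual to each other: the solvability of the zero-mean Neumann problem and the Poincar\'e--Wirtinger inequality are essentially the same fact. Your version is slightly more elementary (no auxiliary PDE needed), while the paper's ``corrector'' formulation is the standard device in homogenization and generalizes more readily when one wants to replace $\varrho_0$ by a less regular right-hand side or to obtain representations rather than mere bounds.
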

\begin{proof} We use the approach of Lemma 2.1 in \cite{Mel_Pop_2012}. Obviously, the Neumann problem
\begin{equation}\label{T_problem}
 \left\{\begin{array}{rcll}
  -\Delta_{\xi}{T}(\xi) & = & \varrho_0(\xi) - \langle \varrho_0 \rangle,  &
   \quad \xi\in\Xi^{(0)},
\\[2mm]
   \partial_{{\boldsymbol \nu}_\xi}{T}(\xi) & = &
   0,                               &  \quad \xi\in \partial\Xi^{(0)},
\\[2mm]
\langle T \rangle_0 &=& 0, &
 \end{array}\right.
\end{equation}
has the unique weak solution $T \in H^1(\Xi^{(0)})$ and
\begin{equation}\label{est_T}
  \int_{\Omega^{(0)}_{\varepsilon}} |\nabla_\xi T(\xi)\big|_{\xi=\frac{x}{\varepsilon}} \, dx \le C_1 \, \varepsilon^3.
\end{equation}

We can express the relations in \eqref{T_problem}
in terms of the $x$-variables:
 \begin{equation}\label{T_problem_x}
 \left\{\begin{array}{rcll}
  - \varepsilon \sum_{i=1}^3 \frac{\partial}{\partial x_i} \left( \frac{\partial T}{\partial\xi_i} \big|_{\xi= \frac{x}{\varepsilon}}  \right) & = & \varrho_0\big(\frac{x}{\varepsilon}\big) - \langle \varrho_0 \rangle_0,  &
   \quad x \in \Omega^{(0)}_{\varepsilon},
\\[2mm]
\sum_{i=1}^3 \frac{\partial T}{\partial\xi_i} \big|_{\xi= \frac{x}{\varepsilon}} \, \nu_i \big(\frac{x}{\varepsilon}\big)  & = &
   0,                               &  \quad x \in \partial\Omega^{(0)}_{\varepsilon}.
 \end{array}\right.
\end{equation}
Multiplying the first equation in \eqref{T_problem_x} by an arbitrary function $u \in \mathcal{H}_\varepsilon$
and integrating by parts, we get
$$
\varepsilon \int_{\Omega^{(0)}_{\varepsilon}} \nabla_\xi T(\xi)\big|_{\xi=\frac{x}{\varepsilon}} \cdot \nabla_x u \, dx = \int_{\Omega^{(0)}_{\varepsilon}} \Big(\varrho_0\big(\frac{x}{\varepsilon}\big) - \langle \varrho_0 \rangle_0\Big)\, u(x)\, dx,
$$
wherefrom, in view of \eqref{est_T}, it follows  \eqref{estimate+1}.
\end{proof}

\begin{lemma}[$\alpha =1$]\label{Lemma_Convergence_alfa=1} For each $n\in \Bbb N$
\begin{equation}\label{convergence=1}
  \lambda_n(\varepsilon) \rightarrow \Lambda_n \quad \text{as}  \ \ \varepsilon \to 0,
 \end{equation}
where $\lambda_n(\varepsilon)$ is the $n$-th eigenvalue of the problem \eqref{1.1} from the sequence \eqref{1.2}, \
$\Lambda_n$ is the $n$-th eigenvalue of the limit spectral problem \eqref{limitSpeProb+1}  from the sequence  \eqref{EV_lim_prob_alfa=1}.
 \end{lemma}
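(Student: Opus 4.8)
The plan is to adapt the compactness argument of Lemma~\ref{Lemma_main_Convergence} to the case $\alpha=1$, the decisive new feature being that the node term in the integral identity no longer vanishes in the limit but instead produces the extra contribution appearing in the second Kirchhoff condition of \eqref{limitSpeProb+1}. Omitting the index $n$, I first fix a subsequence along which $\lambda(\varepsilon)\to\Lambda\neq 0$; this is legitimate since \eqref{t0.1} and \eqref{lower-est} give two-sided bounds for $\alpha\in[0,1]$. Exactly as in Lemma~\ref{Lemma_main_Convergence}, the Cauchy--Bunyakovsky--Schwarz inequality and the normalization \eqref{normalized} yield the uniform bound \eqref{EF_est_2} and the matching estimate \eqref{EF_est_3} for the averaged functions $E^{(i)}_\varepsilon u^\varepsilon$, so that along a further subsequence $E^{(i)}_\varepsilon u^\varepsilon\to v^{(i)}$ uniformly on $I_i$ and weakly in $H^1(I_i)$ as in \eqref{weak_limit}, with $\widetilde{v}=\{v^{(i)}\}_{i=1}^3\in\mathcal{H}_0$ (in particular $v^{(1)}(0)=v^{(2)}(0)=v^{(3)}(0)$). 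I then rewrite \eqref{identity} in the graph form \eqref{identity+graph}, using the same test function $\Phi_\varepsilon$ which equals the common vertex value $c_0=\phi^{(1)}(0)$ on the node.

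The heart of the matter is the passage to the limit in the node term, which for $\alpha=1$ equals $\lambda(\varepsilon)\,c_0\,\varepsilon^{-3}\int_{\Omega^{(0)}_\varepsilon}\varrho_0(\tfrac{x}{\varepsilon})\,u^\varepsilon\,dx$ and is now of order $1$ instead of $o(1)$. Here Lemma~\ref{lemma_5_1}, applied to $u=u^\varepsilon$ and combined with $\|u^\varepsilon\|_\varepsilon=\varepsilon$, replaces the weighted integral by $\langle\varrho_0\rangle_0\,\varepsilon^{-3}\int_{\Omega^{(0)}_\varepsilon}u^\varepsilon\,dx$ up to an error $\mathcal{O}(\varepsilon^{1/2})\to 0$. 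It then remains to show $\varepsilon^{-3}\int_{\Omega^{(0)}_\varepsilon}u^\varepsilon\,dx\to|\Xi^{(0)}|\,v^{(1)}(0)$. For this I would pass to the node variables $\xi=x/\varepsilon$ and observe that the rescaled function $\widehat{u}^\varepsilon(\xi):=u^\varepsilon(\varepsilon\xi)$ satisfies $\int_{\Xi^{(0)}}|\nabla_\xi\widehat{u}^\varepsilon|^2\,d\xi=\varepsilon^{-1}\int_{\Omega^{(0)}_\varepsilon}|\nabla u^\varepsilon|^2\,dx\le\varepsilon\to 0$; by the Poincar\'e inequality on the fixed Lipschitz domain $\Xi^{(0)}$ this forces $\widehat{u}^\varepsilon$ to approach a constant in $L^2(\Xi^{(0)})$, while a trace inequality on $\Xi^{(0)}$ shows that the mean of $\widehat{u}^\varepsilon$ differs from its mean over the disk $\Upsilon_i(\ell_0)$ by a quantity controlled by $\|\nabla_\xi\widehat{u}^\varepsilon\|_{L^2(\Xi^{(0)})}\to 0$. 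Since that disk mean is precisely $E^{(i)}_\varepsilon u^\varepsilon(\varepsilon\ell_0)\to v^{(i)}(0)$, the constant is identified as the common value $v^{(1)}(0)$. Substituting into \eqref{identity+graph}, dividing by $\pi$ and letting $\varepsilon\to 0$ then reproduces exactly the limit identity \eqref{limit_identity+1}, so $\widetilde{v}$ is a weak solution of \eqref{limitSpeProb+1} with eigenvalue $\Lambda$. Passing to the limit in the normalization identity \eqref{norm+} (whose node term is likewise retained and converges by the same device) gives a limit equal to $1/\Lambda\neq 0$, whence $\widetilde{v}$ is not the zero element of $\mathcal{H}_0$.

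Finally, to pin down the index I would run the diagonal procedure of the second part of Lemma~\ref{Lemma_main_Convergence}: extract a subsequence along which $\lambda_n(\varepsilon)\to\Lambda_n^*$ with $\Lambda_n^*\le\Lambda_{n+1}^*$ and $E^{(i)}_\varepsilon u_n^\varepsilon\to v_n^{(i)}$ for every $n$, each $\widetilde{v}_n$ being an eigenfunction of \eqref{limitSpeProb+1} for $\Lambda_n^*$. Passing to the limit in the cross-orthogonality relation \eqref{orthog+}, whose node term now survives and converges, by the same scaling and Poincar\'e argument applied to the products $\widehat{u}_n^\varepsilon\widehat{u}_m^\varepsilon$, to a multiple of $v_n^{(1)}(0)\,v_m^{(1)}(0)\int_{\Xi^{(0)}}\varrho_0\,d\xi$, shows that the $\widetilde{v}_n$ are mutually orthogonal in the scalar product generating $A_1$ in \eqref{limit-operator2}. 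Since the spectrum of \eqref{limitSpeProb+1} is simple and, by \eqref{just4+alfa=1} together with Lemma~\ref{Vishik}, every $\Lambda_n$ carries an eigenvalue of \eqref{1.1} in any neighbourhood, these orthogonal nonzero limit eigenfunctions must exhaust the spectrum in order, forcing $\Lambda_n^*=\Lambda_n$ for all $n$; as the same reasoning applies to every subsequence of the one chosen initially, the full convergence \eqref{convergence=1} follows. The principal obstacle, as indicated, is the rigorous identification of the non-vanishing node contribution via Lemma~\ref{lemma_5_1} and the scaled Poincar\'e/trace argument; the remainder is a transcription of the $\alpha<1$ proof.
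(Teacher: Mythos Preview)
Your proposal is correct and follows essentially the same route as the paper: both observe that the proof of Lemma~\ref{Lemma_main_Convergence} carries over verbatim except for the node term in \eqref{identity+graph}, which for $\alpha=1$ no longer vanishes but converges to the extra contribution in \eqref{limit_identity+1}. You establish this via Lemma~\ref{lemma_5_1} followed by a rescaling/Poincar\'e/trace argument on the fixed domain $\Xi^{(0)}$, which is a more explicit version of what the paper sketches as ``the integral mean-value theorem'' (the paper's citation of \eqref{just4+alfa=1} at that point appears to be a misprint for \eqref{estimate+1}, i.e.\ Lemma~\ref{lemma_5_1}, which is placed immediately before precisely for this purpose).
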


\begin{proof} We should repeat the proof of Lemma~\ref{Lemma_main_Convergence}, but only one convergence should be specified,
namely,  the limit of the last summand in the right-hand side of \eqref{identity+graph}. Using  \eqref{just4+alfa=1},
\eqref{normalized}  and the integral mean-value theorem, we have
\begin{equation}\label{limit+1}
  \varepsilon^{-3} c_0 \int_{\Omega^{(0)}_\varepsilon} \varrho_0\big(\frac{x}{\varepsilon}\big) \, u^\varepsilon(x) \,dx \longrightarrow
c_0 \, v^{(1)}(0)\, \int_{\Xi^{(0)}} \varrho_0(\xi) \, d\xi \quad \text{as} \ \ \varepsilon \to 0.
\end{equation}
The same argumentations give us
\begin{equation}\label{limit+2}
  \varepsilon^{-3}  \int_{\Omega^{(0)}_\varepsilon} \varrho_0\big(\frac{x}{\varepsilon}\big) \, \big(u^\varepsilon(x)\big)^2 \,dx \longrightarrow
 \big(v^{(1)}(0)\big)^2 \, \int_{\Xi^{(0)}} \varrho_0(\xi) \, d\xi \quad \text{as} \ \ \varepsilon \to 0.
\end{equation}
\end{proof}

Thus, the following statement holds.

\begin{theorem}[$\alpha =1$ ]
  For each $n\in \Bbb N$ the $n$-th eigenvalue $\lambda_n(\varepsilon)$ of the problem \eqref{1.1}  is  decomposed into the asymptotic series
$$
\lambda_n(\varepsilon) \approx \Lambda_n + \varepsilon \, \mu_{1,n} +
\sum\limits_{k=2}^{+\infty}\varepsilon^k \, \mu_{k, n}
 \quad \text{as} \quad \varepsilon \to 0,
$$
where $\Lambda_n$ is the $n$-th eigenvalue of the limit spectral problem
(\ref{limitSpeProb+1})
and $\mu_{1, n}$ is determined by the corresponding formula \eqref{mu_1+1_re}.

For the corresponding eigenfunction $u_n^\varepsilon$ normalized by \eqref{normalized} and \eqref{normalized+} one can construct  the approximation function $\mathfrak{U}_n^{(M)} \in \mathcal{H}_\varepsilon$ defined with \eqref{aaN+alpha=1} $(M\in \Bbb N, \ M \ge 3)$ such that the asymptotic estimate
\begin{equation}\label{just9+alfa=1}
  \left\| u_n^\varepsilon - \tau_M(\varepsilon) \,  \mathfrak{U}^{(M)}_{n} \right\|_\varepsilon \le
C_M \left(  \varepsilon^{M} + \varepsilon^{\gamma M + 1+ \frac{\gamma}{2}} \right)
\end{equation}
 holds  for any $\varepsilon$ small enough.  Here  $\tau_M(\varepsilon) = \frac{\varepsilon}{\|\mathfrak{U}^{(M)}_{n}\|_\varepsilon}$
and it has the asymptotics \eqref{ta-m}, $\gamma$ is a fixed number from the interval $(\frac23, 1).$
\end{theorem}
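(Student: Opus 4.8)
The plan is to follow verbatim the justification scheme of Subsection~\ref{Sec_justification} (the case $\alpha=0$) and of Subsection~\ref{subsec_just+alpfa}, the only genuinely new inputs being the operator estimate \eqref{just4+alfa=1} and the convergence Lemma~\ref{Lemma_Convergence_alfa=1}, both already at our disposal. First I would record that the uniform two-sided bounds \eqref{additional_est1}--\eqref{additional_est2} remain valid here: $\mathcal{L}^{(M)}(\varepsilon)\to\Lambda_n$ stays bounded away from $0$ and $\infty$, while $\|\mathfrak{U}^{(M)}_n\|_\varepsilon$ is of order $\varepsilon$ by \eqref{EF_lim_prob} together with Lemma~\ref{lemma_5_1}, which guarantees that the node contribution to the $\varepsilon$-norm is of lower order and does not spoil the leading constant, so that $\tau_M(\varepsilon)$ indeed has the asymptotics \eqref{ta-m}. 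With these bounds, \eqref{just4+alfa=1} is exactly the hypothesis of Lemma~\ref{Vishik} applied to $A_\varepsilon$ with $\mu=\big(\mathcal{L}^{(M)}(\varepsilon)\big)^{-1}$, $u=\mathfrak{U}^{(M)}_n/\|\mathfrak{U}^{(M)}_n\|_\varepsilon$, and $\beta=C_M\big(\varepsilon^{M-1}+\varepsilon^{\gamma M+\gamma/2}\big)$.

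Next I would fix the spectral gap $d_0:=\tfrac12\min\{\Lambda_n^{-1}-\Lambda_{n+1}^{-1},\,\Lambda_{n-1}^{-1}-\Lambda_n^{-1}\}$, which is strictly positive because the eigenvalues \eqref{EV_lim_prob_alfa=1} of the limit problem \eqref{limitSpeProb+1} are simple. Lemma~\ref{Vishik} then produces an eigenvalue of $A_\varepsilon$, i.e.\ some $1/\lambda_j(\varepsilon)$, within $\beta$ of $\big(\mathcal{L}^{(M)}(\varepsilon)\big)^{-1}$. The identification of the index $j$ with $n$ is the step where Lemma~\ref{Lemma_Convergence_alfa=1} is essential: since $\lambda_n(\varepsilon)\to\Lambda_n$ and the limit spectrum is simple, for $\varepsilon$ small the segment $[\mu-d_0,\mu+d_0]$ captures exactly the single eigenvalue $1/\lambda_n(\varepsilon)$. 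This yields $|\lambda_n^{-1}(\varepsilon)-(\mathcal{L}^{(M)}(\varepsilon))^{-1}|\le\beta$, and multiplying through by the bounded factor $\lambda_n(\varepsilon)\,\mathcal{L}^{(M)}(\varepsilon)$ gives $|\lambda_n(\varepsilon)-\mathcal{L}^{(M)}(\varepsilon)|\le C_M(\varepsilon^{M-1}+\varepsilon^{\gamma M+\gamma/2})$. Since $\gamma\in(\tfrac23,1)$ both exponents tend to $+\infty$ with $M$, so for any prescribed $p\in\Bbb N$ one chooses $M$ large and then replaces $\mathcal{L}^{(M)}$ by its truncation $\Lambda_n+\sum_{k=1}^{p-1}\varepsilon^k\mu_{k,n}$ at cost $\mathcal{O}(\varepsilon^p)$; this is precisely the assertion that the series is the asymptotic expansion of $\lambda_n(\varepsilon)$, with $\mu_{1,n}$ given by \eqref{mu_1+1_re}.

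For the eigenfunction I would invoke the second conclusion of Lemma~\ref{Vishik}: with the same $d_0$ there is a unit vector $\widetilde u$, a combination of eigenvectors of $A_\varepsilon$ associated with eigenvalues in $[\mu-d_0,\mu+d_0]$, such that $\|u-\widetilde u\|_\varepsilon\le 2d_0^{-1}\beta$. By the same simplicity-and-convergence argument that segment contains only $1/\lambda_n(\varepsilon)$ for small $\varepsilon$, hence $\widetilde u=u_n^\varepsilon/\|u_n^\varepsilon\|_\varepsilon=u_n^\varepsilon/\varepsilon$, the sign being fixed consistently by \eqref{normalized+} and \eqref{EF_addion_lim_prob}. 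Rescaling by $\varepsilon$ and using $\tau_M(\varepsilon)=\varepsilon/\|\mathfrak{U}^{(M)}_n\|_\varepsilon$ converts the normalized estimate into \eqref{just9+alfa=1}, the extra factor $\varepsilon$ turning $\varepsilon^{M-1}+\varepsilon^{\gamma M+\gamma/2}$ into $\varepsilon^{M}+\varepsilon^{\gamma M+1+\gamma/2}$.

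The genuinely new difficulty relative to $\alpha=0$ is not in this final assembly but in the two facts it rests upon. The residual estimate behind \eqref{just4+alfa=1} must absorb the node term $\varepsilon^{-\alpha}\varrho_0(x/\varepsilon)$ at the critical scale $\alpha=1$, where the concentrated mass enters the \emph{leading} order through the limit operator $A_1$ of \eqref{limit-operator2}; here the averaging Lemma~\ref{lemma_5_1} is what allows one to trade $\varrho_0(x/\varepsilon)$ for its mean and to control the discrepancy in $\|\cdot\|_\varepsilon$. Likewise, the passage to the limit in Lemma~\ref{Lemma_Convergence_alfa=1}, specifically the convergence \eqref{limit+1}--\eqref{limit+2} of the node integrals, is what makes the mass term survive in the limit identity \eqref{limit_identity+1} and thereby pins down which operator the eigenvalues converge to. Once those are in hand, the theorem follows by the mechanical application of Lemma~\ref{Vishik} described above.
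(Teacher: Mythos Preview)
Your proposal is correct and follows essentially the same route as the paper: assemble the operator estimate \eqref{just4+alfa=1} with the convergence Lemma~\ref{Lemma_Convergence_alfa=1}, invoke Lemma~\ref{Vishik} with the gap $d_0$ set by the simple limit spectrum \eqref{EV_lim_prob_alfa=1}, and rescale to obtain \eqref{just9+alfa=1}. One small correction: Lemma~\ref{lemma_5_1} is not needed for the norm bound $\|\mathfrak{U}^{(M)}_n\|_\varepsilon\sim\varepsilon$ or for \eqref{just4+alfa=1} (since $\|\cdot\|_\varepsilon$ is the pure gradient norm \eqref{1.4}); its role in the paper is solely inside the proof of Lemma~\ref{Lemma_Convergence_alfa=1}, to establish the limits \eqref{limit+1}--\eqref{limit+2}.
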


In this case, the analogues of  Corollaries~\ref{corollary1} and \ref{corollary2} hold with the corresponding replacements of coefficients.

\section*{Acknowledgements}
This research was begun at the University of Stuttgart under support of the Alexander
von Humboldt Foundation in the summer of 2019. The author is very grateful to
Prof. Christian Rohde for the hospitality and wonderful working conditions. 

The author is a member (and his scientific activity is partially supported by) of the research project
``Development of analytical-geometric, asymptotic and qualitative methods for investigation of invariant differential equation sets''
(number 19BF038-02) of the Taras Shevchenko National University of Kyiv.


\end{document}